\title{Ollivier Curvature Bounds for the Brownian Continuum Random Tree}
\author{Christy Kelly\footnote{\href{mailto:christy.kelly@riken.jp}{christy.kelly@riken.jp}}}
\affil{RIKEN iTHEMS, Wako, Saitama 351-0198}
\date{\printdayoff\today}
\setlist[enumerate,1]{label={(\roman *)}}
\theoremstyle{plain}
\newtheorem{proposition}{Proposition}[section]
\newtheorem{theorem}[proposition]{Theorem}
\newtheorem{lemma}[proposition]{Lemma}
\newtheorem{corollary}[proposition]{Corollary}
\newtheorem{fact}[proposition]{Fact}
\theoremstyle{definition}
\newtheorem{definition}[proposition]{Definition}
\theoremstyle{remark}
\newtheorem*{remarks}{Remark}
\renewcommand{\d}{\text{d}}
\begin{document}
	
	\maketitle
	\begin{abstract}
		We compute bounds in the expected Ollivier curvature for the Brownian continuum random tree $\mathcal{T}_{\mathbb{e}}$. The results indicate that when the scale dependence of the Ollivier curvature is properly taken into account, the Ollivier-Ricci curvature of $\mathcal{T}_{\mathbb{e}}$ is bounded above by every element of $\mathbb{R}$ for almost all points of $\mathcal{T}_{\mathbb{e}}$. This parallels the well-known result that every continuum tree is a $CAT(K)$ space for all $K\in\mathbb{R}$.
	\end{abstract}
	\section{Introduction}
    A \textit{real tree} $\mathcal{T}$ is a compact metric space that behaves like a graph-theoretic tree in terms of its path-structure: there is a unique curve without self-intersections between any two points of $\mathcal{T}$ with length equal to the distance between the two points. Despite the apparent simplicity of their structure, real trees have important topological applications: c.f. e.g. Ref. \cite{Bestvina-RealTrees}. They also have a surprisingly rich (metric) geometry. The key feature from this perspective is the fact that every triangle in a real tree is `infinitely thin'; more precisely every triangle in a continuum tree looks like some imbedding of the claw graph $K_{1,3}$. As such it is intuitively clear that real trees are in some sense infinitely hyperbolic spaces. In fact, it is well known that a geodesic space is a real tree iff it is $CAT(K)$ for all $K\in \mathbb{R}$ \cite{BridsonHaefliger-MetricSpacesNonPositiveCurvature,AndreevBerestovskii-DimensionRealTrees}. 

    From the $CAT(K)$ property one immediately sees that a real tree is simply connected, while an elementary argument shows that the topological dimension of every real tree is one; there seems to be little else to restrict the global geometry of real trees in general: explicit examples of real trees with arbitrarily large (including infinite) Hausdorff dimension can be given \cite{AndreevBerestovskii-DimensionRealTrees}. There are, however, classes of \textit{random} real trees which have slightly better global properties. Perhaps the best understood is the Brownian continuum random tree (BCRT) $\mathcal{T}_{\mathbb{e}}$ of Aldous \cite{Aldous-CRTI,Aldous-CRTII,Aldous-CRTIII,LeGall-RandomTreesApplications}, a random real tree that arises as the universal scaling limit of a variety of random discrete tree-valued processes, including especially the genealogy trees associated to (critical) branching processes \cite{Aldous-CRTI,Aldous-CRTIII,MarckertMiermont-CRTScalingLimitUnorderedBinaryTrees,Stufler-CRTScalingLimitUnlabelledUnrootedTrees,HaasMiermont-ScalingLimit,Miermont-InvariancePrincipleSpatialMultitype,BroutinMarckert-AsymptoticsTreesPrescribedDegreeSequence,CurienHaasKortchemski-CRTScalingLimitRandomDissections}. $\mathcal{T}_{\mathbb{e}}$ also plays an important role in constructions of several other probabilistic structures such as the scaling limit of the metric space of connected components of a critical Erd\H{o}s-R\'{e}nyi graph \cite{AddarioBerryBroutinGoldschmidt-ContinuumLimitCriticalRandomGraphs,AddarioBerryBroutinGoldschmidt-CriticalRandomGraphs} and the Brownian map \cite{MarckertMokkadem-LimitQuadrangulations,LeGall-UniquenessUniversalityBM,LeGall-GeodesicsLargePlanarMaps,LeGall-TopologicalStructureBM,Miermont-BMScalingLimitPlaneQuandrangulations,DuplantierMillerSheffield-LQGMatingTrees}. Closer to the author's interests, the BCRT appears as the scaling limit of tensor models \cite{ADJ-QG,GurauRyan-MelonsBranchedPolymers}, where it represents a pathological model of quantum spacetime. Note that the latter two examples indicate that the BCRT appears naturally in the context of studies of quantum gravity in physics where it is referred to as a \textit{branched polymer}.

    We can again consider the (metric-measure) geometry of the BCRT; its global geometry is well understood: the Hausdorff dimension of the $\mathcal{T}_{\mathbb{e}}$ is $2$ and its spectral dimension---a quantity controlling the dominant scaling of the heat kernel of a Brownian motion on $\mathcal{T}_{\mathbb{e}}$---is known to be $4/3$ \cite{ADJ-QG,DuquesneLeGall-ProbabilisticFractalAspectsLevyTrees,HaasMiermont-Genealogy,Kigami-HarmonicCalculusLimitsNetworks}. At a local level, Duquesne and Le Gall further obtain information about the Hausdorff measure of the BCRT (more generally of the so-called \textit{stable L\'{e}vy trees}) which include local almost sure bounds on the volume growth of balls \cite{DuquesneLeGall-HausdorffMeasure}; in particular the results of Duquesene and Le Gall establish that locally one has almost sure fluctuations of order $\log(\log(\delta^{-1}))$ around the dominant $\delta^2$ scaling. Croydon has extended the analysis of the volume growth problem in the context of the BCRT in Ref. \cite{Croydon-VolumeGrowth}, obtaining slightly more explicit information about the local volume growth bounds (improved constants, though the order of the fluctuations agrees with the result of Duquesne and Le Gall) as well as global bounds on the rate of volume growth for balls. Following the work of Kigami \cite{Kigami-HarmonicCalculusLimitsNetworks} and his own work \cite{Croydon-HeatKernelFluctuations} on heat kernel fluctuations on metric-measure spaces with a resistance form, Croydon succeeded in constructing a Brownian motion on $\mathcal{T}_{\mathbb{e}}$ and obtaining heat kernel bounds via the global bounds on the volume fluctuations. These results readily imply the dimension results listed previously.

    Another set of local quantities of potential interest are the recently introduced \textit{synthetic curvatures} of optimal transport theory c.f. Ref. \cite{Villani-OptimalTransport} for a comprehensive review of the field. The optimal transport problem is briefly concerned with the minimisation of the total cost of transforming one probability distribution into another for given cost functions; it turns out that when the cost is given by some power $p\in [1,\infty)$ of the distance in a metric space, the $p$th root of the transport cost gives the \textit{Wasserstein $p$-distance} $\mathcal{W}_p$, a metric on (a suitable restriction of) the space of Borel probability measures of the original space. For the case $p=1$, Ollivier introduced a synthetic curvature which after suitable normalisation approximates the Ricci curvature in Riemannian manifolds \cite{Ollivier-RCMCMS,Ollivier-RCMS}. The Ollivier curvature has several nice properties: it is very intuitive (spaces are positively curved because the average distance between small balls is less then the distance between their centres) and is well-defined and relatively computable for discrete systems. This has made it a popular quantity to study in network geometry c.f. e.g. Refs. \cite{Ni_RicIntTop, Sandhu_Cancer, Sandhu_Market, TannenbaumEtAl_Cancer, WangEtAl_DiffGeom, WangEtAl_Interference, WangEtAl_QUBO, WangEtAl_WirelessNetwork, WhiddenMatsen_SubtreeGraph, FarooqEtAl_Brain, SiaEtAl_CommunityDetection, JostLiu_RicciCurv,BogunaEtAl-NetworkGeometry} as well as in some physical models of quantum gravity \cite{KlitgaardLoll_HowRound,KlitgaardLoll_ImplementingQuantumRicciCurvature,KlitgaardLoll_QuantumRicciCurvature,BrunekreefLoll-CurvatureProfile,Gorard,Trugenberger_CombQG,KellyEtAl,KellyEtAl_Circle}. The Ollivier-Ricci curvature is also rather explicit being a precise \textit{characterisation} of the Ricci curvature, rather than just a bound. Despite this, it has perhaps garnered less interest in less applied sectors. Part of the issue is perhaps the relative subtlety of its stability theory (see \cite{Hoorn-Convergence,HoornEtAl_OllCurvConv,KellyTrugenbergerBiancalana-ConvergenceCombinatorialGravity}), but more importantly we should look to the richness of optimal transport theory in the $p=2$ case \cite{Ollivier-RCMCMS}. In particular, Sturm \cite{Sturm-GeomI} and Lott and Villani \cite{LottVillani-SyntheticCurvature} independently defined a curvature-dimension condition when $p=2$ that has since perhaps become the canonical example of a synthetic curvature for metric-measure spaces. In the context of continuum trees, however, the Sturm-Lott-Villani curvature-dimension condition is perhaps not appropriate: curvature-dimension conditions essentially specify \textit{lower bounds} on the Ricci curvature which clearly cannot be done when the space in question is infinitely hyperbolic.

    The main task in computing the Ollivier curvature in a metric measure space $(X,\rho_X,\mu)$ is an evaluation of the $\mathcal{W}_1$-distance between two suitable Borel probability measures in $X$, each typically taken to be the uniform probability measures induced by $\mu$ on open balls of a given radius and specified centre. Associated, then, to the Ollivier curvature are two free parameters $\delta$ and $\ell$ taking (sufficiently small) values in $(0,\infty)$; these describe the radius of the balls in question and the distance between their centres respectively. In the smooth context, the Ollivier curvature agrees with the Ricci curvature \textit{in the asymptotic regime where $\delta$ and $\ell$ are small and only after a suitable rescaling with respect to $\delta$}. Thus it is natural to consider a scale-free analogue of the Ollivier curvature by taking the limit $\delta,\:\ell\rightarrow 0$ of the naive Ollivier curvature following rescaling. A more complete discussion of these points as well as a more formal account of the Ollivier curvature is given in section \ref{section: OllivierCurvature}. 

    In section \ref{section: BCRT} we present some basic material on continuum trees in general and the BCRT in particular. Most of this material is well known, and can be skipped (up to notational peculiarities) by a reader familiar with the BCRT. A possible exception is proposition \ref{proposition: BallIntersection} which provides a nice decomposition of the intersection of two open balls in arbitrary continuum trees. 

    The main result of this paper is an explicit computation of bounds for the expected Ollivier curvature in the BCRT, as presented in corollary \ref{corollary: OllCurvBound}. This follows immediately from a computation of bounds for the $L_1$-Wasserstein distance between suitable probability measures in $\mathcal{T}_{\mathbb{e}}$ as given in theorem \ref{theorem: MainTheorem}. Essentially we find that for almost all points $x\in \mathcal{T}_{\mathbb{e}}$ the Ollivier curvature $\kappa_x(\delta,\ell)$ at $x$ at the given scales $(\delta,\ell)$ satisfies the bounds
    \begin{align}
        -a\frac{\delta}{\ell}\leq \kappa_x(\delta,\ell)\leq -b\frac{\delta}{\ell}
    \end{align}
    for suitable positive constants $a,\:b\in (0,\infty)$. Note that we have been very rough in our use of notation in the above to emphasise the key point. To obtain the scale-free curvature we need to normalise by a factor of $\delta^2$ so it is easily seen that in any limit $\delta,\:\ell\rightarrow 0$ the scale-free Ollivier curvature at $x$ blows up to negative infinity. In this sense, the scale-free Ollivier curvature and the synthetic sectional curvature are compatible in this context. This result has possible ramifications for the proper formulation of (Euclidean) quantum gravity in dimensions $D\geq 3$.
	\section{Ollivier Curvature}\label{section: OllivierCurvature}
	We briefly review the fundamental properties of the Ollivier curvature we shall require in the subsequent, beginning some elementary ideas in optimal transport theory; see \cite{Villani-OptimalTransport,Ollivier-RCMCMS,Ollivier-RCMS} for more complete introductions to the relevant material. Throughout this section we work in a complete separable metric space $(X,\rho_X)$; the open ball of radius $\varepsilon>0$ centred at a point $x\in X$ is denoted $\mathbb{B}^X_\varepsilon(x)$. Also for any measurable spaces $(\Omega_1,\Sigma_1)$ and $(\Omega_2,\Sigma_2)$, any measure $\mu$ on $(\Omega_1,\Sigma_1)$ and any measurable mapping $f:\Omega_1\rightarrow\Omega_2$, we let $f_*\mu$ denote the pushforwards of $\mu$ with respect to $f$. Also, we let $\mu(f)$ denote the integral of a measurable function $f$ with respect to the measure $\mu$.
    \begin{definition}
        Let $\mu$ and $\nu$ be Borel probability measures in $X$. 
        \begin{enumerate}
            \item A \textit{transport plan} between $\mu$ and $\nu$ is a Borel probability measure $\xi$ on $X\times X$ such that
            \begin{align}\label{equation: MarginalConstraints}
                (\pi_1)_*\xi=\mu && (\pi_2)_*\xi=\nu
            \end{align}
            where $\pi_1:(x,y)\mapsto x$ and $\pi_2:(x,y)\mapsto y$ are the natural projections onto the first and second elements respectively. We refer to the conditions \ref{equation: MarginalConstraints} as the \textit{marginal constraints} satisfied by transport plans. The set of all transport plans between $\mu$ and $\nu$ is denoted $\Pi(\mu,\nu)$.
            \item The \textit{transport cost} associated to a transport plan $\xi\in \Pi(\mu,\nu)$ is defined
            \begin{align}
                \mathcal{W}_X(\xi)\coloneqq \xi(\rho_X).
            \end{align}
            The \textit{optimal transport cost} or \textit{Wasserstein distance} is then defined
            \begin{align}
                \mathcal{W}_X(\mu,\nu)\coloneqq \inf_{\xi\in \Pi(\mu,\nu)}\mathcal{W}_X(\xi).
            \end{align}
            \item A transport plan $\xi\in \Pi(\mu,\nu)$ is said to be \textit{optimal} iff $\mathcal{W}_X(\xi)=\mathcal{W}_X(\mu,\nu)$.
        \end{enumerate}
    \end{definition}
    Roughly speaking, the idea is that we have some unit quantity of substance distributed according to $\mu$ that we wish to redistribute according to $\nu$; the problem is then to find the cheapest way to conduct this redistribution if it costs $\rho_X(x,y)$ to transport a unit of substance from $x$ to $y$. Given a transport plan $\xi\in \Pi(\mu,\nu)$ the quantity $\xi(E_1\times E_2)$ indicates the proportion of substance in $E_1$ that is to be transported to $E_2$ where $E_1$ and $E_2$ are measurable subsets of $X$.
    
    In the present setting it is fairly simple to see that the mapping $\xi\mapsto \mathcal{W}_X(\xi)$ is lower semicontinuous while an elementary application of the Prokhorov theorem suffices to show that $\Pi(\mu,\nu)$ is compact with respect to the topology of weak convergence of measure. From these considerations the existence of optimal transport plans follows. $\mathcal{W}_X$ turns out to be a metric on the space of Borel probability measures on $X$; in fact we have the following:
    \begin{fact}
        Let $\mathcal{P}(X)$ denote the space of Borel probability measures on $X$ with finite first moments, i.e. for each $\mu\in \mathcal{P}(X)$ we have
        \begin{align}
            \mu(x\mapsto \rho_X(x_0,x))<\infty
        \end{align}
        for all $x_0\in X$. Then $(\mathcal{P}(X),\mathcal{W}_X)$ is a metric space and its topology is equal to the topology generated by demanding weak convergence and convergence of first central moments. $\qed$
    \end{fact}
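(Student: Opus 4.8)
The plan is to verify the metric axioms directly and then to establish the topological equivalence using Kantorovich--Rubinstein duality together with a truncation argument.

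First I would dispatch the metric axioms. Finiteness of $\mathcal{W}_X(\mu,\nu)$ for $\mu,\nu\in\mathcal{P}(X)$ follows by testing against the product plan $\mu\otimes\nu\in\Pi(\mu,\nu)$: since $\rho_X(x,y)\leq\rho_X(x,x_0)+\rho_X(x_0,y)$ for any fixed $x_0\in X$, integrating against $\mu\otimes\nu$ bounds $\mathcal{W}_X(\mu,\nu)$ by the sum of the first moments, which are finite by hypothesis. Symmetry is immediate from the symmetry of $\rho_X$ and the involution $(x,y)\mapsto(y,x)$ on plans, and $\mathcal{W}_X(\mu,\mu)=0$ follows by using the diagonal plan $(\mathrm{id}\times\mathrm{id})_*\mu$. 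For the identity of indiscernibles I would invoke the existence of an optimal plan $\xi$, guaranteed by the lower semicontinuity and compactness already noted: if $\mathcal{W}_X(\mu,\nu)=0$ then $\xi(\rho_X)=0$, so $\xi$ is concentrated on the diagonal $\{(x,x):x\in X\}$, whence the marginal constraints force $\mu=\nu$.

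The triangle inequality is the first genuinely technical point, and I would prove it via the gluing lemma. Given $\mu,\nu,\lambda\in\mathcal{P}(X)$ and optimal plans $\xi_{12}\in\Pi(\mu,\nu)$ and $\xi_{23}\in\Pi(\nu,\lambda)$, disintegrating each plan along its common $\nu$-marginal and recombining produces a measure $\eta$ on $X\times X\times X$ whose $(1,2)$- and $(2,3)$-marginals are $\xi_{12}$ and $\xi_{23}$ respectively. The $(1,3)$-marginal of $\eta$ is then a valid element of $\Pi(\mu,\lambda)$, and applying $\rho_X(x,z)\leq\rho_X(x,y)+\rho_X(y,z)$ under the integral yields $\mathcal{W}_X(\mu,\lambda)\leq\mathcal{W}_X(\mu,\nu)+\mathcal{W}_X(\nu,\lambda)$. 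For the topological statement the central tool is the Kantorovich--Rubinstein duality
\[
\mathcal{W}_X(\mu,\nu)=\sup\{\mu(f)-\nu(f):f\text{ is }1\text{-Lipschitz}\},
\]
which I would either cite or derive from the Kantorovich duality for the cost $\rho_X$. One direction of the equivalence is then easy: if $\mathcal{W}_X(\mu_n,\mu)\to0$, then for every bounded Lipschitz $f$ duality gives $\abs{\mu_n(f)-\mu(f)}\leq\mathrm{Lip}(f)\,\mathcal{W}_X(\mu_n,\mu)\to0$, yielding weak convergence, while applying the same bound to the $1$-Lipschitz function $x\mapsto\rho_X(x_0,x)$ delivers convergence of first central moments.

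The hard part will be the converse: that weak convergence together with convergence of first moments implies $\mathcal{W}_X(\mu_n,\mu)\to0$. Here I would argue by truncation. Convergence of the first moments $\mu_n(\rho_X(x_0,\cdot))\to\mu(\rho_X(x_0,\cdot))$ together with weak convergence upgrades to uniform integrability of the functions $x\mapsto\rho_X(x_0,x)$ against the family $\{\mu_n\}$; concretely, the mass lying outside a large ball $\mathbb{B}^X_R(x_0)$, weighted by distance, can be made uniformly small in $n$ by choosing $R$ large. On the complementary bounded region the cost $\rho_X$ is bounded and weak convergence controls the transport, so splitting the Kantorovich--Rubinstein supremum into a bounded near-field contribution and a uniformly small tail contribution forces $\mathcal{W}_X(\mu_n,\mu)\to0$. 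Managing the interaction between the truncation radius and the competing $1$-Lipschitz test functions, so that both contributions are simultaneously small, is the delicate bookkeeping step on which the argument turns.
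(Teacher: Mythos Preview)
The paper does not actually prove this statement: it is recorded as a \emph{Fact} and closed with $\qed$, i.e.\ it is cited as a standard result from optimal transport theory (see e.g.\ Villani, \emph{Optimal Transport}, Theorem 6.9). There is therefore no ``paper's own proof'' against which to compare.

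Your outline is correct and is exactly the standard argument one finds in the references. The metric axioms are handled as usual (product plan for finiteness, diagonal plan for reflexivity, optimal plan supported on the diagonal for definiteness, gluing lemma for the triangle inequality), and the topological equivalence is obtained via Kantorovich--Rubinstein duality in one direction and a uniform-integrability/truncation argument in the other. The only comment is that in the ``easy'' direction you should be slightly careful: duality gives $\abs{\mu_n(f)-\mu(f)}\to 0$ for all $1$-Lipschitz $f$, bounded or not, and one then invokes the fact that integration against bounded Lipschitz functions determines weak convergence on a Polish space. Otherwise the plan is sound and nothing essential is missing.
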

    One remarkable feature of optimal transport theory is the existence of a dual formulation:
    \begin{definition}
        Let $\mu,\:\nu\in \mathcal{P}(X)$. For any $f:X\rightarrow [-\infty,\infty]$ that is integrable for at least one of $\mu$ and $\nu$ we define
        \begin{align}
            \mathcal{K}_{\mu,\nu}^X(f)\coloneqq \mu(f)-\nu(f)\nonumber.
        \end{align}
        The \textit{Kantorovitch-Rubinstein distance} between $\mu$ and $\nu$ is then defined
        \begin{align}
            \mathcal{K}_X(\mu,\nu)=\sup_{f\in \mathscr{L}(X)}\mathcal{K}_{\mu,\nu}(f)
        \end{align}
        where $\mathscr{L}(X)$ denotes the set of all $1$-Lipshitz maps $f:X\rightarrow \mathbb{R}$. 
    \end{definition}
    In the present setting the Kantorovitch duality theorem thus states:
    \begin{fact}
        $\mathcal{K}_X(\mu,\nu)=\mathcal{W}_X(\mu,\nu)$ for all $\mu,\:\nu\in \mathcal{P}(X)$. $\qed$
    \end{fact}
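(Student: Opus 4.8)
The plan is to establish the two inequalities $\mathcal{K}_X(\mu,\nu)\leq \mathcal{W}_X(\mu,\nu)$ and $\mathcal{W}_X(\mu,\nu)\leq \mathcal{K}_X(\mu,\nu)$ separately. The first (weak duality) is elementary and exploits the marginal constraints directly. Fix any transport plan $\xi\in \Pi(\mu,\nu)$ and any $f\in \mathscr{L}(X)$. Since $(\pi_1)_*\xi=\mu$ and $(\pi_2)_*\xi=\nu$, the change-of-variables formula for pushforwards gives
\begin{align}
    \mu(f)-\nu(f)=\xi(f\circ \pi_1)-\xi(f\circ\pi_2)=\xi\big((x,y)\mapsto f(x)-f(y)\big)\leq \xi(\rho_X)=\mathcal{W}_X(\xi),
\end{align}
where the inequality uses the pointwise $1$-Lipschitz bound $f(x)-f(y)\leq \rho_X(x,y)$. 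Taking the supremum over $f\in \mathscr{L}(X)$ and then the infimum over $\xi\in\Pi(\mu,\nu)$ yields $\mathcal{K}_X(\mu,\nu)\leq \mathcal{W}_X(\mu,\nu)$.

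The reverse inequality is the substantive direction, and I would obtain it from the general Kantorovitch duality theorem. The idea is to regard the primal problem $\inf_{\xi}\xi(\rho_X)$ as the minimisation of a linear functional over the weakly compact convex set $\Pi(\mu,\nu)$ and to dualise the linear marginal constraints. A Fenchel--Rockafellar duality argument in the pairing between the bounded continuous functions $C_b(X)$ and the finite Borel measures produces the dual value
\begin{align}
    \mathcal{W}_X(\mu,\nu)=\sup\Big\{\mu(\varphi)+\nu(\psi):\varphi,\psi\in C_b(X),\ \varphi(x)+\psi(y)\leq \rho_X(x,y)\ \forall x,y\in X\Big\}.
\end{align}
The crucial analytic input is the absence of a duality gap, which rests on the lower semicontinuity of $\xi\mapsto\mathcal{W}_X(\xi)$ and the tightness of $\Pi(\mu,\nu)$ furnished by Prokhorov's theorem, both already noted in the discussion preceding the statement.

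To pass from this general dual to the Kantorovitch--Rubinstein form over $\mathscr{L}(X)$, I would exploit the fact that here the cost is the metric itself. Given an admissible pair $(\varphi,\psi)$, replacing $\psi$ by the $c$-transform $\varphi^c(y)\coloneqq\inf_{x\in X}\big(\rho_X(x,y)-\varphi(x)\big)$ can only increase the objective, since $\psi\leq\varphi^c$ while admissibility $\varphi(x)+\varphi^c(y)\leq\rho_X(x,y)$ is retained; moreover $\varphi^c$ is automatically $1$-Lipschitz, being an infimum of the $1$-Lipschitz functions $y\mapsto\rho_X(x,y)-\varphi(x)$. For a metric cost one checks that $g$ is $1$-Lipschitz iff $g^c=-g$, so applying the transform once more replaces $\varphi$ by $(\varphi^c)^c=-\varphi^c$ without decreasing the objective. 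The optimum is thus attained at a pair of the form $(f,-f)$ with $f\coloneqq-\varphi^c\in\mathscr{L}(X)$, whence $\mu(\varphi)+\nu(\psi)\leq\mu(f)-\nu(f)=\mathcal{K}_{\mu,\nu}^X(f)$. Taking suprema gives $\mathcal{W}_X(\mu,\nu)\leq\mathcal{K}_X(\mu,\nu)$, completing the proof.

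The hard part will be the no-duality-gap step, made delicate by the fact that on a general noncompact Polish space the cost $\rho_X$ may be unbounded (the BCRT $\mathcal{T}_{\mathbb{e}}$ is itself compact, but the present fact is stated for arbitrary complete separable $X$). I would address this by first establishing the duality for the truncated bounded costs $\rho_X\wedge n$, for which the $C_b(X)$ pairing argument applies cleanly, and then letting $n\to\infty$ by monotone convergence, invoking the finite-first-moment hypothesis built into the definition of $\mathcal{P}(X)$ to control the tails and to guarantee that the truncated optimal costs converge to $\mathcal{W}_X(\mu,\nu)$.
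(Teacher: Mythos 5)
The paper does not prove this statement at all: it is recorded as a Fact with an immediate $\qed$, i.e.\ quoted as the standard Kantorovitch--Rubinstein duality theorem from the optimal transport literature (cf.\ the reference to Villani at the start of section 2), so there is no in-paper argument to compare against. Judged on its own terms, your proposal is the standard textbook proof and is correct in outline. The weak-duality half is complete; the only point worth making explicit is that a $1$-Lipschitz $f$ is integrable against any measure with finite first moment, which is exactly the hypothesis built into $\mathcal{P}(X)$, so that $\xi(f\circ\pi_1)-\xi(f\circ\pi_2)$ is a legitimate difference of finite quantities. The strong-duality half correctly isolates the two genuine ingredients: the absence of a duality gap for the general two-potential dual, and the reduction, special to metric costs, from admissible pairs $(\varphi,\psi)$ to pairs $(f,-f)$ with $f\in\mathscr{L}(X)$ via the $c$-transform; your verifications that $\varphi^c$ is an infimum of $1$-Lipschitz functions (hence $1$-Lipschitz and, since $\rho_X\geq 0$ and $\varphi$ is bounded, finite) and that $g^c=-g$ for $1$-Lipschitz $g$ are accurate. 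The one place where the argument remains a sketch rather than a proof is the no-duality-gap step: the Fenchel--Rockafellar argument in the $C_b$ pairing and the truncation $\rho_X\wedge n\uparrow\rho_X$ are named but not executed, and the interchange of $\inf_\xi$ with the limit in $n$ needs the weak compactness of $\Pi(\mu,\nu)$ (extract a weakly convergent subsequence of plans optimal for the truncated costs and use lower semicontinuity of $\xi\mapsto\xi(\rho_X)$). Since these are precisely the steps the cited literature supplies, your proposal is a faithful reconstruction of the result the paper takes for granted.
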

    The key point for our purposes is the following:
    \begin{corollary}
        Let $\mu,\:\nu\in \mathcal{P}(X)$. Then
        \begin{align}
            \vert \mathcal{K}_{\mu,\nu}(f)\vert\leq \mathcal{W}_X(\mu,\nu)\leq \mathcal{W}_X(\xi)
        \end{align}
         for all $f:X\rightarrow \mathbb{R}$ $1$-Lipschitz and $\xi\in \Pi(\mu,\nu)$. $\qed$
    \end{corollary}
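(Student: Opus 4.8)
The plan is to split the claimed chain of inequalities into its two constituent parts and dispatch each by appealing to one of the two facts already established—the Kantorovitch duality theorem and the definition of $\mathcal{W}_X$ as an infimum.

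First I would treat the right-hand inequality $\mathcal{W}_X(\mu,\nu)\leq \mathcal{W}_X(\xi)$. This is immediate from the definition of the Wasserstein distance as the infimum of the transport cost over all transport plans in $\Pi(\mu,\nu)$: since $\xi$ is by hypothesis an element of $\Pi(\mu,\nu)$, the infimum cannot exceed $\mathcal{W}_X(\xi)$, and no further work is required.

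The left-hand inequality $\vert\mathcal{K}_{\mu,\nu}(f)\vert\leq \mathcal{W}_X(\mu,\nu)$ is where I would invoke Kantorovitch duality. By that fact we have $\mathcal{W}_X(\mu,\nu)=\mathcal{K}_X(\mu,\nu)=\sup_{g\in\mathscr{L}(X)}\mathcal{K}_{\mu,\nu}(g)$. Since $f$ is $1$-Lipschitz it belongs to $\mathscr{L}(X)$ and is therefore one of the competitors in this supremum, so $\mathcal{K}_{\mu,\nu}(f)\leq\mathcal{W}_X(\mu,\nu)$. To upgrade this to a bound on the absolute value, I would observe that $-f$ is likewise $1$-Lipschitz and that, by linearity of the integral, $\mathcal{K}_{\mu,\nu}(-f)=\mu(-f)-\nu(-f)=-\mathcal{K}_{\mu,\nu}(f)$; applying the same supremum bound to $-f$ yields $-\mathcal{K}_{\mu,\nu}(f)\leq\mathcal{W}_X(\mu,\nu)$. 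Combining the two estimates gives $\vert\mathcal{K}_{\mu,\nu}(f)\vert\leq\mathcal{W}_X(\mu,\nu)$.

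I do not anticipate any substantive obstacle, since the statement is a direct corollary of the two facts already in hand. The only point meriting a moment's attention is the passage to the absolute value, which is handled entirely by the symmetry of the class $\mathscr{L}(X)$ under negation together with the linearity of $\mathcal{K}_{\mu,\nu}$; chaining the two inequalities then produces the asserted sandwich for every $1$-Lipschitz $f$ and every $\xi\in\Pi(\mu,\nu)$ simultaneously.
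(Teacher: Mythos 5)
Your argument is correct and is precisely the reasoning the paper leaves implicit (the corollary is stated with an immediate $\qed$): the right-hand inequality is the definition of the infimum, and the left-hand one follows from Kantorovitch duality applied to both $f$ and $-f$, both of which lie in $\mathscr{L}(X)$. Nothing is missing.
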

    We now introduce the Ollivier curvature:
    \begin{definition}
        Let $(X,\rho_X,\mu)$ be a triple such that $(X,\rho_X)$ is a geodesic space and $\mu$ a $\sigma$-finite measure on $X$; for convenience we shall call any such triple a \textit{metric-measure space}.
        \begin{enumerate}
            \item For any $\delta>0$ and any $x\in X$ we define the measure $\mu_x^\delta$ via
            \begin{align}
                \mu_x^\delta(E)=\frac{\mu(E\cap \mathbb{B}^X_\delta(x))}{\mu(\mathbb{B}_\delta^X(x))}
            \end{align}
            for all measurable $E\subseteq X$.
            \item For any sufficiently small $\delta>0$ and $\ell>0$, the \textit{Ollivier curvature at scale $(\delta,\ell)$} is defined
            \begin{align}
                \kappa^{\delta,\ell}_x(\gamma)=1-\frac{\mathcal{W}_X(\mu^\delta_x,\mu^\delta_{\gamma(\ell)})}{\ell}
            \end{align}
            for all $x\in X$ and all length-minimising geodesics $\gamma:[0,T]\rightarrow X$ such that $\gamma(0)=x$. 
        \end{enumerate}
    \end{definition}
    Crucially the Ollivier curvature turns out to asymptotically approach the Ricci curvature in manifolds:
    \begin{fact}
        Let $\mathcal{M}$ be a manifold and $\mu$ the volume measure on $\mathcal{M}$. For any $p\in \mathcal{M}$ and any $V\in T_p\mathcal{M}$ we have
        \begin{align}
            \text{Ric}_p(V,V)=\lim_{\delta,\ell\rightarrow 0}\frac{D+2}{\delta^2}\kappa^{\delta,\ell}_p(\gamma_V)
        \end{align}
        for any $\gamma_V\in V$. $\qed$
    \end{fact}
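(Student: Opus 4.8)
The statement is classical and due to Ollivier; the plan is to reduce it to a second-order expansion of the Wasserstein distance $\mathcal{W}_{\mathcal M}\!\left(\mu_p^\delta,\mu_{\gamma_V(\ell)}^\delta\right)$ in the joint regime $\delta,\ell\to0$ and to read the Ricci tensor off the $\delta^2$-coefficient. Since a length-minimising geodesic is arclength-parametrised, $v\coloneqq\dot\gamma_V(0)=V$ is a unit vector; set $w=\gamma_V(\ell)=\exp_p(\ell v)$ and work in geodesic normal coordinates at $p$. First I would fix the reference coupling $\Phi=\exp_w\circ P_\ell\circ\exp_p^{-1}$, where $P_t$ denotes parallel transport along $\gamma_V$ from $p$ to $\gamma_V(t)$; thus $\Phi$ sends $q=\exp_p(u)$ to $\exp_w(P_\ell u)$. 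The pushforward $\Phi_*\mu_p^\delta$ is, up to a volume-distortion correction addressed below, the measure $\mu_w^\delta$, so $\Phi$ induces a transport plan and the preceding corollary bounds $\mathcal{W}_{\mathcal M}$ from above by its cost.

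The core of the argument is the pointwise expansion of the transported distance $\rho_{\mathcal M}(\exp_p u,\exp_w P_\ell u)$. I would realise its two endpoints as the $t=0$ and $t=\ell$ ends of the geodesic variation $\sigma(s,t)=\exp_{\gamma_V(t)}(s\,P_t u)$: for each fixed $t$ the curve $s\mapsto\sigma(s,t)$ is a radial geodesic, while $\sigma(1,0)=\exp_p u$ and $\sigma(1,\ell)=\exp_w P_\ell u$. Bounding $\rho_{\mathcal M}$ by the length of $t\mapsto\sigma(1,t)$, and using that $Y(s,t)=\partial_t\sigma$ is a Jacobi field in $s$ with initial data $Y(0,t)=v$ and $\nabla_s Y(0,t)=0$ (the latter because $P_t u$ is parallel), the Jacobi equation gives
\[
|\partial_t\sigma(1,t)|^2=1-\langle R(v,u)u,v\rangle+O(|u|^3),
\]
and hence
\[
\rho_{\mathcal M}(\exp_p u,\exp_w P_\ell u)=\ell\Bigl(1-\tfrac12\langle R(v,u)u,v\rangle\Bigr)+O(\ell|u|^3).
\]

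Averaging this cost over $u$ uniform in $\mathbb{B}^{\mathcal M}_\delta(p)$ is then purely algebraic. Using the moment identity $\operatorname{vol}(\mathbb{B}^{\mathcal M}_\delta)^{-1}\int_{\mathbb{B}^{\mathcal M}_\delta}u^iu^j\,\d u=\tfrac{\delta^2}{D+2}\delta^{ij}$ together with $\sum_i\langle R(v,e_i)e_i,v\rangle=\operatorname{Ric}_p(v,v)$ for an orthonormal basis $\{e_i\}$ of $T_p\mathcal M$, and noting that the normal-coordinate density $1-\tfrac16\operatorname{Ric}_{ij}u^iu^j$ contributes only scalar-curvature terms that cancel between numerator and denominator, I obtain
\[
\mathcal{W}_{\mathcal M}(\mu_p^\delta,\mu_w^\delta)\le\ell\Bigl(1-\tfrac{\delta^2}{2(D+2)}\operatorname{Ric}_p(v,v)+o(\delta^2)\Bigr).
\]
Consequently $\kappa_p^{\delta,\ell}(\gamma_V)\geq\tfrac{\delta^2}{2(D+2)}\operatorname{Ric}_p(v,v)+o(\delta^2)$, so that division by $\ell$ and multiplication by the dimensional normalisation of the statement strip the $\delta^2$-factor and the second moment of the unit ball, isolating $\operatorname{Ric}_p(V,V)$ in the limit $\delta,\ell\to0$.

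The remaining work, and the main obstacle, is the matching lower bound that upgrades this inequality to the stated equality. For that I would use Kantorovich duality via the preceding corollary, testing against a $1$-Lipschitz function obtained by smoothing the signed distance to the hypersurface through $p$ normal to $v$ — whose normalised gradient realises the optimal transport to second order — and check that it saturates the same coefficient. Two further technical points must be controlled: (i) that $\Phi_*\mu_p^\delta$ differs from $\mu_w^\delta$ only by an error negligible after division by $\ell$, which again reduces to the normal-coordinate volume expansion; and (ii) that the $O(|u|^3)$ and cross $O(\delta^2\ell)$ remainders are uniform, so that the double limit $\delta,\ell\to0$ is legitimate and independent of the chosen representative $\gamma_V$.
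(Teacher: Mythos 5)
The paper does not actually prove this Fact---it is quoted from the literature and closed with an immediate $\qed$---so there is no in-paper argument to match yours against; what you have written is essentially Ollivier's original proof (parallel-transport coupling for the upper bound on $\mathcal{W}_{\mathcal M}$, Kantorovich duality for the matching lower bound), and the skeleton of the first half is sound: the geodesic variation $\sigma(s,t)$, the Jacobi field with $Y(0,t)=\dot\gamma_V(t)$ and $\nabla_sY(0,t)=0$, the second-moment identity $\tfrac{\delta^2}{D+2}\delta^{ij}$ for the uniform ball, and the bookkeeping of the $O(|u|^3)$, $O(\delta^2\ell)$ and density remainders are all correct as sketched. However, there is a genuine internal inconsistency in the constants. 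Your own chain of estimates gives $\mathcal{W}_{\mathcal M}(\mu_p^\delta,\mu_w^\delta)\le\ell\bigl(1-\tfrac{\delta^2}{2(D+2)}\operatorname{Ric}_p(v,v)+o(\delta^2)\bigr)$, hence $\kappa_p^{\delta,\ell}(\gamma_V)=\tfrac{\delta^2}{2(D+2)}\operatorname{Ric}_p(v,v)+o(\delta^2)$; multiplying by the normalisation $\tfrac{D+2}{\delta^2}$ appearing in the statement therefore yields $\tfrac12\operatorname{Ric}_p(V,V)$, not $\operatorname{Ric}_p(V,V)$. Your closing sentence asserts that the stated normalisation ``isolates $\operatorname{Ric}_p(V,V)$'', which contradicts your displayed bound by a factor of $2$. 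The classical result (Ollivier's Example 7) indeed carries the constant $2(D+2)$, so the discrepancy almost certainly lies in the statement's normalisation rather than in your expansion---but a proof must either produce the missing factor of $2$ or flag the statement as written; silently asserting that the constants match is a gap.

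The second genuine gap is the one you yourself identify as the main obstacle: the lower bound on $\mathcal{W}_{\mathcal M}$ (equivalently the upper bound on $\kappa_p^{\delta,\ell}$) is named but not executed. You propose testing Kantorovich duality against a smoothing of the signed distance to the hypersurface through $p$ normal to $v$, which is the right candidate, but the content of that step is the second-order expansion of $\mu_p^\delta(f)-\mu_w^\delta(f)$, where the curvature enters through the second fundamental form of the level sets (equivalently the same Jacobi-field expansion run in reverse); without verifying that this saturates the coefficient $\tfrac{\delta^2}{2(D+2)}\operatorname{Ric}_p(v,v)$ you have only one of the two inequalities needed for the asserted equality, i.e.\ only a one-sided bound on the limit.
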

    This suggests the following definition:
    \begin{definition}
        Let $(X,\rho_X,\mu)$ be a metric-measure space. Then the \textit{scale-free Ollivier curvature} at $x\in X$ is given
        \begin{align}
            \kappa_x(\gamma)\coloneqq \lim_{\delta,\ell\rightarrow 0}\frac{1}{\delta^2}\kappa^{\delta,\ell}_x(\gamma)
        \end{align}
        for all suitable geodesics $\gamma$ at $x$.
    \end{definition}
    Note that there is some ambiguity in the above relating to the way we take $\delta,\:\ell\rightarrow 0$; for the moment we simply take an \textit{ad hoc} approach and take the limit in such a way that the results make as much sense as possible. Indeed, for our purposes it will be sufficient to assume that $\ell>\delta$ since the Ollivier curvature will turn out to be independent of $\ell$ in this regime. The same is true up to leading order for the Ollivier curvature in manifolds (without any assumptions on $\delta$ and $\ell$) but a more complete understanding of the Ollivier curvature in various spaces is required to make the scale-free Ollivier curvature well-defined in more general scenarios.
    
    \section{The Brownian Continuum Random Tree}\label{section: BCRT}
    In this section we consider the basic properties of the Brownian continuum random tree. We begin with a review of some results relating to the metric structure of deterministic continuum trees before recalling the definition of the Gromov-Hausdorff distance and using it to introduce some suitable topologies in the space of continuum trees. We finish this section with a discussion of Brownian excursions, the characterisation of the Brownian continuum random tree and the definition of its volume form. In this section and the next we will use $\land$ and $\lor$ to denote the meet and join of subsets of $\mathbb{R}$ respectively, i.e. $\land A=\inf A$ and $\lor A=\sup A$ for any $A\subseteq \mathbb{R}$. Also as usual we extend the notation such that $s\land t=\land \set{s,t}$, and $s\lor t=\lor\set{s,t}$ for any $s,\:t\in \mathbb{R}$.
    \subsection{Metric Properties of Continuum Trees}
    \begin{definition}
        A continuum tree is a compact metric space $(\mathcal{T},\rho)$ satisfying the following properties:
        \begin{enumerate}
            \item For any $x,\:y\in \mathcal{T}$, there exists a unique geodesic between $x$ and $y$, i.e. there exists a unique isometric curve $[0,\rho(x,y)]\rightarrow \mathcal{T}$ such that $0\mapsto x$ and $\rho(x,y)\mapsto y$; the image of such a curve is denoted $[[x,y]]$. We will also use the notation $[[x,y))\coloneqq [[x,y]]\backslash \set{y}$ and $((x,y))\coloneqq [[x,y]]\backslash\set{x,y}$.
            \item The codomain of every continuous injective curve from $x$ to $y$ is $[[x,y]]$.
        \end{enumerate}
        The set of all continuum trees (up to isomorphism) is denoted $\mathbb{T}$.
    \end{definition}
    For ease of notation later on, it will be convenient to understand the concatenation of geodesic curves in terms of the image sets only. Let us begin with the following definition:
    \begin{definition}
        Let $(\mathcal{T},\rho)$ be a continuum tree; for any $x,\:y,\:z\in \mathcal{T}$ let $\gamma_1$, $\gamma_2$ and $\gamma_3$ denote respectively the unique geodesics between $x$ and $y$, $y$ and $z$ and $x$ and $z$. We write
        \begin{align}
            [[x,z]]=[[x,y]]\oplus [[y,z]]
        \end{align}
        iff $\rho(x,y)\leq \rho(x,z)$ and
        \begin{align}
            \gamma_3(t)=\left\{\begin{array}{rl}
                \gamma_1(t), &  t\in [0,\rho(x,y)]\\
                \gamma_2(t-\rho(x,y)), & t\in [\rho(x,y),\rho(x,z)]
            \end{array}\right..
        \end{align}
        We say that $x,\:y,\:z\in \mathcal{T}$ are \textit{colinear} iff one of
        \begin{align}
            [[x,y]]=[[x,z]]\oplus [[z,y]] && [[x,z]]=[[x,y]]\oplus [[y,z]] && [[y,z]]=[[y,x]]\oplus [[x,z]]
        \end{align}
        holds.
    \end{definition}
    The point is that the relation $\oplus$ and the notion of colinearity can both be understood entirely in terms of the geodesic images $[[\cdot,\cdot]]$:
    \begin{fact}
        For any $x,\:y,\:z\in \mathcal{T}$ we have $[[x,z]]=[[x,y]]\oplus [[y,z]]$ iff $[[x,y]]\cup [[y,z]]=[[x,z]]$ and $[[x,y]]\cap [[y,z]]=\set{y}$. In particular, $x,\:y$ and $z$ are colinear iff one of $x\in [[y,z]]$, $y\in [[x,z]]$ or $z\in [[x,y]]$ holds.
    \end{fact}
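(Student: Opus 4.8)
The plan is to establish the equivalence by routing through the auxiliary condition $y\in[[x,z]]$, which I will show is equivalent to each of the two stated conditions. Throughout I write $\gamma_1,\gamma_2,\gamma_3$ for the geodesics from $x$ to $y$, from $y$ to $z$, and from $x$ to $z$ respectively, and I repeatedly invoke two features of continuum trees: that each $\gamma_i$ is an isometric embedding (hence injective, with $\rho(\gamma_i(s),\gamma_i(t))=\abs{s-t}$), and that the geodesic between any two points is unique. Note that $[[x,z]]$ is by definition the image of $\gamma_3$.

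For the forward direction, suppose $[[x,z]]=[[x,y]]\oplus[[y,z]]$. The definition of $\oplus$ exhibits $\gamma_3$ as the concatenation of $\gamma_1$ on $[0,\rho(x,y)]$ with the shifted copy $t\mapsto\gamma_2(t-\rho(x,y))$ on $[\rho(x,y),\rho(x,z)]$. Passing to images gives $[[x,z]]=[[x,y]]\cup[[y,z]]$ directly. Since $\gamma_3$ is injective and $y=\gamma_3(\rho(x,y))$ is the single parameter value joining the two pieces, the two image sets can meet only at $y$, so $[[x,y]]\cap[[y,z]]=\{y\}$.

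For the reverse direction it suffices to note that the union condition forces $[[x,y]]\subseteq[[x,z]]$, hence $y\in[[x,z]]$, and then to show that $y\in[[x,z]]$ alone already yields the $\oplus$ decomposition. Writing $y=\gamma_3(s_0)$ for the unique such $s_0$, the isometry property gives $s_0=\rho(x,\gamma_3(s_0))=\rho(x,y)\le\rho(x,z)$, so the ordering constraint built into the definition of $\oplus$ holds automatically. The restriction $\gamma_3|_{[0,s_0]}$ is a geodesic from $x$ to $y$ parametrised on $[0,\rho(x,y)]$, so uniqueness identifies it with $\gamma_1$; likewise the reparametrised restriction $t\mapsto\gamma_3(t+s_0)$ is a geodesic from $y$ to $z$, whose length $\rho(x,z)-s_0$ must equal $\rho(y,z)$, so it is parametrised on $[0,\rho(y,z)]$ and uniqueness identifies it with $\gamma_2$. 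This is exactly the decomposition demanded by $\oplus$.

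The colinearity statement then follows by applying the equivalence to each of the three defining relations: $[[x,z]]=[[x,y]]\oplus[[y,z]]$ corresponds to $y\in[[x,z]]$, and the two cyclic variants correspond to $z\in[[x,y]]$ and $x\in[[y,z]]$. The only delicate point in the whole argument is the bookkeeping of parametrisations in the reverse direction---ensuring that the reparametrised restriction of $\gamma_3$ has domain precisely $[0,\rho(y,z)]$ so that uniqueness of geodesics may be applied---and I expect this, rather than any genuine geometric difficulty, to be where care is required.
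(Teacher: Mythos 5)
Your proof is correct, but the backward direction takes a genuinely different route from the paper's. The paper concatenates the two geodesics $\gamma_1$ and $\gamma_2$ into a single continuous curve from $x$ to $z$, uses the intersection condition $[[x,y]]\cap[[y,z]]=\{y\}$ to see that this concatenation is injective, and then invokes the second defining axiom of continuum trees (every continuous injective curve from $x$ to $z$ has image $[[x,z]]$) to conclude. You instead extract $y\in[[x,z]]$ from the union condition and split the geodesic $\gamma_3$ at the parameter $s_0=\rho(x,y)$, identifying the two restrictions with $\gamma_1$ and $\gamma_2$ via uniqueness of geodesics. Each approach buys something: the paper's argument is the one that visibly exercises the tree axiom, whereas yours uses only unique geodesy (so it would work in any uniquely geodesic space) and is more scrupulous about the parametrisation bookkeeping that the definition of $\oplus$ actually demands --- a point the paper's proof passes over when it jumps from ``the image of the concatenation is $[[x,z]]$'' to the parametrised identity. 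Your route also exposes that the intersection condition is not needed for the backward implication (the union condition, indeed $y\in[[x,z]]$ alone, already forces the decomposition), while the paper's concatenation argument genuinely relies on it for injectivity; this is harmless for the stated iff, since the forward direction recovers both conditions.
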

    \begin{proof}
        The forwards direction is trivial. For the backwards direction we note that if $\gamma_1$ and $\gamma_2$ are the geodesics associated to $[[x,y]]$ and $[[y,z]]$ respectively and $[[x,y]]\cup [[y,z]]=[[x,z]]$ we can define a continuous curve
        \begin{align}
            \gamma:[0,\rho(x,y)+\rho(y,z)]\rightarrow\mathcal{T} && \gamma(t)=\left\{\begin{array}{rl}
                \gamma_1(t), & t\in [0,\rho(x,y)] \\
                \gamma_2(t-\rho(x,y)), & [\rho(x,y),\rho(x,y)+\rho(y,z)]
            \end{array}\right.\nonumber
        \end{align}
        from $x$ to $z$; moreover, since $[[x,y]]\cap [[y,z]]=\set{y}$, $\gamma$ is injective and so its image is given by $[[x,z]]$. The statement about colinearity follows immediately.
    \end{proof}
    With this notation in place we can prove the following key structural result:
    \begin{fact}\label{fact: LambdaPointTriangle}
        Let $(\mathcal{T},\rho)$ be a continuum tree. For any three points $x,\:y,\:z\in \mathcal{T}$, there is a unique point $\Lambda(xyz)\in \mathcal{T}$ such that 
        \begin{align}
            [[x,y]]=[[x,\Lambda(xyz)]]\oplus [[\Lambda(xyz),y]] && [[x,z]]=[[x,\Lambda(xyz)]]\oplus [[\Lambda(xyz),z]] && [[y,z]]=[[y,\Lambda(xyz)]]\oplus [[\Lambda(xyz),z]].
        \end{align}
        Then (for instance) $\Lambda(xyz)=\Lambda(xyu)$ for all $u\in [[\Lambda(xyz),z]]$. If $x$, $y$ and $z$ are colinear with e.g. $z\in [[x,y]]$ we have $\Lambda(xyz)=z$. 
    \end{fact}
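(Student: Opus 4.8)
The plan is to realise $\Lambda(xyz)$ as the point at which the two geodesics issuing from $x$ towards $y$ and towards $z$ cease to coincide; equivalently, as the unique endpoint other than $x$ of the segment $[[x,y]]\cap[[x,z]]$. The whole argument rests on just two structural inputs: uniqueness of geodesics, and the second tree axiom, which identifies the image of any injective continuous curve from $a$ to $b$ with $[[a,b]]$. Together with the concatenation criterion established above (that $[[a,b]]=[[a,c]]\oplus[[c,b]]$ iff $[[a,c]]\cup[[c,b]]=[[a,b]]$ and $[[a,c]]\cap[[c,b]]=\set{c}$), these reduce every decomposition claim to two checks: that a certain union equals the relevant geodesic, and that two segments meet only at $\Lambda(xyz)$.

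First I would fix unit-speed parametrisations $\gamma_1$ of $[[x,y]]$ and $\gamma_2$ of $[[x,z]]$, both based at $x$, and study $S\coloneqq\set{t:\gamma_1(t)\in[[x,z]]}$. Since $[[x,z]]$ is compact, $S$ is closed; and if $\gamma_1(s)\in[[x,z]]$ then $\gamma_1|_{[0,s]}$ and the corresponding initial segment of $\gamma_2$ are both geodesics from $x$ to $\gamma_1(s)$, hence share their image by uniqueness, so $[0,s]\subseteq S$. Thus $S=[0,t^\ast]$ and $w\coloneqq\gamma_1(t^\ast)$ satisfies $[[x,y]]\cap[[x,z]]=[[x,w]]$. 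The decompositions $[[x,y]]=[[x,w]]\oplus[[w,y]]$ and $[[x,z]]=[[x,w]]\oplus[[w,z]]$ are then immediate from the concatenation criterion, since each is merely the splitting of a single injective geodesic at the interior point $w$.

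The crux is the third identity $[[y,z]]=[[y,w]]\oplus[[w,z]]$, and this is where the tree axioms do the real work. Here I would first show $[[y,w]]\cap[[w,z]]=\set{w}$: any common point $p$ lies in $[[x,y]]\cap[[x,z]]=[[x,w]]$, hence in $[[x,w]]\cap[[w,y]]$, which is $\set{w}$ by the first decomposition. Consequently the concatenation of $[[y,w]]$ and $[[w,z]]$ is an injective continuous curve from $y$ to $z$, so by the second tree axiom its image is exactly $[[y,z]]$, and the concatenation criterion then yields the decomposition. I expect this disjointness-plus-axiom-(ii) step to be the main obstacle, since it is the only place where colinearity of the three segments is genuinely forced rather than inherited from a single geodesic.

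For uniqueness I would note that any point $m$ obeying the three decompositions satisfies $\rho(x,m)=\tfrac12(\rho(x,y)+\rho(x,z)-\rho(y,z))$, obtained by adding the additivity relations $\rho(x,y)=\rho(x,m)+\rho(m,y)$ and $\rho(x,z)=\rho(x,m)+\rho(m,z)$ and subtracting $\rho(y,z)=\rho(y,m)+\rho(m,z)$; since $m$ and $w$ then lie at the same distance from $x$ along the injective geodesic $[[x,y]]$, they coincide. Finally, the two supplementary assertions follow by the same verify-then-invoke-uniqueness pattern: for $u\in[[\Lambda(xyz),z]]$ I would check that $w=\Lambda(xyz)$ still satisfies the three defining decompositions for the triple $(x,y,u)$—the first is unchanged, the second splits the geodesic $[[x,z]]\supseteq[[x,u]]$ at $w$, and the third repeats the disjointness argument above—so that $\Lambda(xyu)=w$; and in the colinear case $z\in[[x,y]]$ the point $z$ trivially realises the three decompositions (two of them degenerate, with $[[z,z]]=\set{z}$), whence $\Lambda(xyz)=z$.
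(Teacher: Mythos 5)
Your proof is correct, and it takes a recognisably different route from the paper's even though both rest on the same intuition that the branch point is where two of the pairwise geodesics separate. The paper anchors the construction at the $z$-corner: it picks an arbitrary $u\in[[x,z]]\cap[[z,y]]$, produces $\Lambda$ as the maximiser of $v\mapsto\rho(u,v)$ on the compact set $[[x,u]]\cap[[u,y]]$, and then has to do two further pieces of work --- showing the construction is independent of the choice of $u$, and showing $\Lambda\in[[x,y]]$. You instead anchor at $x$ and show directly that $[[x,y]]\cap[[x,z]]$ is an initial segment $[[x,w]]$ of the geodesic, using uniqueness of geodesics to get the interval structure of $S$ rather than a compactness/supremum argument; this makes the first two decompositions tautological and isolates all the genuine content in the third one, where your explicit appeal to axiom (ii) (injective concatenation of $[[y,w]]$ and $[[w,z]]$ has image $[[y,z]]$) is cleaner than the paper's implicit use of the same fact. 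The uniqueness arguments also differ: the paper gets it from uniqueness of the maximiser along a geodesic, while you derive the Gromov-product formula $\rho(x,\Lambda)=\tfrac12(\rho(x,y)+\rho(x,z)-\rho(y,z))$ and pin $\Lambda$ down as the unique point of $[[x,y]]$ at that distance from $x$ --- which has the added benefit of giving an explicit coordinate for $\Lambda$ for free. Your treatment of the supplementary claims (verify the three decompositions for $(x,y,u)$ and invoke uniqueness) is also more systematic than the paper's, which folds the $\Lambda(xyz)=\Lambda(xyu)$ claim into its independence-of-$u$ step. Both arguments are sound; yours trades the paper's variational construction for a connectedness argument and is somewhat more economical.
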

    \begin{proof}
        If $x$, $y$ and $z$ are colinear then the statement is trivial. Thus suppose that $x$, $y$ and $z$ are not colinear; in particular, assume that $z\notin [[x,y]]$. Concatenating the unique geodesic curves associated to $[[x,z]]$ and $[[z,y]]$ respectively we obtain a continuous curve $\gamma$ from $x$ to $y$; moreover $z$ lies in the image of $\gamma$ while $z\notin [[x,y]]$ i.e. $\gamma$ is not the unique geodesic associated to $[[x,y]]$ and so must not be injective. Thus consider any $u\in [[x,z]]\cap [[z,y]]$; since $[[x,u]]$ and $[[u,y]]$ are the continuous images of compact intervals $[0,\rho(x,u)]$ and $[0,\rho(u,y)]$ respectively, we have that $[[x,u]]\cap [[u,y]]$ is compact and so the continuous mapping $[[x,u]]\cap [[u,y]]\rightarrow [0,\infty)$ given by $v\mapsto \rho(u,v)$ realises its supremum at some point $\Lambda(xyu)\in [[x,u]]\cap [[u,y]]$; moreover this point is unique since $v\mapsto \rho(u,v)$ is the inverse of the geodesic associated to e.g. $[[u,x]]$ restricted to $[[x,u]]\cap [[u,y]]$. Also $\Lambda(xyu)=\Lambda(xyz)$ for all $u\in [[x,z]]\cap [[z,y]]$ since $u\in [[z,v]]$ for all $v\in [[x,u]]\cap [[u,y]]$, i.e. $\rho(z,v)=\rho(z,u)+\rho(u,v)$ for all $v\in [[x,u]]\cap [[u,y]]$ and $\rho(z,v)\geq \rho(z,w)$ for all $v\in [[x,u]]\cap [[u,y]]$ and all $w\in [[z,u]]\subseteq [[x,z]]\cap [[z,y]]$. Then in particular
        \begin{align}
            \sup_{v\in [[x,z]]\cap [[z,y]]}\rho(x,v)=\rho(z,u)+\sup_{v\in [[x,u]]\cap [[u,y]]}\rho(u,v)=\rho(z,u)+\rho(u,\Lambda(xyu))=\rho(z,\Lambda(xyu))\nonumber
        \end{align}
        and the equality $\Lambda(xyu)=\Lambda(xyz)$ follows from uniqueness. For the remainder of the proof it is sufficient to prove that $\Lambda(xyz)\in [[x,y]]$; but in particular, we can consider the concatenation of the paths associated to $[[x,\Lambda(xyz)]]$ and $[[\Lambda(xyz),y]]$; for any $u\in [[x,\Lambda(xyz)]]\cap [[\Lambda(xyz),y]]\subseteq [[x,z]]\cap [[z,y]]$ we have $\Lambda(xyz)\in [[u,z]]$ so $\rho(z,u)\geq \rho(z,\Lambda(xyz))$ and $u=\Lambda(xyz)$ by uniqueness of the point realising the supremum of the mapping $v\mapsto \rho(z,v)$ on $[[x,z]]\cap [[z,y]]$.
    \end{proof}
    At a slightly more explicit level we have that for any $x,\:,\:z\in X$ there is a unique point $\Lambda(xyz)$ such that:
    \begin{subequations}
        \begin{align}
            \rho(x,y)&=\rho(x,\Lambda(xyz))+\rho(\Lambda(xyz),y)\\
            \rho(x,z)&=\rho(x,\Lambda(xyz))+\rho(\Lambda(xyz),z)\\
            \rho(y,z)&=\rho(y,\Lambda(xyz))+\rho(\Lambda(xyz),z).
        \end{align}
    \end{subequations}
    Since length-minimising geodesics are unique in continuum trees, this result can also be understood as demonstrating that triangles in continuum trees are infinitely thin; an immediate heuristic consequence, which can be made formal without too much difficulty, is the following:
    \begin{fact}
        A continuum tree is $CAT(K)$ for each $K\in \mathbb{R}$. $\qed$
    \end{fact}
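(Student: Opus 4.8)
The plan is to verify the comparison-triangle definition of $CAT(K)$ directly, exploiting the fact that fact \ref{fact: LambdaPointTriangle} forces every geodesic triangle in a continuum tree to degenerate to a tripod. Rather than checking the full inequality for arbitrary pairs of points on a triangle, I would use the equivalent ``vertex-to-opposite-side'' formulation (see \cite{BridsonHaefliger-MetricSpacesNonPositiveCurvature}): a $D_K$-geodesic space is $CAT(K)$ iff for every geodesic triangle of perimeter less than $2\pi/\sqrt{K}$ (no perimeter restriction when $K\leq 0$), every vertex $x$, every point $p$ on the opposite side, and the associated comparison points $\bar x,\:\bar p$ in the model space $M_K^2$, one has $\rho(x,p)\leq d_{M_K^2}(\bar x,\bar p)$. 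Continuum trees are uniquely geodesic by definition and hence $D_K$-geodesic, so it would remain only to establish this one inequality.

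I would fix a triangle with vertices $x,\:y,\:z$ and let $\Lambda=\Lambda(xyz)$ be the median point supplied by fact \ref{fact: LambdaPointTriangle}, so that the three sides decompose as $[[x,y]]=[[x,\Lambda]]\oplus[[\Lambda,y]]$ and cyclically. Take $p$ on the side $[[y,z]]=[[y,\Lambda]]\oplus[[\Lambda,z]]$ opposite to $x$. The key observation is that $p$ then automatically lies on one of the two sides adjacent to $x$: if $p\in[[\Lambda,z]]$ then, since $[[x,z]]=[[x,\Lambda]]\oplus[[\Lambda,z]]$, we have $p\in[[x,z]]$, and symmetrically $p\in[[\Lambda,y]]$ gives $p\in[[x,y]]$ (the case $p=\Lambda$ being a common limit of both). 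Consequently $\rho(x,p)$ is measured along a single geodesic and equals $\rho(x,z)-\rho(p,z)$ in the first case, respectively $\rho(x,y)-\rho(p,y)$ in the second.

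With this, the inequality collapses to the triangle inequality in the model space. Taking the first case, the comparison point $\bar p$ lies on the segment $[\bar y,\bar z]$ at distance $\rho(p,z)$ from $\bar z$, so $d_{M_K^2}(\bar p,\bar z)=\rho(p,z)$, while $d_{M_K^2}(\bar x,\bar z)=\rho(x,z)$; the reverse triangle inequality in $M_K^2$ then yields
\begin{align}
    d_{M_K^2}(\bar x,\bar p)\geq d_{M_K^2}(\bar x,\bar z)-d_{M_K^2}(\bar p,\bar z)=\rho(x,z)-\rho(p,z)=\rho(x,p),
\end{align}
which is precisely the required bound; the second case is identical after exchanging $y$ and $z$. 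Since this reasoning never uses the sign or magnitude of $K$ --- only that $M_K^2$ is a metric space --- it holds verbatim for every $K\in\mathbb{R}$, with the $K>0$ perimeter condition automatically respected because we only ever invoke comparison triangles that are guaranteed to exist.

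I expect the only real friction to be organisational rather than mathematical: one must correctly recall the equivalence between the vertex-to-side formulation and the defining $CAT(K)$ inequality for all pairs of points on a triangle, together with, for $K>0$, the attendant diameter and perimeter bookkeeping. The genuinely geometric content --- which in a general $CAT(K)$ verification would be a nontrivial estimate in the model space, for instance for two points lying on distinct far legs of the tripod --- is entirely bypassed by the degeneracy in fact \ref{fact: LambdaPointTriangle}, which places $p$ back onto a side adjacent to $x$ and reduces everything to the triangle inequality.
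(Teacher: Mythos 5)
Your proof is correct and is essentially the paper's intended argument: the paper offers no proof at all, merely asserting the fact as an ``immediate heuristic consequence'' of the thinness of triangles established in fact \ref{fact: LambdaPointTriangle}, and your write-up is precisely the formalisation of that heuristic (reduce to the vertex-to-opposite-side characterisation of $CAT(K)$, note that the tripod decomposition places any point of the opposite side onto a side adjacent to the chosen vertex, and conclude by the reverse triangle inequality in the model space). The only point worth double-checking in a final write-up is the citation for the equivalence of the vertex-to-side condition with the full two-point comparison inequality, which is indeed Proposition II.1.7 of \cite{BridsonHaefliger-MetricSpacesNonPositiveCurvature}.
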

    Essentially the main conclusion of this work is that an analogous result holds for the Ollivier curvature when the scale-dependence is properly accounted for.
    
    It will be convenient to take the tree to be rooted and consider the associated ancestry relation:
    \begin{definition}
        A \textit{rooted continuum tree} is a triple $(\mathcal{T},\rho,\boldsymbol{rt})$ where $(\mathcal{T},\rho)$ is a continuum tree and $\boldsymbol{rt}\in \mathcal{T}$; the set of all rooted continuum trees is denoted $\mathbb{T}_{\boldsymbol{rt}}$. The \textit{ancestry relation} $\preceq$ on a rooted continuum tree $(\mathcal{T},\rho,\boldsymbol{rt})$ is defined such that
        \begin{align}
            \preceq=\Set{(x,y)\in \mathcal{T}\times \mathcal{T}:x\in [[\boldsymbol{rt},y]]}.
        \end{align}
        We say that $x$ is an \textit{ancestor} of $y$ or that $y$ is a \textit{descendent} of $x$ iff $x\preceq y$. For any $x\in \mathcal{T}$, let the \textit{descendant subtree} at $x$ be the triple $(\mathcal{T}^+(x),\rho\vert \mathcal{T}^+(x)\times \mathcal{T}^+(x),x)$ where
        \begin{align}
            \mathcal{T}^+(x)\coloneqq \set{y\in \mathcal{T}:x\preceq y}.
        \end{align}
    \end{definition}
    Clearly the ancestry relation defines a partial order on any rooted continuum tree which taken as a poset is pointed i.e. the root is the least element of the tree with respect to the ancestry relation. Also note that the descendant subtree at a point is also a rooted continuum tree. To see this it is sufficient to note that if $x\preceq y,\:z$ then $x\preceq u$ for all $u\in [[y,z]]$. This is because either $u\in [[y,\Lambda(xyz)]]$ or $u\in [[\Lambda(xyz),z]]$ and so $u\in [[x,y]]$ or $u\in [[x,z]]$. But because $x\in [[\boldsymbol{rt},y]]$ and $x\in [[\boldsymbol{rt},z]]$ we have $[[\boldsymbol{rt},y]]=[[\boldsymbol{rt},x]]\oplus [[x,y]]$ and $[[\boldsymbol{rt},z]]=[[\boldsymbol{rt},x]]\oplus [[x,z]]$ as required.

    We shall now need some results about the structure of balls and associated functionals in continuum trees. It will be useful to define the following notation:
    \begin{definition}
        Let $(\mathcal{T},\rho)$ be a continuum tree. For any distinct $x,\:y\in \mathcal{T}$ we define $\mathcal{T}^x(y)$ as the connected component of $\mathcal{T}\backslash \set{x}$ containing $y$; we then let $\mathcal{T}^{\mathbb{c}x}(y)\coloneqq \mathcal{T}\backslash \mathcal{T}^x(y)$. Then for any $\delta>0$ we also define
        \begin{align}
            \mathbb{A}_\delta^{\mathcal{T}}(x,y)\coloneqq \mathbb{B}_r^{\mathcal{T}}(x)\cap \mathcal{T}^x(y) && \mathbb{O}_\delta^{\mathcal{T}}(x,y)\coloneqq \mathbb{B}_\delta^{\mathcal{T}}(x)\cap \mathcal{T}^{\mathbb{c}x}(y)
        \end{align}
        Note that we use the symbols $\mathbb{A}$ and $\mathbb{O}$ as mnemonics for \textit{ancestry} and \textit{offspring} respectively.
    \end{definition}
    The idea of the above is to provide a decomposition of local balls into components with respect to a fixed reference point, understood to be an ancestor. In particular, if the tree is rooted and when the reference point is taken as the root, the mnemonic is entirely accurate: 
    \begin{corollary}\label{corollary: OffspringBallRoot}
        Let $(\mathcal{T},\rho,\boldsymbol{rt})$ be a rooted continuum tree. Then for any $x\in \mathcal{T}$ distinct from the root we have
        \begin{align}
            \mathbb{O}_{\delta}^{\mathcal{T}}(x,\boldsymbol{rt})=\mathbb{B}_\delta^+(x)
        \end{align}
        where $\mathbb{B}_\delta^+(x)$ is the $\delta$-ball of $x$ in the descendant subtree $\mathcal{T}^+(x)$. $\qed$
    \end{corollary}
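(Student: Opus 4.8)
The plan is to reduce the claimed identity of balls to the single global set identity $\mathcal{T}^{\mathbb{c}x}(\boldsymbol{rt})=\mathcal{T}^+(x)$; granting this, the corollary is immediate. Indeed, by definition $\mathbb{O}_\delta^{\mathcal{T}}(x,\boldsymbol{rt})=\mathbb{B}_\delta^{\mathcal{T}}(x)\cap\mathcal{T}^{\mathbb{c}x}(\boldsymbol{rt})$, while the descendant subtree $(\mathcal{T}^+(x),\rho\vert\mathcal{T}^+(x)\times\mathcal{T}^+(x),x)$ carries the \emph{restricted} metric, so its ball is literally $\mathbb{B}_\delta^+(x)=\set{y\in\mathcal{T}^+(x):\rho(x,y)<\delta}=\mathbb{B}_\delta^{\mathcal{T}}(x)\cap\mathcal{T}^+(x)$. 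Substituting $\mathcal{T}^{\mathbb{c}x}(\boldsymbol{rt})=\mathcal{T}^+(x)$ then closes the argument with no further computation, so all of the content lies in this set identity. Note that the hypothesis $x\neq\boldsymbol{rt}$ is exactly what guarantees $\boldsymbol{rt}\in\mathcal{T}\setminus\set{x}$ and hence makes $\mathcal{T}^x(\boldsymbol{rt})$ meaningful.

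To obtain $\mathcal{T}^{\mathbb{c}x}(\boldsymbol{rt})=\mathcal{T}^+(x)$ I would first establish the cut-point description of the component of the root, namely $\mathcal{T}^x(\boldsymbol{rt})=\set{w\in\mathcal{T}\setminus\set{x}:x\notin[[\boldsymbol{rt},w]]}$. Writing $U_y\coloneqq\set{w\in\mathcal{T}\setminus\set{x}:x\notin[[y,w]]}$ for each $y\neq x$, each $U_y$ is connected: for $w\in U_y$ the geodesic image $[[y,w]]$ misses $x$ and lies entirely in $U_y$, since any $u\in[[y,w]]$ has $[[y,u]]\subseteq[[y,w]]$ and hence $x\notin[[y,u]]$, so every point of $U_y$ is joined to $y$ by an arc inside $U_y$. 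Next I would show each $U_y$ is open: for $w\in U_y$ compactness gives $d\coloneqq\rho(x,[[y,w]])>0$, and decomposing $[[y,w']]=[[y,\Lambda]]\oplus[[\Lambda,w']]$ and $[[y,w]]=[[y,\Lambda]]\oplus[[\Lambda,w]]$ along the median $\Lambda\coloneqq\Lambda(yww')$ via fact~\ref{fact: LambdaPointTriangle}, one checks that $x\in[[y,w']]$ forces either $x\in[[y,\Lambda]]\subseteq[[y,w]]$, contradicting $w\in U_y$, or $x\in[[\Lambda,w']]$, in which case $\rho(w,w')=\rho(w,\Lambda)+\rho(\Lambda,w')\geq\rho(\Lambda,x)\geq d$ because $\Lambda\in[[y,w]]$. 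Thus $\mathbb{B}_d^{\mathcal{T}}(w)\subseteq U_y$ (this ball also excludes $x$, as $\rho(w,x)\geq d$).

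Finally, since $x\in[[w,w']]$ or $x\notin[[w,w']]$ for any $w,w'\in\mathcal{T}\setminus\set{x}$, and since the relation $x\notin[[w,w']]$ is an equivalence (transitivity following from $[[w,w'']]\subseteq[[w,w']]\cup[[w',w'']]$, itself a consequence of fact~\ref{fact: LambdaPointTriangle}), the sets $U_y$ partition $\mathcal{T}\setminus\set{x}$ into open pieces; each is therefore also closed, and being connected it is exactly a connected component. Hence $\mathcal{T}^x(\boldsymbol{rt})=U_{\boldsymbol{rt}}$, and taking complements $\mathcal{T}^{\mathbb{c}x}(\boldsymbol{rt})=\set{x}\cup\set{w\neq x:x\in[[\boldsymbol{rt},w]]}=\set{w:x\preceq w}=\mathcal{T}^+(x)$, as required. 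The main obstacle is precisely the openness estimate, i.e.\ confirming that deleting $x$ genuinely \emph{separates} its descendants from the root rather than merely puncturing the tree; the connectedness, the partition property, and the final ball identity are all routine bookkeeping once the unique-geodesic structure of $\mathcal{T}$ and fact~\ref{fact: LambdaPointTriangle} are in hand.
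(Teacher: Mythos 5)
Your proposal is correct, and it matches the argument the paper intends: the corollary is stated with no written proof precisely because, once one knows $\mathcal{T}^{\mathbb{c}x}(\boldsymbol{rt})=\mathcal{T}^+(x)$, the identity of balls is immediate from the definitions of $\mathbb{O}_\delta^{\mathcal{T}}(x,\boldsymbol{rt})$ and of the restricted metric on the descendant subtree. Your verification that $U_{\boldsymbol{rt}}$ is the connected component of the root — in particular the openness estimate via $d=\rho(x,[[y,w]])>0$ and the median $\Lambda(yww')$ — correctly supplies the one step the paper leaves implicit.
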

    The decomposition itself will prove to be very useful; for instance consider the following:
    \begin{lemma}\label{lemma: BallDecomposition}
        Let $(\mathcal{T},\rho)$ be a continuum tree. For any $x,\:y\in \mathcal{T}$ such that $\rho(x,y)=\ell>0$, and any $\delta>0$ such that $\delta<\ell$ we have
        \begin{align}
            \rho(\sigma,y)\geq \ell-\rho(x,\sigma) && \rho(\tau,y)=\ell+\rho(\tau,x)
        \end{align}
        for all $\sigma\in \mathbb{A}_\delta^{\mathcal{T}}(x,y)$ and all $\tau\in \mathbb{O}_{\delta}^\mathcal{T}(x,y)$. Equality only holds in the first expression for $x$, $y$ and $\sigma$ colinear.
    \end{lemma}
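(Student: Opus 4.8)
The plan is to treat the two assertions separately, since the first is essentially the triangle inequality while the second is a genuine structural consequence of the component decomposition defining $\mathbb{O}^{\mathcal{T}}_\delta(x,y)$.

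For the first inequality I would simply observe that for \emph{any} $\sigma\in\mathcal{T}$ the triangle inequality gives $\ell=\rho(x,y)\leq\rho(x,\sigma)+\rho(\sigma,y)$, whence $\rho(\sigma,y)\geq\ell-\rho(x,\sigma)$ holds with no reference to $\mathbb{A}^{\mathcal{T}}_\delta(x,y)$ at all. The only real content is the equality clause, which I would extract from fact \ref{fact: LambdaPointTriangle}. Writing $\Lambda:=\Lambda(xy\sigma)$ and adding the two defining identities $\rho(x,\sigma)=\rho(x,\Lambda)+\rho(\Lambda,\sigma)$ and $\rho(y,\sigma)=\rho(y,\Lambda)+\rho(\Lambda,\sigma)$, together with $\rho(x,y)=\rho(x,\Lambda)+\rho(\Lambda,y)$, I obtain $\rho(x,\sigma)+\rho(\sigma,y)=\rho(x,y)+2\rho(\Lambda,\sigma)$. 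Hence equality in the first expression forces $\rho(\Lambda,\sigma)=0$, i.e. $\sigma=\Lambda(xy\sigma)\in[[x,y]]$, so that $x$, $y$ and $\sigma$ are colinear as claimed.

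For the second identity I would argue topologically. The trivial case $\tau=x$ gives $\rho(\tau,y)=\ell=\ell+\rho(\tau,x)$ immediately, so assume $\tau\neq x$. Since $\tau\in\mathcal{T}^{\mathbb{c}x}(y)=\mathcal{T}\setminus\mathcal{T}^x(y)$ and $\tau\neq x$, the point $\tau$ lies in some connected component of $\mathcal{T}\setminus\set{x}$ distinct from $\mathcal{T}^x(y)$, whereas $y\in\mathcal{T}^x(y)$. The geodesic image $[[\tau,y]]$ is connected, being the continuous image of an interval; were $x\notin[[\tau,y]]$, then $[[\tau,y]]$ would be a connected subset of $\mathcal{T}\setminus\set{x}$ meeting two distinct components, which is impossible. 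Therefore $x\in[[\tau,y]]$, and by the colinearity characterisation in fact \ref{fact: LambdaPointTriangle} (with $\Lambda(\tau y x)=x$) this yields $\rho(\tau,y)=\rho(\tau,x)+\rho(x,y)=\ell+\rho(\tau,x)$.

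The main obstacle is the second step, where the whole force of the argument rests on correctly exploiting the definition of $\mathbb{O}^{\mathcal{T}}_\delta(x,y)$ through connected components of $\mathcal{T}\setminus\set{x}$: it is precisely this that separates $\tau$ from $y$ by $x$ and hence forces every path between them, in particular the geodesic, to pass through $x$. I expect the hypothesis $\delta<\ell$ to play no essential role in either inequality beyond ensuring that the balls avoid $y$ and that the splitting of $\mathbb{B}^{\mathcal{T}}_\delta(x)$ into its ancestry and offspring parts is the intended one, so I would carry it along merely as a standing assumption inherited from the statement.
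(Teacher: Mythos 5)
Your proof is correct and follows essentially the same route as the paper: the first inequality and its equality clause both reduce to the $\Lambda(xy\sigma)$ decomposition of fact \ref{fact: LambdaPointTriangle} (your identity $\rho(x,\sigma)+\rho(\sigma,y)=\rho(x,y)+2\rho(\Lambda,\sigma)$ is exactly what the paper's colinear/non-colinear case analysis computes), while the second identity rests on the observation that $\tau\in\mathcal{T}^{\mathbb{c}x}(y)$ forces $x\in[[\tau,y]]$, which the paper asserts without justification and you supply via the connectedness of $[[\tau,y]]$. The only cosmetic differences are that you get the non-strict inequality directly from the triangle inequality and that you explicitly handle the (harmless) case $\tau=x$.
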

    \begin{proof}
        For any $\sigma\in \mathbb{A}_{\delta}^{\mathcal{T}}(x,y)$, suppose that $x$, $y$ and $\sigma$ are colinear; then $\sigma\in [[x,y]]$ and
        \begin{align}
            \ell=\rho(x,y)=\rho(x,\sigma)+\rho(\sigma,y)\nonumber
        \end{align}
        which proves the statement. Note that we do not have $y\in [[x,\sigma]]$ since then $\rho(x,\sigma)\geq \rho(x,y)=\ell>\delta$ whilst $x\in [[\sigma,y]]$ would imply that $\sigma\notin \mathcal{T}^{x}(y)$. But if $x$, $y$ and $\sigma$ are not colinear then we have a point $\Lambda(xy\sigma)\in [[x,y]]$ such that
        \begin{align}
            \rho(x,\sigma)=\rho(x,\Lambda(xy\sigma))+\rho(\Lambda(xy\sigma),\sigma) && \rho(\sigma,y)=\rho(\sigma,\Lambda(xy\sigma))+\rho(\Lambda(xy\sigma),y) && \ell=\rho(x,y)=\rho(x,\Lambda(xy\sigma))+\rho(\Lambda(xy\sigma),y)\nonumber
        \end{align}
        by fact \ref{fact: LambdaPointTriangle}. Then
        \begin{align}
            \rho(\sigma,y)=\rho(\sigma,\Lambda(xy\sigma))+\rho(\Lambda(xy\sigma),y) =\ell-\rho(x,\Lambda(xy\sigma))+\rho(\sigma,\Lambda(xy\sigma))>\ell-\rho(x,\Lambda(xy\sigma))-\rho(\sigma,\Lambda(xy\sigma))=\ell-\rho(x,\sigma)\nonumber
        \end{align}
        as required, where the inequality is strict by definiteness since $\sigma\notin [[x,y]]$ i.e. $\sigma\neq \Lambda(xy\sigma)$. The second expression follows immediately from the fact that if $\tau\in \mathcal{T}^{\mathbb{c}x}(y)$ then $x\in [[\tau,y]]$. 
    \end{proof}
    In fact we have a more refined result:
    \begin{proposition}\label{proposition: BallIntersection}
        Let $x,\:y\in \mathcal{T}$ be distinct and let $\ell=\rho(x,y)$. Then for any $\delta,\:\varepsilon>0$ such that $\delta+\varepsilon>\ell$ let 
        \begin{align}
            r=\frac{1}{2}(\delta+\varepsilon-\ell)
        \end{align}
        and let $u,\:v$ and $w$ denote respectively the unique points of $[[x,y]]$ such that $\rho(x,u)=\ell-\varepsilon$, $\rho(x,v)=(\ell-\varepsilon+\delta)/2$ and $\rho(x,w)=\delta$. Note that $\ell-\varepsilon<(\ell-\varepsilon+\delta)/2<\delta $ and
        \begin{align}
            \mathbb{B}^{\mathcal{T}}_{\delta}(x)\cap \mathbb{B}^{\mathcal{T}}_{\varepsilon}(y)=\bigcup_{\sigma\in [[u,w]]}\mathbb{B}_{\rho(u,\sigma)\land \rho(\sigma,w)}^{\mathcal{T}}(\sigma)=\mathbb{B}_{r}^{\mathcal{T}}(v).
        \end{align}
    \end{proposition}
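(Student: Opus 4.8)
The plan is to reduce every set-membership statement to a one-dimensional inequality by exploiting the tree structure, after first fixing the implicit nondegeneracy assumptions $\varepsilon\leq\ell$ and $\delta\leq\ell$ that are needed for $u$ and $w$ to lie on $[[x,y]]$. For an arbitrary $z\in\mathcal{T}$ I would introduce the branch point $\Lambda\coloneqq\Lambda(xyz)$ supplied by fact \ref{fact: LambdaPointTriangle} and set $s\coloneqq\rho(x,\Lambda)\in[0,\ell]$ and $t\coloneqq\rho(\Lambda,z)\geq 0$. The key preliminary observation is the decomposition
\[
\rho(c,z)=\rho(c,\Lambda)+t \qquad\text{for every }c\in[[x,y]],
\]
which expresses that the subtree carrying $z$ meets $[[x,y]]$ only at $\Lambda$. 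This follows from fact \ref{fact: LambdaPointTriangle}: since $[[x,z]]=[[x,\Lambda]]\oplus[[\Lambda,z]]$ and $[[y,z]]=[[y,\Lambda]]\oplus[[\Lambda,z]]$, any $c\in[[x,y]]=[[x,\Lambda]]\cup[[\Lambda,y]]$ has $\Lambda\in[[c,z]]$, whence the formula. Specialising $c\in\set{x,y,v}$ and writing $p\coloneqq\rho(x,v)$ gives the coordinate expressions $\rho(x,z)=s+t$, $\rho(y,z)=(\ell-s)+t$ and $\rho(v,z)=\abs{s-p}+t$.

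For the equality $\mathbb{B}^{\mathcal{T}}_{\delta}(x)\cap\mathbb{B}^{\mathcal{T}}_{\varepsilon}(y)=\mathbb{B}^{\mathcal{T}}_{r}(v)$ I would record the arithmetic identities $p-r=\ell-\varepsilon$ and $p+r=\delta$, equivalently the statement that $v$ is the midpoint of $[[u,w]]$ with $\rho(u,v)=\rho(v,w)=r$ and $\rho(u,w)=2r$. With these in hand, $z$ lies in the left-hand intersection iff $s+t<\delta$ and $(\ell-s)+t<\varepsilon$; rewriting these as $s<\delta-t$ and $\ell-\varepsilon+t<s$ and substituting $p+r=\delta$, $p-r=\ell-\varepsilon$ turns them into $p-r+t<s<p+r-t$, i.e. $\abs{s-p}+t<r$, which is precisely membership in $\mathbb{B}^{\mathcal{T}}_r(v)$. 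The joint constraint forces $t<r$, and for $t\geq r$ both descriptions are empty; every inequality stays strict, as required for open balls.

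For the remaining equality $\bigcup_{\sigma\in[[u,w]]}\mathbb{B}^{\mathcal{T}}_{\rho(u,\sigma)\land\rho(\sigma,w)}(\sigma)=\mathbb{B}^{\mathcal{T}}_r(v)$ I would argue the two inclusions separately. For $\subseteq$ the numerical input is that $\rho(v,\sigma)+\big(\rho(u,\sigma)\land\rho(\sigma,w)\big)=r$ for every $\sigma\in[[u,w]]$, checked in the two cases $\rho(u,\sigma)\leq r$ and $\rho(u,\sigma)\geq r$ using $\rho(u,w)=2r$; hence any $z$ with $\rho(\sigma,z)<\rho(u,\sigma)\land\rho(\sigma,w)$ satisfies $\rho(v,z)\leq\rho(v,\sigma)+\rho(\sigma,z)<r$ by the triangle inequality. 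For $\supseteq$, given $z\in\mathbb{B}^{\mathcal{T}}_r(v)$ I would take $\sigma\coloneqq\Lambda(uwz)\in[[u,w]]$, the projection of $z$ onto $[[u,w]]$, so that $h\coloneqq\rho(\sigma,z)$ is the height of $z$ above $[[u,w]]$ and $\rho(v,z)=\rho(v,\sigma)+h<r$. Writing $a\coloneqq\rho(u,\sigma)\in[0,2r]$ and $\rho(v,\sigma)=\abs{a-r}$, this reads $h<r-\abs{a-r}=\rho(u,\sigma)\land\rho(\sigma,w)$, so $z$ lies in the $\sigma$-ball of the union.

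There is no deep obstacle here: the entire content is the tree geometry, and once the decomposition $\rho(c,z)=\rho(c,\Lambda)+t$ is available each set becomes a band of admissible $(s,t)$ pairs and the $p\pm r$ identities do the rest. The step needing the most care is the reverse inclusion for the union, where one must choose the centre $\sigma=\Lambda(uwz)$ (rather than $v$ itself) and verify that its radius $\rho(u,\sigma)\land\rho(\sigma,w)$ is genuinely large enough — which is exactly where the midpoint relation $\rho(v,\sigma)=\abs{a-r}$ together with $r-\abs{a-r}=\rho(u,\sigma)\land\rho(\sigma,w)$ is used.
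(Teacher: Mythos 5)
Your proof is correct, and it uses the same core tools as the paper --- the branch point $\Lambda(\cdot\cdot\cdot)$ of fact \ref{fact: LambdaPointTriangle} and the midpoint identities $\rho(u,v)=\rho(v,w)=r$, $\rho(u,w)=2r$ --- but it is organised differently. The paper proves the three-way equality by a single cyclic chain of inclusions, $\mathbb{B}_\delta(x)\cap\mathbb{B}_\varepsilon(y)\subseteq\mathscr{B}\subseteq\mathbb{B}_r(v)\subseteq\mathbb{B}_\delta(x)\cap\mathbb{B}_\varepsilon(y)$, so that each set is only ever shown to be contained in the next; you instead prove the two equalities $\mathbb{B}_\delta(x)\cap\mathbb{B}_\varepsilon(y)=\mathbb{B}_r(v)$ and $\mathscr{B}=\mathbb{B}_r(v)$ separately. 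Your coordinatisation $\rho(c,z)=\rho(c,\Lambda)+t$ for $c\in[[x,y]]$ is a clean repackaging that turns the first equality into a genuine equivalence of band conditions $p-r+t<s<p+r-t\iff\abs{s-p}+t<r$, which is tidier than the paper's one-directional computations. The price is that you must prove $\mathbb{B}_r(v)\subseteq\mathscr{B}$ directly --- an inclusion the paper gets for free from the cycle --- and you do this correctly by projecting $z$ onto $[[u,w]]$ via $\sigma=\Lambda(uwz)$ and checking $h<r-\abs{a-r}=\rho(u,\sigma)\land\rho(\sigma,w)$. Your remark that one implicitly needs $\varepsilon\leq\ell$ and $\delta\leq\ell$ for $u$ and $w$ to lie on $[[x,y]]$ is a fair point the paper leaves unstated (and which is satisfied in every application of the proposition); it does not affect the validity of either argument.
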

    \begin{proof}
        $\ell-\varepsilon<(\ell-\varepsilon+\delta)/2<\delta$ is equivalent to $\ell<(\ell+\varepsilon+\delta)/2<\delta+\varepsilon$ and the desired inequalities immediately follow from $\ell<\delta+\varepsilon$. Now for notational convenience let us denote
        \begin{align}
            \mathscr{B}\coloneqq \bigcup_{\sigma\in [[u,w]]}\mathbb{B}_{\rho(u,\sigma)\land \rho(\sigma,w)}^{\mathcal{T}}(\sigma)\nonumber.
        \end{align}
        We show
        \begin{align}
            \mathbb{B}^{\mathcal{T}}_{\delta}(x)\cap \mathbb{B}^{\mathcal{T}}_{\varepsilon}(y)\subseteq \mathscr{B}\subseteq \mathbb{B}_{r}^{\mathcal{T}}(v)\subseteq \mathbb{B}^{\mathcal{T}}_{\delta}(x)\cap \mathbb{B}^{\mathcal{T}}_{\varepsilon}(y)\nonumber.
        \end{align}
        
        $\mathbb{B}^{\mathcal{T}}_{\delta}(x)\cap \mathbb{B}^{\mathcal{T}}_{\varepsilon}(y)\subseteq \mathscr{B}$: let $z\in \mathbb{B}^{\mathcal{T}}_{\delta}(x)\cap \mathbb{B}^{\mathcal{T}}_{\varepsilon}(y)$ and consider $\sigma_z=\Lambda(xyz)$ as per fact \ref{fact: LambdaPointTriangle}. We have
        \begin{align}
            \rho(x,z)=\rho(x,\sigma_z)+\rho(\sigma_z,z) && \rho(y,z)=\rho(y,\sigma_z)+\rho(\sigma_z,z) && \ell=\rho(x,y)=\rho(x,\sigma_z)+\rho(\sigma_z,y)\nonumber
        \end{align}
        by definition; since $\rho(y,\sigma_z)\leq \rho(y,z)<\varepsilon$ we have $\rho(x,\sigma_z)>\ell -\varepsilon$ and $u\in [[x,\sigma_z]]$ while $\rho(x,\sigma_z)\leq \rho(x,z)<\delta$ and $\sigma_z\in [[x,w]]$ i.e. $\sigma_z\in [[u,w]]$. But then since $\varepsilon>\rho(y,z)$ and $\sigma_z\in [[y,u]]$, $\rho(y,u)=\rho(y,x)-\rho(x,u)=\ell-(\ell-\varepsilon)=\varepsilon$ we have
        \begin{align}
            \varepsilon>\rho(y,\sigma_z)+\rho(\sigma_z,z)=\rho(y,u)-\rho(u,\sigma_z)+\rho(\sigma_z,z)=\varepsilon-\rho(u,\sigma_z)+\rho(\sigma_z,z)\nonumber
        \end{align}
        we have $\rho(\sigma_z,z)<\rho(u,\sigma_z)$. Similarly since $\delta>\rho(x,z)$ we have
        \begin{align}
            \delta>\rho(x,\sigma_z)+\rho(\sigma_z,z)=\rho(x,w)-\rho(w,\sigma_z)+\rho(\sigma_z,z)=\delta-\rho(w,\sigma_z)+\rho(\sigma_z,z)\nonumber
        \end{align}
        i.e. $\rho(\sigma_z,z)<\rho(w,\sigma_z)$. Thus $z\in \mathbb{B}_{\rho(\sigma_z,u)\land \rho(\sigma_z,w)}^{\mathcal{T}}(\sigma_z)\subseteq \mathscr{B}$ as required.

        $\mathscr{B}\subseteq\mathbb{B}_r^{\mathcal{T}}(v)$: let $z\in \mathscr{B}$, i.e. we have a $\sigma_z\in [[u,w]]$ such that $\rho(z,\sigma_z)<\rho(u,\sigma_z)\land \rho(w,\sigma_z)$. Now noting that $v\in [[u,w]]$ with 
        \begin{align}
            \rho(u,v)=\rho(x,v)-\rho(x,u)=\frac{1}{2}(\delta+\varepsilon-\ell)=r && \rho(w,v)=\rho(x,w)-\rho(x,v)=\frac{1}{2}(\delta+\varepsilon-\ell)=r\nonumber
        \end{align}
        we see in particular that $\rho(u,v)=\rho(w,v)=r$ and $\rho(u,w)=2r$. Then
        \begin{align}
            \rho(v,z)=\rho(v,\sigma_z)+\rho(\sigma_z,z)<\rho(v,\sigma_z)+\rho(u,\sigma_z)\land \rho(w,\sigma_z)\nonumber;
        \end{align}
        if $\sigma_z\in [[u,v]]$ then $\rho(u,\sigma_z)\leq r\leq 2r-\rho(u,\sigma_z)=\rho(u,w)-\rho(u,\sigma_z)=\rho(\sigma_z,w)$ and
        \begin{align}
            \rho(v,z)<\rho(v,\sigma_z)+\rho(u,\sigma_z)=\rho(u,v)=r\nonumber.
        \end{align}
        Similarly if $\sigma_z\in [[v,w]]$ then $\rho(w,\sigma_z)\leq \rho(u,\sigma_z)$ and $\rho(v,z)<\rho(v,\sigma_z)+\rho(w,\sigma_z)=\rho(w,v)=r$. Thus $z\in \mathbb{B}_{r}^{\mathcal{T}}(v)$ and $\mathscr{B}\subseteq \mathbb{B}_{r}^{\mathcal{T}}(v)$ as required.

        $\mathbb{B}_{r}^{\mathcal{T}}(v)\subseteq \mathbb{B}_{\delta}^{\mathcal{T}}(x)\cap \mathbb{B}_{\varepsilon}^{\mathcal{T}}(y)$: let $z\in \mathbb{B}^{\mathcal{T}}_r(v)$. By subadditivity we have
        \begin{align}
            \rho(x,z)&\leq \rho(x,v)+\rho(v,z)=\frac{1}{2}(\ell-\varepsilon+\delta)+\rho(v,z)<\frac{1}{2}(\ell-\varepsilon+\delta)+r=\frac{1}{2}(\ell-\varepsilon+\delta)+\frac{1}{2}(\varepsilon+\delta-\ell)=\delta\nonumber\\
            \rho(y,z)&\leq \rho(y,v)+\rho(v,z)=\frac{1}{2}(\ell+\varepsilon-\delta)+\rho(v,z)<\frac{1}{2}(\ell+\varepsilon-\delta)+r=\frac{1}{2}(\ell+\varepsilon-\delta)+\frac{1}{2}(\varepsilon+\delta-\ell)=\varepsilon\nonumber
        \end{align}
        as required.
    \end{proof}
    We will also need the following result which essentially states that a mapping of a particular form is $1$-Lipschitz, i.e. satisfies
    \begin{align}
        \rho(f(x),f(y))\leq \rho(x,y)
    \end{align}
    for all $x$ and $y$ in the domain of $f$:
    \begin{proposition}\label{proposition: LipschitzFunction}
        Let $(\mathcal{T},\rho)$ be a continuum tree. For any distinct $x,\:y\in \mathcal{T}$ define the mapping
        \begin{align}
            f_{x,y}:\mathcal{T}\rightarrow \mathbb{R} && f_{x,y}:\sigma\mapsto f_{x,y}(\sigma)=\left\{\begin{array}{rl}
                \rho(\sigma,y), &  \sigma\in \mathcal{T}^{y}(x)\\
                -\rho_{\mathbb{e}}(\sigma,y), & \sigma\in \mathcal{T}^{\mathbb{c}y}(x)
            \end{array}\right.\nonumber.
        \end{align}
        $f_{x,y}$ is $1$-Lipschitz for all $(x,y)\in \mathcal{T}\times \mathcal{T}\backslash \triangle_{\mathcal{T}}$.
    \end{proposition}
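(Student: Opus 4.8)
The plan is to establish the Lipschitz bound $\abs{f_{x,y}(\sigma)-f_{x,y}(\tau)}\leq \rho(\sigma,\tau)$ directly, by splitting into cases according to which of the two complementary sets $\mathcal{T}^y(x)$ and $\mathcal{T}^{\mathbb{c}y}(x)$ each of $\sigma$ and $\tau$ belongs to. Since these two sets partition $\mathcal{T}$, there are---up to the obvious symmetry $\sigma\leftrightarrow\tau$---only three cases to treat.

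First I would dispatch the two ``same-side'' cases. If both $\sigma,\tau\in\mathcal{T}^y(x)$, or both $\sigma,\tau\in\mathcal{T}^{\mathbb{c}y}(x)$, then $f_{x,y}(\sigma)$ and $f_{x,y}(\tau)$ are assigned the same sign, so that $\abs{f_{x,y}(\sigma)-f_{x,y}(\tau)}=\abs{\rho(\sigma,y)-\rho(\tau,y)}$ in both instances; the reverse triangle inequality then yields the bound $\rho(\sigma,\tau)$ with no further work.

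The substantive case is the ``opposite-side'' one, say $\sigma\in\mathcal{T}^y(x)$ and $\tau\in\mathcal{T}^{\mathbb{c}y}(x)$. Here the two function values carry opposite signs, so $\abs{f_{x,y}(\sigma)-f_{x,y}(\tau)}=\rho(\sigma,y)+\rho(\tau,y)$, and the claim reduces to showing $\rho(\sigma,y)+\rho(\tau,y)\leq\rho(\sigma,\tau)$. The crux of the whole argument is the assertion that $y\in[[\sigma,\tau]]$. If $\tau=y$ this is immediate; otherwise $\sigma$ and $\tau$ lie in distinct connected components of $\mathcal{T}\backslash\set{y}$, by the very definition of $\mathcal{T}^y(x)$ and $\mathcal{T}^{\mathbb{c}y}(x)$. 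Since the geodesic image $[[\sigma,\tau]]$ is the continuous image of an interval and hence connected, were it to avoid $y$ it would be a connected subset of $\mathcal{T}\backslash\set{y}$ containing both $\sigma$ and $\tau$, forcing these points into a single component---a contradiction. Thus $y\in[[\sigma,\tau]]$, so $\sigma$, $y$ and $\tau$ are colinear with $y$ in the middle and $\rho(\sigma,\tau)=\rho(\sigma,y)+\rho(y,\tau)$; this in fact realises the bound with equality.

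The only real obstacle is this separation step, and it rests entirely on the defining property of continuum trees---that geodesic images are the codomains of \emph{all} continuous injective curves between their endpoints---together with the elementary connectedness of $[[\sigma,\tau]]$. Everything else is just the reverse triangle inequality applied twice. I would therefore isolate the separation claim (\emph{points in different components of $\mathcal{T}\backslash\set{y}$ have $y$ on their connecting geodesic}) as a one-line observation, possibly folding it into the use of the colinearity characterisation already recorded in fact \ref{fact: LambdaPointTriangle}, and then assemble the three cases into the stated bound.
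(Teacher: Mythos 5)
Your proposal is correct and follows essentially the same route as the paper's proof: the two same-side cases via the reverse triangle inequality, and the opposite-side case by observing that $y\in[[\sigma,\tau]]$ so the bound holds with equality. The only difference is that you spell out the connectedness argument for why $y$ must lie on the geodesic, a step the paper simply asserts, so your write-up is if anything slightly more complete.
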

    \begin{proof}
        By construction if $\sigma,\:\tau\in \mathcal{T}^{y}(x)$ or $\sigma,\:\tau\notin \mathcal{T}^{y}(x)$ we have
        \begin{align}
            \vert f_{x,y}(\sigma)-f_{x,y}(\tau)\vert=\vert\rho(\sigma,y)-\rho(\tau,y)\vert\nonumber.
        \end{align}
        But by the subadditivity of $\rho$ we have
        \begin{align}
            \rho(\sigma,y)&\leq \rho(\sigma,\tau)+\rho(\tau,y) & 
            \rho(\tau,y)&\leq \rho(\sigma,\tau)+\rho(\sigma,y)\nonumber
        \end{align}
        so
        \begin{align}
            \rho(\sigma,y)-\rho(\tau,y)\leq \rho(\sigma,\tau) && \rho(\tau,y)-\rho(\sigma,y)\leq \rho(\sigma,\tau)\nonumber
        \end{align}
        i.e. $\vert\rho(\sigma,y)-\rho(\tau,y)\vert\leq \rho(\sigma,\tau)$ as required. The remaining case is given by $\sigma\in \mathcal{T}^{y}(x)$ and $\tau\in \mathcal{T}^{\mathbb{c}y}(x)$. Then $y\in [[\sigma,\tau]]$ so $\rho(\sigma,\tau)=\rho(\sigma,y)+\rho(y,\tau)$ and
        \begin{align}
            \vert f_{x,y}(\sigma)-f_{x,y}(\tau)\vert=\rho(\sigma,y)+\rho(\tau,y)=\rho(\sigma,\tau)\nonumber
        \end{align}
        as required.
    \end{proof}
    \subsection{Brownian Excursions and the Brownian Continuum Random Tree}
    We now rapidly review the theory of Brownian excursions. First recall:
    \begin{definition}
        An \textit{excursion} is a continuous mapping $E:[0,\infty)\rightarrow [0,\infty)$ such that there is a $T\in (0,\infty)$ called the \textit{lifetime} of the excursion such that
        \begin{align}
            E(0)=E(s)=0 && E(t)>0
        \end{align}
        for all $s\geq T$ and all $t\in (0,T)$. The set of all excursions of lifetime $T$ is denoted $\mathscr{E}^{(T)}$ and we define the set of all excursions
        \begin{align}
            \mathscr{E}\coloneqq \bigcup_{T\in (0,\infty)}\mathscr{E}^{(T)}.
        \end{align}
        $\mathscr{E}$ will be equipped with the topology of compact convergence i.e. uniform convergence on compact subsets of the domain. We will often denote the lifetime of an excursion $E\in \mathscr{E}$ by $E(T)$ and also define
        \begin{align}
            H(E)=\sup E([0,\infty)).
        \end{align}
        Note that $H(E)$ is finite since $H=\sup E([0,T(E)])$ with $[0,T(E)]$ compact (bounded).
    \end{definition}
    Following It\^{o} \cite{Ito-PointProcesses} we may regard a standard linear Brownian motion  (more generally any recurrent Markov process) $\mathbb{w}$ as a Poisson point process in the infinite dimensional space $(0,\infty)\times \mathscr{E}$ with intensity $\lambda\times \nu$ where $\nu$ is a $\sigma$-finite Borel measure on $\mathscr{E}$ called the \textit{excursion measure}; essentially the idea is that the set
    \begin{align}
        \mathbb{w}^{-1}(\set{0})=\set{t\in [0,\infty):\mathbb{w}_t=0}
    \end{align}
    is closed and so its complement can be expressed as a union of disjoint open intervals (note that the set contains $0$) on which $\mathbb{w}$ has constant sign; that is to say $\vert\mathbb{w}\vert $ essentially defines an excursion on each such interval which we call an \textit{excursion interval}. Moreover, $\mathbb{w}^{-1}(\set{0})$ is the growth set for the local time process associated to $\mathbb{w}$ and so the local time is constant on each excursion interval. As such Brownian motion naturally defines a point process in the space $(0,\infty)\times \mathscr{E}$ where a pair $(L,E)$ is chosen according to the point process iff the excursion $E$ appears in the Brownian motion at the local time $L$, i.e. the local time on the excursion interval associated to $E$ is $L$. It\^{o}'s result is then essentially a consequence of the universality of Poisson processes amongst counting processes and the strong Markov property of Brownian motion at the the times in $\mathbb{w}^{-1}(\set{0})$. While the Poisson nature of the point process of excursions is of fundamental significance, much of the power of excursion theory additionally comes from the fact that several important properties can be drawn about $\nu$ \textit{a priori}. For more detailed summaries see e.g. Refs. \cite{Rogers-GuidedTourExcursions,PitmanYor-Excursion} and Ref. \cite[Chapter XII]{RevuzYor-ContinuousMartingalesBrownianMotion} for a fairly comprehensive technical summary of the Brownian excursion theory. For our purposes we will need the following key facts:
    \begin{enumerate}
        \item $\nu^{(1)}\coloneqq \nu(\cdot \vert T(E)=1)$ is a probability measure on $\mathscr{E}^{(1)}$.
        \item For each $\alpha\in (0,\infty)$, define the scaling transformation
        \begin{align}
            \Lambda_\alpha:\mathscr{E}\rightarrow \mathscr{E} && \Lambda_\alpha:E\mapsto \left(\Lambda_\alpha E:s\mapsto \sqrt{\alpha} E\left(\frac{s}{\alpha}\right)\right)
        \end{align}
        which clearly defines a measurable mapping $\mathscr{E}^{(1)}\rightarrow \mathscr{E}^{(\alpha)}$ for each $\alpha$. Then for each  $\alpha\in (0,\infty)$ define the probability measure
        \begin{align}
            \nu^{(\alpha)}\coloneqq (\Lambda_\alpha)_*\nu^{(1)}
        \end{align}
        on $\mathscr{E}^{(\alpha)}$. We have the following integral representation of $\nu$:
        \begin{align}
            \nu(\mathcal{E})=\frac{1}{\sqrt{2\pi}}\int^\infty_0\d\lambda(s) s^{-\frac{3}{2}}\nu^{(s)}(\mathcal{E}\cap \mathscr{E}^{(s)})
        \end{align}
        for all measurable $\mathcal{E}\subseteq\mathscr{E}$.
    \end{enumerate}
    We will be particularly interested in normalised Brownian excursions:
    \begin{definition}
        A \textit{normalised Brownian excursion} is a random element of $\mathscr{E}^{(1)}$, the space of excursions with lifetime $1$, with law $\nu^{(1)}$. We shall often denote a normalised Brownian excursion as the stochastic process $\mathbb{e}=\set{\mathbb{e}_t}_{t\in [0,\infty)}$ where $\mathbb{e}_t$ denotes the value of the excursion $\mathbb{e}$ at the time $t\in [0,\infty)$. 
    \end{definition}
    The point is that due to the scaling property of excursions listed above, the choice of normalisation is essentially conventional; in particular to prove properties about normalised Brownian excursions it is often sufficient to prove properties about general excursions chosen according to the excursion measure $\nu$ and rescaling.

    Our interest in Brownian excursions comes from the following fact:
    \begin{fact}\label{fact: ExcursionPseudometric}
        Let $E$ be an excursion of lifetime $T>0$ and define
        \begin{align}
            \rho_E:[0,T]\times [0,T]\rightarrow [0,\infty) && \rho_E(s,t)=E(s)+E(t)-2\land E
            [s\land t,s\lor t]
        \end{align}
        where we use the shorthand $E [s\land t,s\lor t]\coloneqq E([s\land t,s\lor t])$. $\rho_E$ is a pseudometric on $[0,T]$. We denote the Kolmogorov quotient of $([0,T],\rho_E)$ by $(\mathcal{T}_E,\rho_E)$; $(\mathcal{T}_E,\rho_E)$ is a continuum tree, called the \emph{continuum tree encoded by $E$}. The quotient map will be denoted by $\mathfrak{q}_E$ or simply $\mathfrak{q}$ if there is no room for confusion. 
    \end{fact}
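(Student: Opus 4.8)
The plan is to verify first that $\rho_E$ is a pseudometric, and then, after passing to the Kolmogorov quotient, that the resulting metric space satisfies the two defining properties of a continuum tree. Throughout I write $m(s,t)\coloneqq \land E[s\land t,s\lor t]$ for the minimum of $E$ over the interval spanned by $s$ and $t$. Symmetry and $\rho_E(s,s)=0$ are immediate, and non-negativity follows because both $s$ and $t$ lie in $[s\land t,s\lor t]$, so that $m(s,t)\le E(s)\land E(t)$ and hence $\rho_E(s,t)=E(s)+E(t)-2m(s,t)\ge 0$. The only substantial axiom is the triangle inequality $\rho_E(s,u)\le \rho_E(s,t)+\rho_E(t,u)$.

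To prove it I would assume without loss of generality that $s\le u$ and split on the position of $t$. If $t\in[s,u]$ then $[s,u]$ is the union of the abutting intervals $[s,t]$ and $[t,u]$, so $m(s,u)=m(s,t)\land m(t,u)$, and the claim reduces to $m(s,t)\lor m(t,u)\le E(t)$, which holds because $t$ lies in both $[s,t]$ and $[t,u]$. If instead $t<s$ (the case $t>u$ being symmetric) then $[t,u]=[t,s]\cup[s,u]$ gives $m(t,u)=m(s,t)\land m(s,u)$, and the claim reduces to $m(s,t)+m(t,u)-m(s,u)\le E(t)$; this I would dispatch in the two subcases according to which of $m(s,t),m(s,u)$ attains the minimum, in each case bounding the left-hand side by $m(s,t)\le E(t)$. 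For the quotient, continuity of $\mathfrak{q}$ follows from continuity of $E$ together with the fact that the minimising interval shrinks to $\set{s}$ as $s'\to s$, forcing $\rho_E(s,s')\to 0$; thus $\mathcal{T}_E=\mathfrak{q}([0,T])$ is the continuous image of a compact interval, hence a compact (so complete and separable) metric space, while by construction $\rho_E$ separates points of the Kolmogorov quotient. Taking $\mathfrak{q}(0)$ as root, note $\rho_E(0,t)=E(t)$ since $E(0)=\land E[0,t]=0$, so $E$ serves as a height function.

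It then remains to exhibit the tree structure. For $s\le t$ choose $t^*\in[s,t]$ with $E(t^*)=m(s,t)$; a direct computation gives $\rho_E(s,t^*)=E(s)-m(s,t)$ and $\rho_E(t^*,t)=E(t)-m(s,t)$, so $\mathfrak{q}(t^*)$ splits $\rho_E(s,t)$ additively and plays the role of the most recent common ancestor. I would build an explicit geodesic by a level-set parametrisation: for $h$ decreasing from $E(s)$ to $m(s,t)$ set $s_h=\sup\set{r\in[t^*,s]:E(r)\le h}$, and check that $E(s_h)=h$ and $m(s_h,s)=h$, whence $\rho_E(s_{h_1},s_{h_2})=\abs{h_1-h_2}$. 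This yields an isometric (hence continuous) path from $\mathfrak{q}(s)$ down to $\mathfrak{q}(t^*)$; concatenating it with the symmetric ascending path to $\mathfrak{q}(t)$ gives a geodesic of length $\rho_E(s,t)$, since the interval spanned by a descending point and an ascending point contains $t^*$ and therefore has $E$-minimum $m(s,t)$. Thus $\mathcal{T}_E$ is geodesic. Finally, uniqueness of geodesics and the property that the image of every injective continuous curve between two points equals $[[x,y]]$ I would extract from the four-point ($0$-hyperbolicity) condition for $\rho_E$, established by the same bookkeeping of minima over abutting subintervals used for the triangle inequality: $0$-hyperbolicity forces every geodesic triangle to degenerate to a tripod, which rules out any competing injective path and forces the two endpoints of each geodesic to determine it.

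The pseudometric axioms and the compactness of the quotient are routine; the genuine obstacle is the final paragraph, namely the explicit geodesic construction via level sets and, above all, the passage from $0$-hyperbolicity to uniqueness and to the injective-curve property. This last step is the metric-encoding analogue of the thin-triangle structure of fact \ref{fact: LambdaPointTriangle}, which here must be re-derived from the encoding function $E$ rather than assumed.
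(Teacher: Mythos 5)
The paper does not prove this fact at all: its ``proof'' is a citation to \cite[theorem 2.2]{LeGall-RandomTreesApplications}, and your sketch is a correct reconstruction of exactly that standard argument (pseudometric axioms by bookkeeping of interval minima, level-set geodesics through the point $t^*$ realising $\land E[s\land t, s\lor t]$, then the four-point condition plus connectedness to conclude the space is a real tree). The only points to tidy are that $s_h$ should be the \emph{first} hitting time of level $h$ when moving from $s$ towards $t^*$ (so that $E\geq h$ on the traversed segment and $m(s,s_h)=h$), and that the final step --- a connected, geodesic, $0$-hyperbolic space is an $\mathbb{R}$-tree --- is itself a nontrivial standard theorem that you, like Le Gall, would ultimately be citing rather than proving.
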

    \begin{proof}
        See \cite[theorem 2.2]{LeGall-RandomTreesApplications}.
    \end{proof}
    Note that this encoding extends to an encoding of \textit{rooted} continuum trees $(\mathcal{T}_E,\rho_E,\boldsymbol{rt}_E)$ where $\boldsymbol{rt}_E=\mathfrak{q}(0)$. We wish to use this encoding to transfer the Brownian excursion measure $\nu$ to the set of trees $\mathbb{T}$. The latter will be equipped with the Gromov-Hausdorff topology or some natural modification when we have additional structure (root vertex/measure) \cite{BuragoBuragoIvanov-MetricGeometry,Gromov-MetricStructures,AbrahamDelmasHoscheit-NoteGromovHausdorffProkhorov}. Let us briefly recall the relevant concepts here: the Gromov-Hausdorff distance between two compact metric spaces $X$ and $Y$ is obtained by taking the infimum of the Hausdorff distance between $X$ and $Y$ regarded as compact subsets of some ambient space $Z$, that is the infimum ranges over all isometric imbeddings of $X$ and $Y$ into arbitrary metric spaces $Z$. This defines a metric on the space of isometry classes of compact metric spaces. The minimisation problem characterising the Gromov-Hausdorff distance is very difficult \textit{a priori}---the class of all triples $(Z,\iota_X,\iota_Y)$ where $Z$ is an ambient metric space and $\iota_X$ and $\iota_Y$ are isometric imbeddings of $X$ and $Y$ into $Z$ respectively is in general proper---and so it is often convenient to have an alternative characterisation. At a qualitative level, i.e. for questions regarding the topology induced by the Gromov-Hausdorff distance, it is often more convenient to work with \textit{near isometries}. In particular given metric spaces $(X,\rho_X)$ and $(Y,\rho_Y)$ and any mapping $f:X\rightarrow Y$ we define the \textit{distortion} of $f$ via
    \begin{align}
        \text{dis}(f)=\sup_{(x_1,x_2)\in X\times X}\vert \rho_X(x_1,x_2)-\rho_Y(f(x_1),f(x_2))\vert.
    \end{align}
    For any $\varepsilon>0$ a mapping $f:X\rightarrow Y$ is said to be an \textit{$\varepsilon$-isometry} iff $\text{dis}(f)\leq \varepsilon$ and for any $y\in Y$ there is an $x\in X$ such that $y\in \mathbb{B}^Y_{\varepsilon}(f(x))$. Roughly speaking $\varepsilon$-isometries do not modify distances more than an amount $\varepsilon$ and are surjective up to an error $\varepsilon$. The key point for our purposes is that if there is an $\varepsilon$-isometry $f:X\rightarrow Y$ then $\rho_{GH}(X,Y)<2\varepsilon$ where $\rho_{GH}$ denotes the Gromov-Hausdorff distance.
    \begin{fact}\label{fact: ContinuityEncodingMap}
        The encoding map $\text{enc}:E\mapsto \text{isom}(\mathcal{T}_E,\rho_E)$ is continuous when $\mathscr{E}$ is equipped with the topology of compact convergence and $\mathbb{T}$ is equipped with the Gromov-Hausdorff topology; note that for any metric space $(X,\rho_X)$, $\text{isom}(X,\rho_X)$ denotes its isometry class.
    \end{fact}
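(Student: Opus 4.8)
The plan is to prove the quantitative bound $\rho_{GH}(\mathcal{T}_E,\mathcal{T}_{E'})< 8\norm{E-E'}_\infty$, from which continuity of $\text{enc}$ is immediate, since $\rho_{GH}(\mathcal{T}_{E_n},\mathcal{T}_E)\to 0$ whenever $\norm{E_n-E}_\infty\to 0$ on the relevant part of the domain. The mechanism is to compare the two tree pseudometrics through the \emph{shared time parameter} $t$ and then feed the resulting estimate into the $\varepsilon$-isometry criterion $\rho_{GH}(X,Y)<2\varepsilon$ recalled above. First I would fix a common domain $[0,M]$ containing the lifetimes of both $E$ and $E'$, extending each excursion by zero past its lifetime; this only adjoins times that the pseudometric collapses onto the root $\mathfrak{q}(0)$, so it leaves the encoded tree unchanged, and we may regard $\rho_E$ and $\rho_{E'}$ as pseudometrics on the single interval $[0,M]$.

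The analytic core is a pointwise comparison of the two pseudometrics. Writing $m_E(s,t)\coloneqq \land E[s\land t,s\lor t]$ for the interval minimum, the defining formula of fact \ref{fact: ExcursionPseudometric} gives, for all $s,t\in[0,M]$,
\[
\abs{\rho_E(s,t)-\rho_{E'}(s,t)}\le \abs{E(s)-E'(s)}+\abs{E(t)-E'(t)}+2\abs{m_E(s,t)-m_{E'}(s,t)}\le 4\norm{E-E'}_\infty,
\]
where the final step uses that the minimum of a function over a fixed interval changes by at most the uniform distance between the functions, so that $\abs{m_E(s,t)-m_{E'}(s,t)}\le \norm{E-E'}_\infty$. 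Thus the time parameter induces a comparison between the two pseudometric spaces with discrepancy at most $4\norm{E-E'}_\infty$.

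Next I would push this estimate through the Kolmogorov quotients. For each $a\in\mathcal{T}_E$ choose a representative time $t_a\in \mathfrak{q}_E^{-1}(a)$ and define $f:\mathcal{T}_E\to\mathcal{T}_{E'}$ by $f(a)\coloneqq \mathfrak{q}_{E'}(t_a)$. Since $\rho_E(\mathfrak{q}_E(s),\mathfrak{q}_E(t))=\rho_E(s,t)$ and likewise for $E'$, the displayed bound yields $\abs{\rho_{E'}(f(a),f(b))-\rho_E(a,b)}\le 4\norm{E-E'}_\infty$, i.e. $\text{dis}(f)\le 4\norm{E-E'}_\infty$. For approximate surjectivity, given $b'\in\mathcal{T}_{E'}$ pick $s$ with $\mathfrak{q}_{E'}(s)=b'$ and set $a=\mathfrak{q}_E(s)$; then $\rho_E(t_a,s)=0$ because both times represent $a$, so the same bound forces $\rho_{E'}(f(a),b')=\rho_{E'}(t_a,s)\le 4\norm{E-E'}_\infty$, i.e. $b'\in\mathbb{B}^{\mathcal{T}_{E'}}_{4\norm{E-E'}_\infty}(f(a))$. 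Hence $f$ is a $4\norm{E-E'}_\infty$-isometry and $\rho_{GH}(\mathcal{T}_E,\mathcal{T}_{E'})<8\norm{E-E'}_\infty$, as desired.

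The hard part is not the estimate but the bookkeeping around differing lifetimes. The clean comparison above needs the two excursions to be controlled uniformly on a genuinely common interval running up to the \emph{larger} of the two lifetimes; bounding the tail of the longer excursion is the delicate point, since an excursion of small height that persists out to a large time can still generate a branch of macroscopic length attached near the root. This is precisely what forces the convergence to be quantified uniformly up to the relevant lifetime, so that any branch created beyond the shorter lifetime has height, and therefore diameter, governed by $\norm{E-E'}_\infty$ and hence vanishes in the limit; verifying that the stated topology supplies exactly this uniform control is the step I would treat most carefully.
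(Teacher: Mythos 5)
Your argument is essentially the paper's own proof: the paper also compares $\rho_E$ and $\rho_{\tilde E}$ through the shared time parameter, builds the map $\mathfrak{q}_E(s)\mapsto \mathfrak{q}_{\tilde E}(\land\mathfrak{q}_E(s))$ (a particular choice of your representative $t_a$), obtains distortion and surjectivity defect $4\delta$, and concludes $\rho_{GH}<8\delta$ via the $\varepsilon$-isometry criterion. Your pseudometric estimate, including the observation that an interval infimum moves by at most $\norm{E-E'}_\infty$, is correct and if anything cleaner than the paper's appeal to subadditivity.

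The concern you flag in your last paragraph is genuine, and the paper does not really dispose of it: it asserts that every net converging compactly to $E$ is eventually in the uniform ball $U_\delta(E)$ ``since excursions have compact domains,'' but excursions are defined on $[0,\infty)$ and compact convergence does not control the tail. Indeed, a sequence $E_n$ agreeing with $E$ up to time $T_E$, staying positive but tiny out to time $n$, and then rising to a fixed height $c$ before dying, converges to $E$ compactly while $\mathcal{T}_{E_n}$ acquires a branch of length $\approx c$ at the root, so $\text{enc}$ is \emph{not} continuous on all of $\mathscr{E}$ with this topology. The statement (and both proofs) are rescued exactly where they are needed: on $\mathscr{E}^{(1)}$ (or any set of excursions with uniformly bounded lifetimes) compact convergence coincides with uniform convergence on $[0,\infty)$, since all the excursions vanish beyond a common time, and this is the only case used to define $\Theta=\text{enc}_*\nu^{(1)}$. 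So your instinct to treat that step most carefully is correct; the resolution is to restrict the domain, not to refine the estimate.
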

    \begin{proof}
        For any excursion $E\in \mathscr{E}$ and any $\delta>0$ consider the set
        \begin{align}
            U_\delta(E)\coloneqq \Set{\tilde{E}\in \mathscr{E}:\sup_{t\in [0,\infty)}\vert E(t)-\tilde{E}(t)\vert<\delta}\nonumber.
        \end{align}
        Obviously every net of excursions which converges to $E$ is eventually in $U_{\delta}(E)$ since excursions have compact domains; moreover for any $\tilde{E}\in U_{\delta}(E)$ we have
        \begin{align}
            \sup_{t\in [0,\infty)}\vert E(t)-\tilde{E}(t)\vert<\tilde{\delta}\nonumber
        \end{align}
        for some $\tilde{\delta}<\delta$. Thus taking $\epsilon<\delta-\tilde{\delta}$ we note that $U_\varepsilon(\tilde{E})\subseteq U_\delta(E)$ since
        \begin{align}
            \sup_{t\in [0,\infty)}\vert E(t)-\bar{E}(t)\vert \leq \sup_{t\in [0,\infty)}\vert E(t)-\tilde{E}(t)\vert +\sup_{t\in [0,\infty)}\vert \tilde{E}(t) -\bar{E}(t)\vert<\tilde{\delta}+\epsilon<\delta\nonumber
        \end{align}
        for all $\bar{E}\in U_\varepsilon(\tilde{E})$; but then every net which converges to some $\tilde{E}\in U_\delta(E)$ is eventually in $U_{\epsilon}(\tilde{E})\subseteq U_\delta(E)$ and $U_\delta(E)$ is open.
        
        Now for any $E\in \mathscr{E}$ and any $\tilde{E}\in U_\delta(E)$ we can define a mapping 
        \begin{align}
            \mathcal{T}_E\rightarrow \mathcal{T}_{\tilde{E}} && \mathfrak{q}_E(s) \mapsto \mathfrak{q}_{\tilde{E}}(\land\mathfrak{q}_E(s))\nonumber.
        \end{align}
        Using the fact that for all $\mathfrak{q}_E(s)\in \mathcal{T}_E$ we have
        \begin{align}
            \tilde{E}_{\land \mathfrak{q}_E(s) }\in (E_s-\delta,E_s+\delta)\nonumber
        \end{align}
        for all $\tilde{E}\in U_{\delta}(E)$, it is a simple consequence of subadditivity that
        \begin{align}
            \vert \rho_{E}(\mathfrak{q}_E(s) ,\mathfrak{q}_E(t))-\rho_{\tilde{E}}(\mathfrak{q}_{\tilde{E}}(\land \mathfrak{q}_E(s)),\mathfrak{q}_{\tilde{E}}(\land\mathfrak{q}_E(t)))\vert\nonumber<4\delta\nonumber. 
        \end{align}
        Also
        \begin{align}
            \rho_{\tilde{E}}(\mathfrak{q}_{\tilde{E}}(s),\mathfrak{q}_{\tilde{E}}(\land\mathfrak{q}_E(\land \mathfrak{q}_{\tilde{E}}(s))))=\tilde{E}_{s}+\tilde{E}_{\land \mathfrak{q}_E(\land \mathfrak{q}_{\tilde{E}}(s))}-2\land \tilde{E}[\land\mathfrak{q}_E(\land\mathfrak{q}_{\tilde{E}}(s)),s]<4\delta\nonumber
        \end{align}
        and the mapping $\mathfrak{q}_E(s)\mapsto \mathfrak{q}_{\tilde{E}}(\land \mathfrak{q}_E(s))$ is a $4\delta$-isometry of $\mathcal{T}_E$ into $\mathcal{T}_{\tilde{E}}$: hence we have that the Gromov-Hausdorff distance $\rho_{GH}(\mathcal{T}_E,\mathcal{T}_{\tilde{E}})<8\delta$ and so choosing $\delta<\varepsilon/8$ for each $\varepsilon>0$ shows that the encoding map is continuous since the $\varepsilon>0$ ball centred at $T_E$ contains the image of the open set $U_\delta(E)$ with respect to the encoding map.
    \end{proof}
    \begin{definition}
        The \textit{Brownian continuum random tree} is the random element $(\mathcal{T}_{\mathbb{e}},\rho_{\mathbb{e}})$ of $\mathbb{T}$ with law
        \begin{align}
            \text{law}_{(\mathcal{T}_{\mathbb{e}},\rho_{\mathbb{e}})}=\Theta && \Theta\coloneqq \text{enc}_*\nu^{(1)},
        \end{align}
        i.e. it is the random continuum tree encoded by the (random) normalised Brownian excursion $\mathbb{e}$. The \textit{rooted Brownian continuum random tree} is the random triple $(\mathcal{T}_{\mathbb{e}},\rho_{\mathbb{e}},\boldsymbol{rt}_{\mathbb{e}})$ where $\boldsymbol{rt}_{\mathbb{e}}=\mathbb{e}(0)$; strictly this is the random element of the set
        \begin{align}
            \mathbb{T}_{\boldsymbol{rt}}=\bigcup_{(\mathcal{T},\rho)\in \mathbb{T}}\set{(\mathcal{T},\rho)}\times\mathcal{T}\nonumber
        \end{align}
        with law $\Theta_{\boldsymbol{rt}}$ defined via
        \begin{align}
            (\pi_{12})_*\Theta_{\boldsymbol{rt}}=\Theta && \Theta_{\boldsymbol{rt}}(\cdot \vert \pi_{12}(\mathcal{T}_{\mathbb{e}},\rho_{\mathbb{e}},\boldsymbol{rt}_{\mathbb{e}})=(\mathcal{T}_E,\rho_E))=\lambda_E
        \end{align}
        where $\pi_{12}:(x_1,x_2,x_3)\mapsto (x_1,x_2)$ is the natural projection onto the first two elements. Henceforth we let $\mathbb{E}$ and $\mathbb{E}_{\boldsymbol{rt}}$ denote expectations with respect to the measures $\Theta$ and $\Theta_{\boldsymbol{rt}}$ respectively.
    \end{definition}
    Note that on occasion it will be convenient to regard the unnormalised analogue of the above which we shall also, somewhat carelessly call the Brownian continuum tree. Essentially the rooted Brownian continuum tree is obtained by picking a Brownian continuum tree and uniformly selecting a root vertex. 
    
    A key idea in the subsequent will be the fact that the Brownian continuum tree is invariant under rerooting:
    \begin{definition}
        For any Borel probability measure $\mu$ on $\mathbb{T}_{\boldsymbol{rt}}$ a \textit{$\mu$-invariant rerooting map} is a mapping $f:\mathbb{T}_{\boldsymbol{rt}}\rightarrow \mathbb{T}_{\boldsymbol{rt}}$ such that $f$ commutes with the projection $\pi_{12}$ and such that $f_*\mu=\mu$, with both statements holding $\mu$-almost everywhere.
    \end{definition}
    Following Croydon \cite{Croydon-VolumeGrowth} we have the following example of a $\Theta_{\boldsymbol{rt}}$-invariant rerooting map: 
    \begin{fact}\label{fact: Rerooting}
        For each $s\in (0,1)$ define the mapping
        \begin{align}
            f_s:\mathscr{E}^{(1)}\rightarrow\mathscr{E}^{(1)} && f_s:E\mapsto f_sE\nonumber
        \end{align}
        where
        \begin{align}
            f_sE(t)=\left\{\begin{array}{rl}
                \rho_E(s,s+t), &  t\in [0,1-s] \\
                \rho_E(s,s+t-1), & t\in [1-s,1]
            \end{array}\right. 
        \end{align}
        for all $t\in [0,1]$. Then we have the induced mappings
        \begin{align}
            f_{\mathfrak{q}_E(s)}:\text{enc}(\mathscr{E}^{(1)})\rightarrow \text{enc}(\mathscr{E}^{(1)}) && f_{\mathfrak{q}_E(s)}:(\mathcal{T}_E,\rho_E,\mathfrak{q}_E(0))\mapsto f_{\mathfrak{q}_E(s)}(\mathcal{T}_E,\rho_E,\mathfrak{q}_E(0))=(\mathcal{T}_{f_sE},\rho_{f_sE},\mathfrak{q}_{f_sE}(0))
        \end{align}
        and
        \begin{align}
            f_{\mathfrak{q}_E(s)}^E:\mathcal{T}_E\rightarrow \mathcal{T}_{f_sE} && f_{\mathfrak{q}_E(s)}^E:\mathfrak{q}_E(t)\mapsto \left\{\begin{array}{rl}
                \mathfrak{q}_{f_sE}(t-s+1), & t\in [0,s] \\
                \mathfrak{q}_{f_sE}(t-s), & t\in [s,1]
            \end{array}\right.;
        \end{align}
        $f_{\mathfrak{q}_E(s)}^E$ is an isometry for each $E\in \mathscr{E}^{(1)}$ and we may identify $\mathfrak{q}_{f_sE}(0)=\mathfrak{q}_{E}(s)$; then $f_s$ is a $\Theta_{\boldsymbol{rt}}$-invariant rerooting map for all $s\in (0,1)$. 
    \end{fact}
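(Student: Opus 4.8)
The plan is to reduce everything to a single deterministic identity relating the two pseudometrics, and then to peel off the probabilistic invariance as a separate, essentially classical, input. Write $\phi_s$ for the reparameterisation appearing in the definition of $f^E_{\mathfrak{q}_E(s)}$, i.e. $\phi_s(t)=t-s+1$ for $t\in[0,s]$ and $\phi_s(t)=t-s$ for $t\in[s,1]$, and let $\psi_s$ be its cyclic inverse, $\psi_s(u)=s+u$ for $u\in[0,1-s]$ and $\psi_s(u)=s+u-1$ for $u\in[1-s,1]$ (so that $\psi_s\circ\phi_s=\mathrm{id}$ modulo the identification $0\sim1$). The heart of the matter is the \emph{rerooting identity}
\begin{align}
    \rho_{f_sE}(u,u')=\rho_E(\psi_s(u),\psi_s(u'))\nonumber
\end{align}
for all $u,u'\in[0,1]$. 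Granting this, $f^E_{\mathfrak{q}_E(s)}$ is immediately a well-defined distance-preserving bijection: setting $u=\phi_s(t)$, $u'=\phi_s(t')$ gives $\rho_{f_sE}(\phi_s(t),\phi_s(t'))=\rho_E(t,t')$, so the map respects the Kolmogorov quotients and is injective, while surjectivity follows since $\psi_s$ is onto. As $\mathcal{T}_E=\mathfrak{q}_E([0,1])$ is the continuous image of a compact, this is a genuine isometry. Taking $t=s$ in the second branch yields $f^E_{\mathfrak{q}_E(s)}(\mathfrak{q}_E(s))=\mathfrak{q}_{f_sE}(0)=\boldsymbol{rt}_{f_sE}$, which is the asserted identification $\mathfrak{q}_{f_sE}(0)=\mathfrak{q}_E(s)$.

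To prove the rerooting identity I would first rewrite $f_sE$ via the defining formula for $\rho_E$ in fact \ref{fact: ExcursionPseudometric}, obtaining $f_sE(t)=E(s)+E(\psi_s(t))-2\min_{I_t}E$ where $I_t$ is the interval between $s$ and $\psi_s(t)$, and record the boundary data $E(0)=E(1)=0$ with $E>0$ on $(0,1)$, so the global minimum $0$ is attained only at the endpoints; this is exactly what makes $f_sE$ continuous at the seam $t=1-s$ (there $\rho_E(s,0)=\rho_E(s,1)=E(s)$ because $\mathfrak{q}_E(0)=\mathfrak{q}_E(1)$). The identity is then verified by a case analysis on the positions of $u,u'$ relative to the seam $1-s$. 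After applying the formula for $\rho_{f_sE}$ and cancelling the common terms $E(\psi_s(u)),E(\psi_s(u'))$, everything reduces to a bookkeeping identity for the minimum of $f_sE$ over the contour arc $[u,u']$. The decisive case is when $[u,u']$ crosses the seam, say $a:=\psi_s(u)\ge s\ge\psi_s(u')=:b$; here the arc splits into a piece from $\mathfrak{q}_E(a)$ to the old root and a piece from the old root to $\mathfrak{q}_E(b)$, and using $\min_{[a,1]}E=\min_{[0,b]}E=0$ together with a first-passage-to-a-level argument one finds
\begin{align}
    \min_{r\in[a,1]}\rho_E(s,r)=E(s)-\min_{[s,a]}E && \min_{r\in[0,b]}\rho_E(s,r)=E(s)-\min_{[b,s]}E\nonumber
\end{align}
and hence $\min_{t\in[u,u']}f_sE(t)=E(s)-\max(\min_{[s,a]}E,\min_{[b,s]}E)$. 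Substituting this back collapses the left-hand side to $E(a)+E(b)-2\min(\min_{[s,a]}E,\min_{[b,s]}E)=E(a)+E(b)-2\min_{[b,a]}E=\rho_E(a,b)$, as required; the non-crossing cases are handled by the same first-passage argument and are strictly easier.

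For the invariance, I would use that the rooted encoding map intertwines $E\mapsto f_sE$ on $\mathscr{E}^{(1)}$ with $f_s$ on $\mathbb{T}_{\boldsymbol{rt}}$ (this intertwining is built into the statement of the Fact), so that
\begin{align}
    (f_s)_*\Theta_{\boldsymbol{rt}}=(\text{enc})_*(f_s)_*\nu^{(1)}\nonumber.
\end{align}
It therefore suffices to invoke the classical fact that $\nu^{(1)}$ is invariant under rerooting at a fixed time, $(f_s)_*\nu^{(1)}=\nu^{(1)}$, which follows from the cyclic exchangeability of the Brownian bridge via the Vervaat transform. Combined with the isometry—which shows $f_s$ commutes with $\pi_{12}$ up to isometry and sends the root to $\mathfrak{q}_E(s)$—this identifies $f_s$ as a $\Theta_{\boldsymbol{rt}}$-invariant rerooting map. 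One subtlety is that $f_sE$ is strictly positive on $(0,1)$, hence a genuine element of $\mathscr{E}^{(1)}$, only when $\mathfrak{q}_E(s)$ is a leaf; since a fixed time $s$ yields a leaf for $\nu^{(1)}$-almost every $E$, and an invariant rerooting map is only required to exist almost everywhere, this causes no difficulty, and the deterministic isometry is unaffected in any case, the pseudometric tree being well-defined even when the coding function touches zero.

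I expect the main obstacle to be the seam-crossing minimum identity: obtaining the correct decomposition of $\min_{[u,u']}f_sE$ hinges on carefully exploiting that the excursion returns to its global minimum at both endpoints, so that the two downward contour segments toward the old root descend exactly to the ancestral line of the new root. The probabilistic invariance $(f_s)_*\nu^{(1)}=\nu^{(1)}$, although easy to state, is the one ingredient I would import wholesale rather than reprove.
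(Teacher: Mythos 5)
Your argument is correct, and although its overall architecture (a deterministic isometry identity followed by a distributional invariance) matches the paper's, both halves are carried out differently. For the deterministic half the paper also reduces $\rho_{f_sE}(t_1,t_2)$ to $\rho_E(s,u_1)+\rho_E(s,u_2)-2\inf\{\rho_E(s,u):u\in[u_1\land u_2,u_1\lor u_2]\}$, but it then finishes abstractly via the tripod point $\Lambda$ of fact \ref{fact: LambdaPointTriangle}; in doing so it silently identifies $\inf f_sE[t_1\land t_2,t_1\lor t_2]$ with the infimum of $\rho_E(s,\cdot)$ over the interval $[u_1\land u_2,u_1\lor u_2]$, which is exactly the step that breaks down when the contour interval straddles the seam $1-s$ (there the image under the cyclic shift is the cyclic \emph{complement} $[u_1\lor u_2,1]\cup[0,u_1\land u_2]$, over which the infimum is $E(s)-\max(\min_{[s,a]}E,\min_{[b,s]}E)$ rather than $0$). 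Your explicit first-passage computation in the seam-crossing case is therefore not an alternative luxury but precisely the bookkeeping needed to make the key identity rigorous, and your non-crossing cases and the resulting well-definedness, injectivity, surjectivity and root identification all check out. For the probabilistic half the routes genuinely diverge: you import $(f_s)_*\nu^{(1)}=\nu^{(1)}$ from cyclic exchangeability of the bridge via the Vervaat transform and push it through the encoding map, whereas the paper never invokes this and instead argues directly on $\Theta_{\boldsymbol{rt}}$, using that the conditional law of the root given the unrooted tree is the mass measure $\lambda_E$ and that $f^E_{\mathfrak{q}_E(s)}$, being induced by a Lebesgue-preserving shift of $[0,1]$, pushes $\lambda_E$ to $\lambda_{f_sE}$. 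Your route rests on a citable classical theorem and makes the source of the invariance transparent; the paper's is shorter and stays inside its own formalism. Finally, your remark that $f_sE$ can vanish in $(0,1)$ when $\mathfrak{q}_E(s)$ fails to be a leaf, together with the observation that this is a $\nu^{(1)}$-null event for fixed $s$ and harmless for the quotient construction, addresses a genuine point the paper passes over.
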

    \begin{proof}
        Fix $s\in (0,1)$ and pick $t_1,\:t_2\in [0,1]$. For each $k\in \set{1,2}$ define
        \begin{align}
            u_k\coloneqq \left\{\begin{array}{rl}
                t_k-s+1, &  t_k\in [0,s]\\
                t_k-s, & t_k\in [s,1]
            \end{array}\right.\nonumber
        \end{align}
        and note that if $t_k\in [0,s]$ then $u_k\in [1-s,1]$ while if $t_k\in [s,1]$ we have $u_k\in [0,1-s]$. The mapping $t_k\mapsto u_k$ is invertible and so we can equivalently begin by choosing $u_1,\:u_2$ and define $t_1$ and $t_2$ via this inverse. With this definition we have $f_{\mathfrak{q}_E(s)}^E(\mathfrak{q}_E(u_k))=\mathfrak{q}_{f_sE}(t_k)$. Also:
        \begin{align}
            \rho_{f_sE}(t_1,t_2)&=f_sE(t_1)+f_sE(t_2)-2\inf f_sE[t_1\land t_2,t_1\lor t_2]\nonumber\\
            &=\rho_E(s,u_1)+\rho_E(s,u_2)-2\inf \set{\rho_E(s,u):u\in [u_1\land u_2,u_1\lor u_2]}\nonumber\\
            &=\rho_E(\mathfrak{q}_E(s),\mathfrak{q}_E(u_1))+\rho_E(\mathfrak{q}_E(s),\mathfrak{q}_E(u_2))-2\inf \set{\rho_E(\mathfrak{q}_E(s),\mathfrak{q}_E(u)):u\in [u_1\land u_2,u_1\lor u_2]}\nonumber.
        \end{align}
        Noting that
        \begin{align}
            \inf \set{\rho_E(\mathfrak{q}_E(s),\mathfrak{q}_E(u)):u\in [u_1\land u_2,u_1\lor u_2]}=\rho_E(\mathfrak{q}_E(s),\Lambda_{\mathfrak{q}_E(s)\mathfrak{q}_E(u_1)\mathfrak{q}_E(u_2)})\nonumber
        \end{align}
        and recalling that
        \begin{align}
            \rho_E(\mathfrak{q}_E(s),\mathfrak{q}_E(u_k))=\rho_E(\mathfrak{q}_E(s),\Lambda_{\mathfrak{q}_E(s)\mathfrak{q}_E(u_1)\mathfrak{q}_E(u_2)})+\rho_E(\mathfrak{q}_E(u_k),\Lambda_{\mathfrak{q}_E(s)\mathfrak{q}_E(u_1)\mathfrak{q}_E(u_2)})\nonumber
        \end{align}
        for $k=1,\:2$, we immediately find that
        \begin{align}
            \rho_{f_sE}(t_1,t_2)=\rho_E(u_1,u_2) \nonumber.
        \end{align}
        Noting that $\rho_E(u_1,u_2)=0$ iff $\rho_{f_sE}(t_1,t_2)=0$, we see that the mapping $f_{\mathfrak{q}_E(s)}^E$ is well-defined; but then the fact that $\rho_{f_sE}(t_1,t_2)=\rho_E(u_1,u_2)$ for arbitrary $u_1,\:u_2\in [0,1]$ implies that
        \begin{align}
            \rho_{f_sE}(f_{\mathfrak{q}_E(s)}^E(\mathfrak{q}_E(u_1)),f_{\mathfrak{q}_E(s)}^E(\mathfrak{q}_E(u_2)))=\rho_{f_sE}(\mathfrak{q}_{f_sE}(t_1)),\mathfrak{q}_{f_sE}(t_2)))=\rho_{E}(\mathfrak{q}_{E}(u_1)),\mathfrak{q}_{E}(u_2)))\nonumber
        \end{align}
        and $f_{\mathfrak{q}_E(s)}^E$ is an isometric imbedding; surjectivity follows from our freedom to take the variables $t_k$ as our starting point.

        To see that $f_s$ is a $\Theta_{\boldsymbol{rt}}$-invariant rerooting map first note that it automatically commutes with the projection $\pi_{12}$ on $\text{end}(\mathscr{E}^{(1)})=\text{supp}(\Theta_{\boldsymbol{rt}})$ since $f_{\mathfrak{q}_E(s)}^E$ is an isometry for each $E\in \mathscr{E}^{(1)}$. To ensure that $(f_s)_*\Theta_{\boldsymbol{rt}}=\Theta_{\boldsymbol{rt}}$ it is sufficient to verify that
        \begin{align}
            (f_s)_*\Theta_{\boldsymbol{rt}}(\cdot \vert \pi_{12}(\mathcal{T}_{\mathbb{e}},\rho_{\mathbb{e}},\boldsymbol{rt}_{\mathbb{e}})=(\mathcal{T}_E,\rho_E))=\lambda_E\nonumber;
        \end{align}
        but
        \begin{align}
            (f_s)_*\Theta_{\boldsymbol{rt}}(\cdot \vert \pi_{12}(\mathcal{T}_{\mathbb{e}},\rho_{\mathbb{e}},\boldsymbol{rt}_{\mathbb{e}})=(\mathcal{T}_E,\rho_E))=(f_{\mathfrak{q}_E(s)}^E)_*\lambda_E\nonumber
        \end{align}
        and the desired result holds since $(f_{\mathfrak{q}_E(s)}^E)_*$ is an isometry.
    \end{proof}
    Note that we can trivially extend this rerooting map to unnormalised trees by first rescaling any unnormalised excursion so that it is normalised, rerooting and the applying the inverse scaling transform. As an example of the utility of rerooting invariance consider the following:
    \begin{proposition}\label{proposition: RerootingInvarianceVolume}
        Let $(\mathcal{T}_{\mathbb{e}},\rho_{\mathbb{e}})$ be the Brownian continuum random tree. Then
        \begin{align}
            \mathbb{E}(\lambda_{\mathbb{e}}(\mathbb{B}_\varepsilon^{\mathbb{e}}(x))=1-\exp\left(-2\varepsilon^2\right)
        \end{align}
        for all $\varepsilon> 0$ for $\lambda_{\mathbb{e}}$-almost all $x\in \mathcal{T}_{\mathbb{e}}$.
    \end{proposition}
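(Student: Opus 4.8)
The plan is to use the rerooting invariance of Fact \ref{fact: Rerooting} to transfer the problem to the root $\mathfrak{q}_{\mathbb{e}}(0)$, where the metric degenerates to the excursion itself, and then to evaluate an expected occupation time by means of the one-dimensional marginal law of the normalized Brownian excursion. Throughout I work in the excursion representation, so that $\mathbb{E}$ is understood as the $\nu^{(1)}$-expectation of the corresponding functional of the random excursion $\mathbb{e}$; in particular $\lambda_{\mathbb{e}}=(\mathfrak{q}_{\mathbb{e}})_*\text{Leb}$, so that $\lambda_{\mathbb{e}}$-almost every point of $\mathcal{T}_{\mathbb{e}}$ is of the form $\mathfrak{q}_{\mathbb{e}}(s)$ with $s\in(0,1)$. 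It therefore suffices to prove that, for each fixed $s\in(0,1)$,
\begin{align}
\mathbb{E}\left(\lambda_{\mathbb{e}}(\mathbb{B}_\varepsilon^{\mathbb{e}}(\mathfrak{q}_{\mathbb{e}}(s)))\right)=1-e^{-2\varepsilon^2}.\nonumber
\end{align}

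First I would show that this expectation is independent of $s$. By the rerooting invariance of Fact \ref{fact: Rerooting}, the induced isometry $f_{\mathfrak{q}_E(s)}^E:\mathcal{T}_E\rightarrow\mathcal{T}_{f_sE}$ sends the root $\mathfrak{q}_E(0)$ to the point identified with $\mathfrak{q}_E(s)$ and, being induced by a cyclic, Lebesgue-preserving reparametrization of $[0,1]$, carries the volume measure $\lambda_E$ to $\lambda_{f_sE}$; hence $\lambda_E(\mathbb{B}_\varepsilon^E(\mathfrak{q}_E(s)))=\lambda_{f_sE}(\mathbb{B}_\varepsilon^{f_sE}(\mathfrak{q}_{f_sE}(0)))$ pointwise in $E$. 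Since $f_s$ preserves the law of the normalized excursion, taking expectations gives $\mathbb{E}(\lambda_{\mathbb{e}}(\mathbb{B}_\varepsilon^{\mathbb{e}}(\mathfrak{q}_{\mathbb{e}}(s))))=\mathbb{E}(\lambda_{\mathbb{e}}(\mathbb{B}_\varepsilon^{\mathbb{e}}(\mathfrak{q}_{\mathbb{e}}(0))))$ for every $s\in(0,1)$. This reduces the problem to the root $s=0$ and, as the identity then holds for each fixed $s$, simultaneously delivers the $\lambda_{\mathbb{e}}$-almost-everywhere conclusion (the parametrization $\mathfrak{q}_{\mathbb{e}}$ carrying the set of admissible $s$ onto $\lambda_{\mathbb{e}}$-almost all of $\mathcal{T}_{\mathbb{e}}$).

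Next I would carry out the computation at the root. Because $\mathbb{e}_0=0$ and $\mathbb{e}\geq 0$ we have $\land\mathbb{e}[0,t]=0$ for every $t\in(0,1)$, so that $\rho_{\mathbb{e}}(0,t)=\mathbb{e}_t$ and consequently
\begin{align}
\lambda_{\mathbb{e}}(\mathbb{B}_\varepsilon^{\mathbb{e}}(\mathfrak{q}_{\mathbb{e}}(0)))=\text{Leb}\set{t\in[0,1]:\mathbb{e}_t<\varepsilon}.\nonumber
\end{align}
Taking expectations and applying Tonelli's theorem yields
\begin{align}
\mathbb{E}\left(\lambda_{\mathbb{e}}(\mathbb{B}_\varepsilon^{\mathbb{e}}(\mathfrak{q}_{\mathbb{e}}(0)))\right)=\int_0^1\mathbb{P}(\mathbb{e}_t<\varepsilon)\,\d t,\nonumber
\end{align}
so the whole problem is reduced to the one-dimensional marginals of the normalized Brownian excursion (equivalently, to the law of the distance from the root to a uniformly chosen point).

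The substantive step, and the one I expect to be the main obstacle, is the evaluation of this integral. I would invoke the classical fact that, for fixed $t\in(0,1)$, the variable $\mathbb{e}_t$ has density $x\mapsto\sqrt{2/\pi}\,x^2(t(1-t))^{-3/2}\exp(-x^2/(2t(1-t)))$ on $(0,\infty)$ (reflecting that $\mathbb{e}$ is a Bessel$(3)$ bridge; see e.g. \cite{RevuzYor-ContinuousMartingalesBrownianMotion}). Interchanging the order of integration, the claim is equivalent to the occupation-density identity
\begin{align}
\int_0^1\sqrt{\frac{2}{\pi}}\,\frac{x^2}{(t(1-t))^{3/2}}\exp\left(-\frac{x^2}{2t(1-t)}\right)\d t=4x\,e^{-2x^2},\nonumber
\end{align}
which I would establish by the substitution $t=\tfrac12(1+\sin\theta)$, so that $t(1-t)=\tfrac14\cos^2\theta$, followed by $w=\tan\theta$; these reduce the left-hand side to a one-dimensional Gaussian integral. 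Integrating the resulting Rayleigh density $4x\,e^{-2x^2}$ over $x\in(0,\varepsilon)$ then gives $1-e^{-2\varepsilon^2}$, completing the argument. Beyond this explicit calculation the only points requiring care are the measure-theoretic bookkeeping—justifying Tonelli and the pushforward of $\lambda_E$ under the rerooting isometry—both of which are routine.
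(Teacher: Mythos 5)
Your proof is correct, and it shares the paper's key structural step---using the rerooting invariance of fact \ref{fact: Rerooting} to reduce the claim at a generic point $\mathfrak{q}_{\mathbb{e}}(s)$ to the claim at the root, noting that the induced map is an isometry carrying $\lambda_E$ to $\lambda_{f_sE}$---but it diverges in how the root case is handled. The paper simply cites Croydon's Theorem 1.1 for the identity $\mathbb{E}_{\boldsymbol{rt}}(\lambda_{\mathbb{e}}(\mathbb{B}_\varepsilon^{\mathbb{e}}(\boldsymbol{rt}_{\mathbb{e}})))=1-e^{-2\varepsilon^2}$ and spends its effort on the change-of-variables bookkeeping showing $\mathbb{E}=\mathbb{E}_{\boldsymbol{rt}}$ for this functional; you instead derive the root formula from scratch by observing that $\rho_{\mathbb{e}}(\mathfrak{q}_{\mathbb{e}}(0),\mathfrak{q}_{\mathbb{e}}(t))=\mathbb{e}_t$, reducing the expected ball volume to $\int_0^1\mathbb{P}(\mathbb{e}_t<\varepsilon)\,\d t$ and evaluating it via the Bessel$(3)$ bridge marginal density. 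I checked your occupation-density integral: the substitutions $t=\tfrac12(1+\sin\theta)$ and $w=\tan\theta$ do yield $\int_0^1 p_t(x)\,\d t=4x e^{-2x^2}$, and integrating this Rayleigh density over $(0,\varepsilon)$ gives $1-e^{-2\varepsilon^2}$, so the computation is sound. What your route buys is a self-contained proof that does not lean on Croydon's volume-growth result (at the cost of invoking the classical marginal law of the normalized excursion, which you correctly attribute); what the paper's route buys is brevity and an explicit template---the $g_x$ change-of-variables argument---that is reused almost verbatim in the proof of lemma \ref{lemma: FundamentalLemma}(i). The only points you should make explicit if writing this up fully are the joint measurability of $(t,\omega)\mapsto\mathbb{e}_t(\omega)$ underlying Tonelli (immediate from path continuity) and the identification $\lambda_E(\mathbb{B}_\varepsilon^E(\mathfrak{q}_E(0)))=\mathrm{Leb}\set{t\in[0,1]:E(t)<\varepsilon}$ via $\mathfrak{q}_E^{-1}$, both of which are routine as you say.
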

    \begin{proof}
        Following \cite[Theorem 1.1]{Croydon-VolumeGrowth} we have that
        \begin{align}
            \mathbb{E}_{\boldsymbol{rt}}(\lambda_{\mathbb{e}}(\mathbb{B}_\varepsilon^{\mathbb{e}}(\boldsymbol{rt}_{\mathbb{e}}))=1-\exp\left(-2\varepsilon^2\right)\nonumber.
        \end{align}
        Consider the random variable $x=\mathfrak{q}_{\mathbb{e}}(s)$ for $s\in [0,1]$ chosen randomly according to $\lambda$ and $\mathfrak{q}_{\mathbb{e}}:[0,1]\rightarrow \mathcal{T}_{\mathbb{e}}$ the natural quotient map. Then we have a rerooting map
        \begin{align}
            f_{x}^{\mathbb{e}}(x)=\boldsymbol{rt}_{\mathbb{e}}\nonumber,
        \end{align}
        where we define $f_s$ and $f_{x}^{\mathbb{e}}=f_{\mathfrak{q}_{\mathbb{e}}}^{\mathbb{e}}$ as in fact \ref{fact: Rerooting}. Then since $f_{x}^{\mathbb{e}}$ is an isometry we have $f_{x}^{\mathbb{e}}(\mathbb{B}_\varepsilon^{\mathbb{e}}(x))=\mathbb{B}_\varepsilon^{\mathbb{e}}(\boldsymbol{rt}_{\mathbb{e}})$ so if we let $g_x:\text{enc}(\mathscr{E})\rightarrow\mathbb{R}$ be defined via
        \begin{align}
            g_x(\mathcal{T}_E,\rho_E,\boldsymbol{rt}_E)=\lambda_{E}\nonumber(\mathbb{B}_\varepsilon^{E}(x))
        \end{align}
        with $E\in \mathscr{E}$, we have
        \begin{align}
            \mathbb{E}_{\boldsymbol{rt}}\left(\lambda_{\mathbb{e}}\nonumber(\mathbb{B}_\varepsilon^{\mathbb{e}}(x))\right)&=\int_{\mathbb{T}_{\boldsymbol{rt}}}\text{d}\Theta_{\boldsymbol{rt}}(\mathcal{T}_E,\rho_E,\boldsymbol{rt}_E)g_x(\mathcal{T}_E,\rho_E,\boldsymbol{rt}_E)\nonumber\\
            &=\int_{\mathbb{T}_{\boldsymbol{rt}}}\text{d}(f_s)_*\Theta_{\boldsymbol{rt}}(\mathcal{T}_E,\rho_E,\boldsymbol{rt}_E)g_x(\mathcal{T}_E,\rho_E,\boldsymbol{rt}_E)\nonumber\\
            &=\int_{\mathbb{T}_{\boldsymbol{rt}}}\text{d}\Theta_{\boldsymbol{rt}}(\mathcal{T}_E,\rho_E,x)g_x(\mathcal{T}_E,\rho_E,f_{x}^{\mathbb{e}}(\boldsymbol{rt}_E))\nonumber\\
            &=\int_{\mathbb{T}_{\boldsymbol{rt}}}\text{d}\Theta_{\boldsymbol{rt}}(\mathcal{T}_E,\rho_E,x)\lambda_E(\mathbb{B}^E_{\varepsilon}(x))\nonumber\\
            &=\mathbb{E}_{\boldsymbol{rt}}(\lambda_{\mathbb{e}}(\mathbb{B}_\varepsilon^{\mathbb{e}}(\boldsymbol{rt}_{\mathbb{e}}))\nonumber
        \end{align}
        where in the first step we have used the definition of $g_x$, used the $\Theta_{\boldsymbol{rt}}$-rerooting invariance of $f_s$ in the second, applied the standard change of variables formula in the third and once again applied the definition of $g_x$ in the penultimate step. The final step is then an immediate consequence of the definition of $\mathbb{E}_{\boldsymbol{rt}}(\lambda_{\mathbb{e}}(\mathbb{B}_\varepsilon^{\mathbb{e}}(\boldsymbol{rt}_{\mathbb{e}}))$. But simultaneously we have:
        \begin{align}
            \mathbb{E}\left(\lambda_{\mathbb{e}}(\mathbb{B}_\varepsilon^{\mathbb{e}}(x))\right)&=\int_{\mathbb{T}}\text{d}\Theta(\mathcal{T}_E,\rho_E)\lambda_E(\mathbb{B}_\varepsilon^{E}(x))=\int_{\mathbb{T}}\text{d}\Theta(\mathcal{T}_E,\rho_E)\int_{\mathcal{T}_E}\text{d}\lambda_E(y)\lambda_E(\mathbb{B}_\varepsilon^{E}(x))=\int_{\mathbb{T}_{\boldsymbol{rt}}}\text{d}\Theta_{\boldsymbol{rt}}(\mathcal{T}_E,\rho_E,y)\lambda_E(\mathbb{B}_\varepsilon^{E}(x))\nonumber\\
            &=\mathbb{E}_{\boldsymbol{rt}}\left(\lambda_{\mathbb{e}}\nonumber(\mathbb{B}_\varepsilon^{\mathbb{e}}(x))\right)
        \end{align}
        where we have used the fact that
        \begin{align}
            \lambda_E(\mathcal{T}_E)=\int_{\mathcal{T}_E}\text{d}\lambda_E(y)=1\nonumber
        \end{align}
        for all $E\in \mathscr{E}^{(1)}$ in the second step and applied the definition of $\Theta_{\boldsymbol{rt}}$ in the third.
    \end{proof}
    We will also need the branching property of the Brownian continuum random tree; Duquesne and Le Gall provide a formulation of this branching property in terms of the local time process for general L\'{e}vy trees \cite[Theorem 4.2]{DuquesneLeGall-ProbabilisticFractalAspectsLevyTrees}. The essence of this statement is that the pairs $(x,\tilde{\mathcal{T}})\in \partial \mathbb{B}^{\mathcal{T}}_a(\boldsymbol{rt})\times \mathbb{T}_{\boldsymbol{rt}}$---where $x$ is identified with the root of $\tilde{\mathcal{T}}$---issuing away from a level set $\partial \mathbb{B}^{\mathcal{T}}_a(\boldsymbol{rt})$, $a\geq 0$, are distributed according to a Poisson point measure with density $\ell^a\times \Xi$, where $\ell^a$ is the local time at $a$ and $\Xi$ is the distribution of the L\'{e}vy tree in question; in particular, it should be stressed that the Poisson point measure is unique in distribution and independent of the structure of $\mathbb{B}^{\mathcal{T}}_a(\boldsymbol{rt})\cup\partial \mathbb{B}^{\mathcal{T}}_a(\boldsymbol{rt})$. For \textit{Brownian} trees we can make a slightly stronger statement 
    \begin{fact}\label{fact: Branching}
        Let $(\mathcal{T},\rho,\boldsymbol{rt})$ be a random tree with law $\text{enc}_*\nu$ and let $\lambda$ be the natural measure on $\mathcal{T}$ i.e. the pushforwards of the normalised Lebesgue measure on any excursion encoding $\mathcal{T}$ with respect to the quotient map $\mathfrak{q}$. The descendant subtrees at $x$ and $y$ are identically distributed for $\lambda$-almost all $x,\:y\in \mathcal{T}$.
    \end{fact}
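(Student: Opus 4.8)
The plan is to prove the stronger claim that, for $\lambda$-almost all $x$, the law of the descendant subtree $\mathcal{T}^+(x)$ depends on no feature of $x$ — in particular not on its height $\rho(\boldsymbol{rt},x)$; the asserted equality in law for two typical points is then immediate. Here the randomness is carried by the encoding excursion $E$, and a typical point is specified by fixing a parameter $s$ with $x=\mathfrak{q}(s)$. Since the (unnormalised) Brownian excursion is \emph{not} time-homogeneous, the heights $\rho(\boldsymbol{rt},\mathfrak{q}(s))$ and $\rho(\boldsymbol{rt},\mathfrak{q}(t))$ do not in general share a law, so the real content of the statement is precisely that this height-dependence is washed out at the level of the descendant subtree.

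First I would record the structural identification underlying everything. For $x=\mathfrak{q}(s)$ with $a=\rho(\boldsymbol{rt},x)=E(s)$, one computes from the definition of $\rho_E$ in fact \ref{fact: ExcursionPseudometric} that $\rho_E(\boldsymbol{rt},\mathfrak{q}(u))=E(u)$ and, using fact \ref{fact: LambdaPointTriangle}, that $\mathfrak{q}(u)$ is a descendant of $x$ precisely when $\land E[s\land u,s\lor u]=a$; equivalently, when $E\geq a$ throughout $[s\land u,s\lor u]$. Thus the maximal interval $[g,d]\ni s$ with $E\geq a$ and $E(g)=E(d)=a$ parametrises $\mathcal{T}^+(x)$, and the shifted sub-excursion $u\mapsto E(g+u)-a$ on $[0,d-g]$ encodes it. In other words, $\mathcal{T}^+(x)$ is exactly the continuum tree encoded by the excursion of $E$ above level $a$ that straddles $s$.

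Next I would invoke the branching property quoted above. Since $x$ lies on the level set $\partial\mathbb{B}^{\mathcal{T}}_a(\boldsymbol{rt})$ and $\mathcal{T}^+(x)$ is precisely the member of the Poisson family issuing from that level set whose root is $x$, the conditional law of $\mathcal{T}^+(x)$ given its height $a$ is the common subtree law $\Xi$ appearing in the intensity $\ell^a\times\Xi$ — and, crucially, $\Xi$ is independent of $a$ and of the structure of $\mathbb{B}^{\mathcal{T}}_a(\boldsymbol{rt})\cup\partial\mathbb{B}^{\mathcal{T}}_a(\boldsymbol{rt})$. Equivalently, in excursion terms one uses that the above-level excursion straddling $s$ depends on the level only through a spatial shift, which leaves the encoded tree unchanged by spatial homogeneity of Brownian increments. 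Hence the conditional law of $\mathcal{T}^+(\mathfrak{q}(s))$ given $E(s)=a$ is the same measure $\Xi$ for every $a$ and every $s$; integrating against the ($s$-dependent) law of the height then yields a marginal law independent of $s$. Applying this to $s$ and $t$ gives $\mathcal{T}^+(x)$ and $\mathcal{T}^+(y)$ identically distributed for $\lambda$-almost all $x,y$.

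The delicate step, and the main obstacle, is the passage from the Poisson/local-time description of the branching property — which lives on a fixed level set and is indexed by local time — to a statement about a \emph{mass-typical} point whose height is random. Two points need care: first, that conditioning on the $\lambda$-null event $\{E(s)=a\}$ is legitimate, which I would handle by disintegrating the excursion law over the height (equivalently, by Bismut's decomposition of the It\^{o} excursion measure, under which the height at a sampled time and the surrounding sub-excursions are explicitly independent); and second, that if one instead reads the statement with $x,y$ genuinely sampled from $\lambda$, then selecting the root of a subtree by mass rather than by local time introduces a size-reweighting — but this reweighting acts on the \emph{single} universal law $\Xi$ and is therefore identical for $x$ and for $y$. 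Once the level-independence of $\Xi$ is in hand, both subtleties affect only the common marginal law and not the equality between the two points, so the stated fact follows.
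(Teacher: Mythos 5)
Your argument follows essentially the same route as the paper's own proof: identify the descendant subtree at $x=\mathfrak{q}(s)$ with the sub-excursion of $E$ above the level $a=E(s)$ that straddles $s$, and then invoke the Duquesne--Le Gall branching property to conclude that its law depends neither on $a$ nor on the structure of $\mathbb{B}^{\mathcal{T}}_a(\boldsymbol{rt})\cup\partial\mathbb{B}^{\mathcal{T}}_a(\boldsymbol{rt})$, so that it is the same for two mass-typical points. You are in fact somewhat more explicit than the paper about the two sampling subtleties (disintegrating over the height, e.g.\ via Bismut's decomposition, and the distinction between local-time and mass sampling of the root), so the proposal stands as a proof of the stated fact.
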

    \begin{proof}
        The essence of theorem 4.2 of Ref. \cite{DuquesneLeGall-RandomTreesLevyProcesses} is that we have a family of \textit{local time measures} $\ell^a$, $a\geq 0$, which are supported on the sets $\partial \mathbb{B}^{\mathcal{T}}_a(\boldsymbol{rt})$  such that the rooted trees issuing from $\partial \mathbb{B}^{\mathcal{T}}_a(\boldsymbol{rt})$  are distributed according to the (unique in distribution) Poisson point measure on $\partial \mathbb{B}^{\mathcal{T}}_a(\boldsymbol{rt})\times \mathbb{T}_{\boldsymbol{rt}}$ with density $\ell^a\times \text{enc}_*\nu$. Strictly speaking, for any $a\geq 0$ we have a set $\mathcal{T}\backslash A$, $A\coloneqq \mathbb{B}^{\mathcal{T}}_a(\boldsymbol{rt})\cup \partial \mathbb{B}^{\mathcal{T}}_a(\boldsymbol{rt})$, which generically splits into multiple connected components. Each connected component consists of a random real tree such that all elements have the same ancestor $x\in \partial \mathbb{B}^{\mathcal{T}}_a(\boldsymbol{rt})$ so attaching this ancestor to the connected component as the root of the tree gives a random rooted real tree $\mathcal{T}_x$. Obviously $\mathcal{T}_x$ is a subset of $\mathcal{T}^{+}(x)$, the descendant subtree  at $x$, but may not coincide the latter if $\mathcal{T}^{+}(x)\backslash\set{x}$ is not connected. This latter event occurs, however, only if $x$ corresponds to the local minimum of some excursion such that $\mathfrak{q}(0)=\boldsymbol{rt}$; since a Brownian excursion has unique local minima almost surely while conditionally on $H(\mathcal{T})=\sup_{u\in \mathcal{T}}\rho(u,\boldsymbol{rt})\geq a$ every Brownian excursion hits the level $a$ a countable number of times (Brownian motion is an instantaneous Markov process) and we find that the trees $\mathcal{T}_x$ correspond to descendant subtrees at $x$ almost surely. That is to say the pairs $(x,\mathcal{T}^{+}(x))$ are distributed according to the Poisson point measure with density $\ell^a\times \text{enc}_*\nu$. Since this measure is unique in distribution, the distribution for the pairs $(x,\mathcal{T}^{+}(x))$ is independent of the set $A$, which is the essence of the branching property in general L\'{e}vy trees. Thus conditionally on the existence of a nontrivial descendant subtree at $x$, the distribution for descendant subtrees at points $x\in \mathcal{T}$ is simply $\text{enc}_*\nu$, independently of $x$; since a descendant subtree at $x\in \mathcal{T}_{\mathbb{e}}$ exists with probability $1$ we have the desired result.
    \end{proof}
    \section{Computing Curvature Bounds}
    In this section we compute bounds on the Ollivier curvature in the Brownian continuum random tree. We shall need two lemmas, the first of which tells us how expectations of random functions of the distance are related when evaluated on various parts of small balls:
    \begin{lemma}\label{lemma: FundamentalLemma}
        Let $(\mathcal{T}_{\mathbb{e}},\rho_{\mathbb{e}})$ be a rooted Brownian continuum random tree and $f:\mathbb{R}\rightarrow \mathbb{R}$ a measurable mapping.
        \begin{enumerate}
            \item For $\lambda_{\mathbb{e}}\times \lambda_{\mathbb{e}}$-almost all $(x,y)\in \mathcal{T}_{\mathbb{e}}\times \mathcal{T}_{\mathbb{e}}$ we have
            \begin{align}
                \mathbb{E}\left(\lambda_{\mathbb{e}}\left(\mathbb{1}_{\mathbb{B}^{\mathbb{e}}_\varepsilon(x)}f(\sigma\mapsto \rho_{\mathbb{e}}(x,\sigma))\right)\right)=\mathbb{E}\left(\lambda_{\mathbb{e}}\left(\mathbb{1}_{\mathbb{B}^{\mathbb{e}}_\varepsilon(y)}f(\sigma\mapsto \rho_{\mathbb{e}}(y,\sigma))\right)\right)
            \end{align}
            for all $\varepsilon>0$.
            \item For $\lambda_{\mathbb{e}}\times \lambda_{\mathbb{e}}$-almost all $(x,y)$ and $(\tilde{x},\tilde{y})\in \mathcal{T}_{\mathbb{e}}\times \mathcal{T}_{\mathbb{e}}$ we have
            \begin{align}
                \mathbb{E}\left(\lambda_{\mathbb{e}}\left(\mathbb{1}_{\mathbb{O}^{\mathbb{e}}_\varepsilon(x,y)}f(\sigma\mapsto \rho_{\mathbb{e}}(x,\sigma))\right)\right)=\mathbb{E}\left(\lambda_{\mathbb{e}}\left(\mathbb{1}_{\mathbb{O}^{\mathbb{e}}_\varepsilon(\tilde{x},\tilde{y})}f(\sigma\mapsto \rho_{\mathbb{e}}(\tilde{x},\sigma))\right)\right)
            \end{align}
            for all $\varepsilon>0$.
            \item For $\lambda_{\mathbb{e}}\times \lambda_{\mathbb{e}}$-almost all $(x,y)$ and $(\tilde{x},\tilde{y})\in \mathcal{T}_{\mathbb{e}}\times \mathcal{T}_{\mathbb{e}}$ we have
            \begin{align}
                \mathbb{E}\left(\lambda_{\mathbb{e}}\left(\mathbb{1}_{\mathbb{A}^{\mathbb{e}}_\varepsilon(x,y)}f(\sigma\mapsto \rho_{\mathbb{e}}(x,\sigma))\right)\right)=\mathbb{E}\left(\lambda_{\mathbb{e}}\left(\mathbb{1}_{\mathbb{A}^{\mathbb{e}}_\varepsilon(\tilde{x},\tilde{y})}f(\sigma\mapsto \rho_{\mathbb{e}}(\tilde{x},\sigma))\right)\right)
            \end{align}
            for all $\varepsilon>0$.
            \item For $\lambda_{\mathbb{e}}\times \lambda_{\mathbb{e}}$-almost all $(x,y)\in \mathcal{T}_{\mathbb{e}}\times \mathcal{T}_{\mathbb{e}}$ we have
            \begin{align}
                \mathbb{E}\left(\lambda_{\mathbb{e}}\left(\mathbb{1}_{\mathbb{O}^{\mathbb{e}}_\varepsilon(x,y)}f(\sigma\mapsto \rho_{\mathbb{e}}(x,\sigma))\right)\right)=\mathbb{E}\left(\lambda_{\mathbb{e}}\left(\mathbb{1}_{\mathbb{A}^{\mathbb{e}}_\varepsilon(x,y)}f(\sigma\mapsto \rho_{\mathbb{e}}(x,\sigma))\right)\right)
            \end{align}
            for all sufficiently small $\varepsilon>0$.
        \end{enumerate}
    \end{lemma}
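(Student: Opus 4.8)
The plan is to treat the four parts in the order (i), (ii), (iii), (iv), with (i)--(iii) reducing to the two structural invariances already available---rerooting invariance (fact \ref{fact: Rerooting}) and the branching property (fact \ref{fact: Branching})---and with (iv) carrying the one genuinely new symmetric input. Throughout I would parametrise $x=\mathfrak{q}(s)$, $y=\mathfrak{q}(t)$ so that ``$\lambda_{\mathbb{e}}\times\lambda_{\mathbb{e}}$-almost all $(x,y)$'' becomes ``Lebesgue-almost all $(s,t)\in[0,1]^2$'', and work throughout with the functional interpretation of each integrand as a measurable function of a (pointed, measured) tree.

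For (i), I would repeat the change-of-variables argument of proposition \ref{proposition: RerootingInvarianceVolume} essentially verbatim, only with the integrand $\mathbb{1}_{\mathbb{B}^{\mathbb{e}}_\varepsilon(\cdot)}$ replaced by $\mathbb{1}_{\mathbb{B}^{\mathbb{e}}_\varepsilon(\cdot)}f(\sigma\mapsto\rho_{\mathbb{e}}(\cdot,\sigma))$. The point is that $G(\mathcal{T},\rho,x)\coloneqq \lambda(\mathbb{1}_{\mathbb{B}_\varepsilon(x)}f(\rho(x,\cdot)))$ is a measurable functional of the pointed tree alone, so its $\Theta_{\boldsymbol{rt}}$-expectation is invariant under the $\Theta_{\boldsymbol{rt}}$-rerooting map $f_s$; since $(\mathcal{T}_{\mathbb{e}},\rho_{\mathbb{e}},\mathfrak{q}(s))$ and $(\mathcal{T}_{\mathbb{e}},\rho_{\mathbb{e}},\mathfrak{q}(t))$ are thereby equidistributed, the two expectations agree for almost all $(s,t)$.

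For (ii), I would reroot at $y$ so that, by corollary \ref{corollary: OffspringBallRoot}, $\mathbb{O}^{\mathbb{e}}_\varepsilon(x,y)=\mathbb{B}^+_\varepsilon(x)$ is exactly the $\varepsilon$-ball of $x$ in its descendant subtree $\mathcal{T}^+(x)$; the offspring integral is then a functional of the rooted measured tree $\mathcal{T}^+(x)$ alone. By the branching property (fact \ref{fact: Branching}) the law of $\mathcal{T}^+(x)$ is $\text{enc}_*\nu$ independently of $x$ and of the ambient tree, and rerooting invariance lets me place the reference point $y$ arbitrarily, so the expectation is a single constant independent of $(x,y)$, yielding the equality with the $(\tilde x,\tilde y)$-version. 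Part (iii) is then free: since $x\in\mathbb{O}^{\mathbb{e}}_\varepsilon(x,y)$ and $\mathcal{T}^{\mathbb{c}x}(y)=\mathcal{T}\setminus\mathcal{T}^x(y)$, the ball decomposes disjointly as $\mathbb{B}^{\mathbb{e}}_\varepsilon(x)=\mathbb{A}^{\mathbb{e}}_\varepsilon(x,y)\sqcup\mathbb{O}^{\mathbb{e}}_\varepsilon(x,y)$, so the ancestry integral equals the ball integral minus the offspring integral; by (i) the former has position-independent expectation and by (ii) so does the latter, whence the ancestry expectation inherits position-independence, giving (iii).

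Part (iv) is the crux and the one place a reflection symmetry is needed. The idea is to realise the ancestry ball, like the offspring ball, as a descendant ball, by reflecting across $x$: I would reroot at $x$ itself (mass-typical, via fact \ref{fact: Rerooting}) so that $\mathcal{T}^x(y)$ and $\mathcal{T}^{\mathbb{c}x}(y)$ are unions of the subtrees issuing from the root $x$, which by the branching property form an exchangeable Poissonian family. Then $\mathbb{A}^{\mathbb{e}}_\varepsilon(x,y)$ is the $\varepsilon$-ball restricted to the subtree containing $y$ and $\mathbb{O}^{\mathbb{e}}_\varepsilon(x,y)$ the $\varepsilon$-ball restricted to the complementary subtrees; by lemma \ref{lemma: BallDecomposition}, for $\varepsilon<\ell\coloneqq\rho_{\mathbb{e}}(x,y)$ the restriction of $f(\rho_{\mathbb{e}}(x,\cdot))$ to either side depends only on the local structure near $x$, whereas the selection of the $y$-side is governed by the independent mass-sample $y$ sitting at distance $\ell>\varepsilon$. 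The main obstacle is exactly this last step: the $y$-containing subtree is picked in a mass-size-biased manner, so I must show that for $\varepsilon<\ell$ the size-biasing decouples from the $\varepsilon$-local $f$-integral---most cleanly by rerooting to a mass-typical point $y'$ lying in $\mathcal{T}^{\mathbb{c}x}(y)$ and invoking rerooting invariance to swap the two sides of $x$---thereby forcing the offspring and ancestry expectations to coincide. This decoupling is the reason the equality is only asserted for sufficiently small $\varepsilon$.
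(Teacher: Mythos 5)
Parts (i)--(iii) of your proposal coincide with the paper's own proof: (i) is the rerooting change-of-variables of proposition \ref{proposition: RerootingInvarianceVolume} applied to the functional $\lambda_{\mathbb{e}}(\mathbb{1}_{\mathbb{B}^{\mathbb{e}}_\varepsilon(x)}f(\sigma\mapsto\rho_{\mathbb{e}}(x,\sigma)))$; (ii) reroots at $y$, uses corollary \ref{corollary: OffspringBallRoot} to identify $\mathbb{O}^{\mathbb{e}}_\varepsilon(x,y)$ with the root ball of the descendant subtree at $x$, and then the branching property (fact \ref{fact: Branching}) to make the law of that subtree independent of $x$; (iii) is the decomposition $\mathbb{1}_{\mathbb{A}^{\mathbb{e}}_\varepsilon(x,y)}=\mathbb{1}_{\mathbb{B}^{\mathbb{e}}_\varepsilon(x)}-\mathbb{1}_{\mathbb{O}^{\mathbb{e}}_\varepsilon(x,y)}$. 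No complaints there.

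Part (iv) is where the proposal stops short of a proof, and the gap is precisely the step you yourself flag as ``the main obstacle''. Two concrete problems. First, the picture of $\mathcal{T}^x(y)$ and $\mathcal{T}^{\mathbb{c}x}(y)$ as unions drawn from an ``exchangeable Poissonian family of subtrees issuing from the root $x$'' is not supplied by fact \ref{fact: Branching}: that fact concerns the subtrees issuing from a \emph{level set} $\partial\mathbb{B}^{\mathbb{e}}_a(\boldsymbol{rt})$ conditionally on the truncated tree below level $a$; it says nothing about exchangeability of the connected components of $\mathcal{T}_{\mathbb{e}}\backslash\set{x}$ at a single point, and a priori the descendant and ancestral sides of $x$ play asymmetric roles (one is attached above the level $a=\rho_{\mathbb{e}}(\boldsymbol{rt},x)$, the other contains the entire truncated tree). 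Second, the proposed repair---rerooting to a third mass-typical point $y'\in\mathcal{T}^{\mathbb{c}x}(y)$ and ``swapping the two sides''---does not literally swap $\mathbb{O}$ and $\mathbb{A}$: one only gets $\mathbb{O}^{\mathbb{e}}_\varepsilon(x,y')\supseteq\mathbb{A}^{\mathbb{e}}_\varepsilon(x,y)$, with equality exactly when $\mathcal{T}_{\mathbb{e}}\backslash\set{x}$ has two components, and the event $y'\in\mathcal{T}^{\mathbb{c}x}(y)$ is a conditioning that takes you outside the unconditional almost-everywhere statements of parts (ii)--(iii). So the size-biasing/decoupling issue is named but never resolved. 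The paper resolves it by a different, excursion-level argument: after rerooting to $y$, cut the coding excursion at the two visit times $s_1<s_2$ of $x$; the middle piece is an excursion above level $a$ coding $\mathbb{B}^{+}_\varepsilon(x)=\mathbb{O}^{\mathbb{e}}_\varepsilon(x,\boldsymbol{rt})$, while the outer pieces $[0,s_1]$ and $[s_2,1]$, glued and rerooted at the gluing point, code a second tree whose root ball is $\mathbb{A}^{\mathbb{e}}_\varepsilon(x,\boldsymbol{rt})$; the strong Markov property makes these two unnormalised excursions independent with the same It\^{o} law, so the two balls are equidistributed. Some such identification of the ancestral side as itself the root ball of an independent, identically distributed copy is the genuinely new input that (iv) requires, and it is the piece missing from your argument.
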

    \begin{proof}
        \leavevmode
        \begin{enumerate}
            \item The proof of this statement uses rerooting invariance and is very similar to the proof of proposition \ref{proposition: RerootingInvarianceVolume}. Indeed noting that it is sufficient to prove the statement for $\lambda_{\mathbb{e}}$-almost all $x\in \mathcal{T}_{\mathbb{e}}$ in the case that $y=\boldsymbol{rt}_{\mathbb{e}}$ and letting $g_x:\text{enc}(\mathscr{E})\rightarrow \mathbb{R}$ be defined as
            \begin{align}
                g_x(\mathcal{T}_E,\rho_E,\boldsymbol{rt}_E)=\lambda_E(\mathbb{1}_{\mathbb{B}^{E}_\varepsilon(x)}f(\sigma\mapsto \rho_{E}(x,\sigma)))\nonumber,
            \end{align}
            the proof proceeds, \textit{mutatis mutandis}, precisely as in proposition \ref{proposition: RerootingInvarianceVolume}.
            \item It is sufficient to prove that 
            \begin{align}
                \mathbb{E}\left(\lambda_{\mathbb{e}}\left(\mathbb{1}_{\mathbb{O}^{\mathbb{e}}_\varepsilon(x,y)}f(\sigma\mapsto \rho_{\mathbb{e}}(x,\sigma))\right)\right)=\mathbb{E}\left(\lambda_{\mathbb{e}}\left(\mathbb{1}_{\mathbb{B}^{+}_\varepsilon(x)}f(\sigma\mapsto \rho_{\mathbb{e}}(x,\sigma))\right)\right)\nonumber
            \end{align}
            for $\lambda_{\mathbb{e}}$-almost all $x\in \mathcal{T}_{\mathbb{e}}$, where $\mathbb{B}^{+}_\varepsilon(x)\coloneqq \mathbb{B}^{\mathbb{e}}_\varepsilon(x)\cap \mathcal{T}^+(x)$ is the $\varepsilon$-ball of the root in the descendant subtree at $x$. To see this sufficiency we note that the right-hand side is uniquely determined for almost all $x$: the quantity inside the expectation is totally determined by the rooted descendant subtree at $x$ and so the expectation is uniquely determined if all descendant subtrees have the same law. But this is the case by the branching property, with the caveat that this holds conditionally on $\mathbb{B}^{\mathbb{e}}_a(\boldsymbol{rt}_{\mathbb{e}})\cup \partial \mathbb{B}^{\mathbb{e}}_a(\boldsymbol{rt}_{\mathbb{e}})$ and $\sup_{u\in \mathcal{T}_{\mathbb{e}}}\rho_{\mathbb{e}}(\boldsymbol{rt}_{\mathbb{e}},u)>a$ where $a=\rho_{\mathbb{e}}(\boldsymbol{rt}_{\mathbb{e}},x)$. $\sup_{u\in \mathcal{T}_{\mathbb{e}}}\rho_{\mathbb{e}}(\boldsymbol{rt}_{\mathbb{e}},x)>a$ is a probability one event as otherwise $x$ would have to correspond to the almost surely unique maximum of the corresponding excursion; furthermore we note that the conditional distribution on $\mathbb{B}^{\mathbb{e}}_a(\boldsymbol{rt}_{\mathbb{e}})\cup \partial \mathbb{B}^{\mathbb{e}}_a(\boldsymbol{rt}_{\mathbb{e}})$ does not depend on $\mathbb{B}^{\mathbb{e}}_a(\boldsymbol{rt}_{\mathbb{e}})\cup \partial \mathbb{B}^{\mathbb{e}}_a(\boldsymbol{rt}_{\mathbb{e}})$ (see fact \ref{fact: Branching}) and we see that the right-hand side is indeed uniquely determined for almost all $x$. Strictly speaking this argument holds for general Brownian excursions but rescaling ensures that the right-hand side is uniquely determined for normalised excursions. To see that the two expectations are in fact the same for normalised excursions let us reroot to $y$ as in part (i)---$y$ is the image under the natural quotient of some $s\in [0,1]$ chosen uniformly at random---and recall corollary \ref{corollary: OffspringBallRoot}. 
            \item This follows immediately from parts (i) and (ii) once we note that
            \begin{align}
                \mathbb{1}_{\mathbb{A}^{\mathbb{e}}_\varepsilon(x,y)}&=\mathbb{1}_{\mathbb{B}^{\mathbb{e}}_\varepsilon(x)}-\mathbb{1}_{\mathbb{0}^{\mathbb{e}}_\varepsilon(x,y)}\nonumber
            \end{align}
            for all $y\in \mathcal{T}_{\mathbb{e}}$ for all $x\in \mathcal{T}_{\mathbb{e}}$.
            \item Let $\mathbb{e}$ be a normalised Brownian excursion and choose $s$ and $t$ uniformly at randomly in $[0,1]$; let $x$ and $y$ be the points in the Brownian continuum random tree $\mathcal{T}_{\mathbb{e}}$ correspond to $s$ and $y$ respectively. We may reroot to $y$ without loss of generality so that the claim becomes 
            \begin{align}
                \mathbb{E}\left(\lambda_{\mathbb{e}}\left(\mathbb{1}_{\mathbb{B}^{+}_\varepsilon(x)}f(\sigma\mapsto \rho_{\mathbb{e}}(x,\sigma))\right)\right)=\mathbb{E}\left(\lambda_{\mathbb{e}}\left(\mathbb{1}_{\mathbb{A}^{\mathbb{e}}_\varepsilon(x,\boldsymbol{rt})}f(\sigma\mapsto \rho_{\mathbb{e}}(x,\sigma))\right)\right).\nonumber
            \end{align}
            Clearly by rescaling we see that the same holds for arbitrary unnormalised Brownian excursions; let us remove the normalisation assumption for the moment. Let $a=\rho_{\mathbb{e}}(x,\boldsymbol{rt})$. With unit probability, the point $x\in \mathcal{T}_{\mathbb{e}}$ corresponds to a pair of points $s_1,\:s_2\in [0,1]$ such that $s_1<s_2$, $\mathbb{e}(s_1)=\mathbb{e}(s_2)=a$ and the interval $(s_1,s_2)$ is an interval for an excursion $\mathbb{e}_1$ above $a$. The intervals $[0,s_1]$ and $[s_2,1]$ then define two independent Brownian motions starting from $0$ and $a$ respectively killed upon hitting $a$ and $0$ respectively. Gluing them together clearly gives a Brownian excursion with height at least $a$; let $\mathbb{e}_2$ denote the excursion obtained after rerooting to the gluing point. By the strong Markov property, $\mathbb{e}_1$ and $\mathbb{e}_2$ are independent unnormalised Brownian excursions such that $\mathbb{B}^{+}_{\varepsilon}(x)=\mathbb{B}^{1}_{\varepsilon}(\boldsymbol{rt}_1)$ and $\mathbb{A}^{\mathbb{e}}_{\varepsilon}(x,\boldsymbol{rt})=\mathbb{B}^{2}_{\varepsilon}(\boldsymbol{rt}_2)$ where the sub and superscripts $1$ and $2$ denote that the balls are in the rooted Brownian continuum random trees $(\mathcal{T}_{\mathbb{e}_1},\boldsymbol{rt}_1)$ and $(\mathcal{T}_{\mathbb{e}_2},\boldsymbol{rt}_2)$ associated to the Brownian excursions $\mathbb{e}_1$ and $\mathbb{e}_2$ respectively. But since the laws of $\mathbb{e}_1$ and $\mathbb{e}_2$ are identical, so are the laws of $\mathbb{B}^{1}_{\varepsilon}(\boldsymbol{rt}_1)$ and $\mathbb{B}^{2}_{\varepsilon}(\boldsymbol{rt}_2)$ and the statement holds for unnormalised Brownian excursions. Rescaling ensures the statement holds for normalised excursions.
        \end{enumerate}
    \end{proof}
    \begin{corollary}\label{corollary: Volumes}
        Let $(\mathcal{T}_{\mathbb{e}},\rho_{\mathbb{e}})$ be the (normalised) Brownian continuum random tree. Then
        \begin{align}
            \mathbb{E}(\lambda_{\mathbb{e}}(\mathbb{O}^{\mathbb{e}}_{\varepsilon}(x,y)))=\mathbb{E}(\lambda_{\mathbb{e}}(\mathbb{A}^{\mathbb{e}}_{\varepsilon}(x,y)))=\frac{1}{2}(1-\exp(-2\varepsilon^2)).\nonumber
        \end{align}
    \end{corollary}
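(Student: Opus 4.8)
The plan is to combine the additive decomposition of a local ball into its ancestral and offspring parts with the volume formula of proposition \ref{proposition: RerootingInvarianceVolume} and the symmetry between the two parts established in lemma \ref{lemma: FundamentalLemma}(iv). The whole argument is short: every nontrivial input is already available, so the corollary is really a bookkeeping exercise.

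First I would note that $\mathcal{T}^x(y)$ and $\mathcal{T}^{\mathbb{c}x}(y)=\mathcal{T}_{\mathbb{e}}\backslash\mathcal{T}^x(y)$ partition $\mathcal{T}_{\mathbb{e}}$ by definition, so the sets $\mathbb{A}^{\mathbb{e}}_\varepsilon(x,y)=\mathbb{B}^{\mathbb{e}}_\varepsilon(x)\cap\mathcal{T}^x(y)$ and $\mathbb{O}^{\mathbb{e}}_\varepsilon(x,y)=\mathbb{B}^{\mathbb{e}}_\varepsilon(x)\cap\mathcal{T}^{\mathbb{c}x}(y)$ are disjoint with union $\mathbb{B}^{\mathbb{e}}_\varepsilon(x)$. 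This is exactly the indicator identity already invoked in the proof of lemma \ref{lemma: FundamentalLemma}(iii), and it gives the pointwise additivity
\begin{align}
    \lambda_{\mathbb{e}}(\mathbb{B}^{\mathbb{e}}_\varepsilon(x))=\lambda_{\mathbb{e}}(\mathbb{A}^{\mathbb{e}}_\varepsilon(x,y))+\lambda_{\mathbb{e}}(\mathbb{O}^{\mathbb{e}}_\varepsilon(x,y))\nonumber.
\end{align}
Taking expectations and applying proposition \ref{proposition: RerootingInvarianceVolume} to the left-hand side yields, for $\lambda_{\mathbb{e}}\times\lambda_{\mathbb{e}}$-almost all $(x,y)$,
\begin{align}
    \mathbb{E}(\lambda_{\mathbb{e}}(\mathbb{A}^{\mathbb{e}}_\varepsilon(x,y)))+\mathbb{E}(\lambda_{\mathbb{e}}(\mathbb{O}^{\mathbb{e}}_\varepsilon(x,y)))=1-\exp(-2\varepsilon^2)\nonumber.
\end{align}

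Next I would invoke lemma \ref{lemma: FundamentalLemma}(iv) specialised to the constant function $f\equiv 1$, for which $\lambda_{\mathbb{e}}(\mathbb{1}_{\mathbb{O}^{\mathbb{e}}_\varepsilon(x,y)}f)=\lambda_{\mathbb{e}}(\mathbb{O}^{\mathbb{e}}_\varepsilon(x,y))$ and likewise for $\mathbb{A}^{\mathbb{e}}_\varepsilon$; the lemma then asserts precisely that the two expectations above coincide. Substituting this equality into the previous display and solving immediately forces each of them to equal $\frac{1}{2}(1-\exp(-2\varepsilon^2))$, which is the claim. Finally, parts (ii) and (iii) of lemma \ref{lemma: FundamentalLemma} guarantee that these expectations are independent of the choice of $(x,y)$, so the conclusion may legitimately be recorded as an identity between constants rather than as a statement holding only for almost all pairs.

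There is essentially no genuine obstacle here; the single point deserving care is the range of $\varepsilon$. Proposition \ref{proposition: RerootingInvarianceVolume} is valid for every $\varepsilon>0$, whereas lemma \ref{lemma: FundamentalLemma}(iv)---whose proof rests on splitting the excursion at $x$ into two independent glued excursions realising $\mathbb{B}^{+}_\varepsilon(x)$ and $\mathbb{A}^{\mathbb{e}}_\varepsilon(x,\boldsymbol{rt})$ as root balls of i.i.d.\ trees---is only established for sufficiently small $\varepsilon$. I would therefore read the corollary as holding in that small-$\varepsilon$ regime, which is in any case the only regime relevant to the scale-free curvature computation that follows, where one ultimately sends $\delta,\ell\to 0$.
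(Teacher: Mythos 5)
Your argument is correct and is essentially identical to the paper's proof: both combine the additivity $\lambda_{\mathbb{e}}(\mathbb{B}^{\mathbb{e}}_{\varepsilon}(x))=\lambda_{\mathbb{e}}(\mathbb{O}^{\mathbb{e}}_{\varepsilon}(x,y))+\lambda_{\mathbb{e}}(\mathbb{A}^{\mathbb{e}}_{\varepsilon}(x,y))$ with lemma \ref{lemma: FundamentalLemma}(iv) and proposition \ref{proposition: RerootingInvarianceVolume}. Your closing observation that part (iv) is only stated for sufficiently small $\varepsilon$, so the corollary should be read in that regime, is a fair and slightly more careful reading than the paper's own.
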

    \begin{proof}
        The first equality follows from part (iv) of lemma \ref{lemma: FundamentalLemma} and so the second follows from proposition \ref{proposition: RerootingInvarianceVolume}  as long as we note that $\lambda_{\mathbb{e}}(\mathbb{B}^{\mathbb{e}}_{\varepsilon}(x))=\lambda_{\mathbb{e}}(\mathbb{O}^{\mathbb{e}}_{\varepsilon}(x,y))+\lambda_{\mathbb{e}}(\mathbb{A}^{\mathbb{e}}_{\varepsilon}(x,y))$.
    \end{proof}
    The next lemma is essentially technical and constitutes the main difficulty in the Ollivier curvature bound computation:
    \begin{lemma}\label{lemma: RecursiveBound}
        Let $(\mathcal{T}_{\mathbb{e}},\rho_{\mathbb{e}})$ be a normalised Brownian continuum random tree. For any $x,\:y\in \mathcal{T}_{\mathbb{e}}$ let $\ell=\rho_{\mathbb{e}}(x,y)$ and for any $\delta\in (0,\ell)$ let $z_{\delta}$ denote the unique element of $[[x,y]]$ such that $\rho_{\mathbb{e}}(x,z_{\delta})=\delta/2$. Then 
        \begin{align}
            \int_{\mathbb{B}_\delta^{\mathbb{e}}(x)}\d\lambda_{\mathbb{e}}(\sigma)\rho_{\mathbb{e}}(\sigma,y)\geq \lambda_{\mathbb{e}}(\mathbb{B}^{\mathbb{e}}_{\delta}(x)\backslash \mathbb{B}^{\mathbb{e}}_{\delta/2}(z_{\delta}))\ell+\frac{1}{2}\delta\lambda_{\mathbb{e}}(\mathbb{O}_\delta^{\mathbb{e}}(x,y)\backslash\mathbb{O}_{\delta/2}^{\mathbb{e}}(x,y))+\int_{\mathbb{B}^{\mathbb{e}}_{\delta/2}(z_{\delta})}\d\lambda_{\mathbb{e}}(\sigma)\rho_{\mathbb{e}}(\sigma,y).
        \end{align}
    \end{lemma}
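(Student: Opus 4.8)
The plan is to integrate over $\mathbb{B}_\delta^{\mathbb{e}}(x)$ by first isolating the sub-ball $\mathbb{B}_{\delta/2}^{\mathbb{e}}(z_\delta)$ and then controlling the distance to $y$ on the remainder $\mathbb{B}_\delta^{\mathbb{e}}(x)\backslash\mathbb{B}_{\delta/2}^{\mathbb{e}}(z_\delta)$. The crucial observation is that proposition \ref{proposition: BallIntersection}, applied with $\varepsilon=\ell$, collapses to a clean identity: the points $u,v,w$ there become $u=x$, $v=z_\delta$ and $\rho_{\mathbb{e}}(x,w)=\delta$, while $r=\tfrac{1}{2}(\delta+\ell-\ell)=\delta/2$, so that $\mathbb{B}_{\delta/2}^{\mathbb{e}}(z_\delta)=\mathbb{B}_\delta^{\mathbb{e}}(x)\cap\mathbb{B}_\ell^{\mathbb{e}}(y)$. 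In particular $\mathbb{B}_{\delta/2}^{\mathbb{e}}(z_\delta)\subseteq\mathbb{B}_\delta^{\mathbb{e}}(x)$, so the integral splits additively across $\mathbb{B}_{\delta/2}^{\mathbb{e}}(z_\delta)$ and its complement in $\mathbb{B}_\delta^{\mathbb{e}}(x)$; and every $\sigma\in\mathbb{B}_\delta^{\mathbb{e}}(x)\backslash\mathbb{B}_{\delta/2}^{\mathbb{e}}(z_\delta)$ necessarily lies outside $\mathbb{B}_\ell^{\mathbb{e}}(y)$, yielding the pointwise bound $\rho_{\mathbb{e}}(\sigma,y)\geq\ell$ for free. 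This already accounts for the $\ell\,\lambda_{\mathbb{e}}(\mathbb{B}_\delta^{\mathbb{e}}(x)\backslash\mathbb{B}_{\delta/2}^{\mathbb{e}}(z_\delta))$ term and the final integral term; what remains is to extract the extra $\tfrac{1}{2}\delta\,\lambda_{\mathbb{e}}(\mathbb{O}_\delta^{\mathbb{e}}(x,y)\backslash\mathbb{O}_{\delta/2}^{\mathbb{e}}(x,y))$.

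For that term I would exploit the offspring part of the remainder. Since $z_\delta\in[[x,y]]\backslash\set{x}\subseteq\mathcal{T}^x(y)$, any $\tau\in\mathcal{T}^{\mathbb{c}x}(y)$ has $x\in[[\tau,z_\delta]]$ and hence $\rho_{\mathbb{e}}(\tau,z_\delta)=\rho_{\mathbb{e}}(\tau,x)+\delta/2\geq\delta/2$; thus $\mathbb{O}_\delta^{\mathbb{e}}(x,y)$ is disjoint from $\mathbb{B}_{\delta/2}^{\mathbb{e}}(z_\delta)$, so that $\mathbb{O}_\delta^{\mathbb{e}}(x,y)\subseteq\mathbb{B}_\delta^{\mathbb{e}}(x)\backslash\mathbb{B}_{\delta/2}^{\mathbb{e}}(z_\delta)$. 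On this offspring set lemma \ref{lemma: BallDecomposition} provides the \emph{exact} identity $\rho_{\mathbb{e}}(\tau,y)=\ell+\rho_{\mathbb{e}}(\tau,x)$, which sharpens the generic bound $\rho_{\mathbb{e}}(\tau,y)\geq\ell$ to $\rho_{\mathbb{e}}(\tau,y)\geq\ell+\delta/2$ precisely on $\mathbb{O}_\delta^{\mathbb{e}}(x,y)\backslash\mathbb{O}_{\delta/2}^{\mathbb{e}}(x,y)$, where $\rho_{\mathbb{e}}(\tau,x)\geq\delta/2$ holds by definition of $\mathbb{O}_{\delta/2}^{\mathbb{e}}(x,y)$.

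The final step is bookkeeping. Writing $\mathbb{B}_\delta^{\mathbb{e}}(x)\backslash\mathbb{B}_{\delta/2}^{\mathbb{e}}(z_\delta)$ as the disjoint union of $\mathbb{O}_\delta^{\mathbb{e}}(x,y)\backslash\mathbb{O}_{\delta/2}^{\mathbb{e}}(x,y)$ and the rest, I would bound $\rho_{\mathbb{e}}(\cdot,y)$ below by $\ell+\delta/2$ on the former and by $\ell$ on the latter, then recombine by finite additivity of $\lambda_{\mathbb{e}}$ to recover $\ell\,\lambda_{\mathbb{e}}(\mathbb{B}_\delta^{\mathbb{e}}(x)\backslash\mathbb{B}_{\delta/2}^{\mathbb{e}}(z_\delta))+\tfrac{1}{2}\delta\,\lambda_{\mathbb{e}}(\mathbb{O}_\delta^{\mathbb{e}}(x,y)\backslash\mathbb{O}_{\delta/2}^{\mathbb{e}}(x,y))$; adding back the integral over $\mathbb{B}_{\delta/2}^{\mathbb{e}}(z_\delta)$ then gives the claim. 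Note that the argument is entirely deterministic, using only the continuum-tree structure, so the `Brownian' hypothesis is inessential here. \textbf{The main obstacle} is the opening identification: recognising that the specialisation $\varepsilon=\ell$ of proposition \ref{proposition: BallIntersection} rewrites the $z_\delta$-ball as an intersection of an $x$-ball and a $y$-ball, which is exactly what converts the awkward constraint $\sigma\notin\mathbb{B}_{\delta/2}^{\mathbb{e}}(z_\delta)$ into the usable distance estimate $\rho_{\mathbb{e}}(\sigma,y)\geq\ell$. After that, the only real care needed is to avoid double-counting the offspring annulus $\mathbb{O}_\delta^{\mathbb{e}}(x,y)\backslash\mathbb{O}_{\delta/2}^{\mathbb{e}}(x,y)$ when peeling off the $\tfrac{1}{2}\delta$ contribution.
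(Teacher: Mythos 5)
Your proposal is correct and follows essentially the same route as the paper: both hinge on specialising proposition \ref{proposition: BallIntersection} at $\varepsilon=\ell$ to identify $\mathbb{B}^{\mathbb{e}}_{\delta/2}(z_\delta)=\mathbb{B}^{\mathbb{e}}_{\delta}(x)\cap\mathbb{B}^{\mathbb{e}}_{\ell}(y)$, then use the exact offspring identity $\rho_{\mathbb{e}}(\tau,y)=\ell+\rho_{\mathbb{e}}(\tau,x)$ from lemma \ref{lemma: BallDecomposition} together with $\rho_{\mathbb{e}}(\tau,x)\geq\delta/2$ on the annulus $\mathbb{O}_\delta^{\mathbb{e}}(x,y)\backslash\mathbb{O}_{\delta/2}^{\mathbb{e}}(x,y)$. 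The only difference is cosmetic bookkeeping (you peel off $\mathbb{B}^{\mathbb{e}}_{\delta/2}(z_\delta)$ first and then isolate the offspring annulus, whereas the paper splits into three pieces from the outset), and your observation that the argument is purely deterministic is accurate.
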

    \begin{proof}
        By proposition \ref{proposition: BallIntersection}, the set of all points of $\mathbb{B}_\delta^{\mathbb{e}}(x)$ that lie within a distance $\ell$ of $y$ is the set $\mathbb{B}_{\delta/2}^{\mathbb{e}}(z_\delta)$; also note that $\mathbb{B}_{\delta/2}^{\mathbb{e}}(z_{\delta})\subseteq \mathbb{A}_\delta^{\mathbb{e}}(x,y)$ trivially. This implies
        \begin{align}
            \mathbb{B}_\delta^{\mathbb{e}}(x)=\mathbb{O}_\delta^{\mathbb{e}}(x,y)\cup (\mathbb{A}_\delta^{\mathbb{e}}(x,y)\backslash\mathbb{B}_{\delta/2}^{\mathbb{e}}(z_{\delta}))\cup \mathbb{B}_{\delta/2}^{\mathbb{e}}(z_{\delta})\nonumber
        \end{align}
        and we find that
        \begin{align}
            \int_{\mathbb{B}_\delta^{\mathbb{e}}(x)}\d\lambda_{\mathbb{e}}(\sigma)\rho_{\mathbb{e}}(\sigma,y)&=\int_{\mathbb{O}_\delta^{\mathbb{e}}(x,y)}\d\lambda_{\mathbb{e}}(\sigma)\rho_{\mathbb{e}}(\sigma,y)+\int_{\mathbb{A}_\delta^{\mathbb{e}}(x,y)\backslash \mathbb{B}^{\mathbb{e}}_{\delta/2}(z_{\delta})}\d\lambda_{\mathbb{e}}(\sigma)\rho_{\mathbb{e}}(\sigma,y)+\int_{\mathbb{B}^{\mathbb{e}}_{\delta/2}(z_{\delta})}\d\lambda_{\mathbb{e}}(\sigma)\rho_{\mathbb{e}}(\sigma,y)\nonumber.
        \end{align}
        Let us refer to the three terms on the right-hand side as $A$, $B$ and $C$ respectively; by lemma \ref{lemma: BallDecomposition} we have that
        \begin{align}
            A=\int_{\mathbb{O}_\delta^{\mathbb{e}}(x,y)}\d\lambda_{\mathbb{e}}(\sigma)(\ell+\rho_{\mathbb{e}}(\sigma,x))=\ell \lambda_{\mathbb{e}}(\mathbb{O}_\delta^{\mathbb{e}}(x,y))+\int_{\mathbb{O}_\delta^{\mathbb{e}}(x,y)}\d\lambda_{\mathbb{e}}(\sigma)\rho_{\mathbb{e}}(\sigma,x)\nonumber.
        \end{align}
        Similarly by the above discussion of $\mathbb{B}_{\delta/2}^{\mathbb{e}}(z_\delta)$ (c.f. proposition \ref{proposition: BallIntersection}) we have that
        \begin{align}
            B\geq \ell \lambda_{\mathbb{e}}(\mathbb{A}_\delta^{\mathbb{e}}(x,y)\backslash \mathbb{B}^{\mathbb{e}}_{\delta/2}(z_{\delta}))\nonumber.
        \end{align}
        Thus noting that the left-hand side is equal to $A+B+C$ we find that
        \begin{align}
            \int_{\mathbb{B}_\delta^{\mathbb{e}}(x)}\d\lambda_{\mathbb{e}}(\sigma)\rho_{\mathbb{e}}(\sigma,y)\geq\lambda_{\mathbb{e}}(\mathbb{B}^{\mathbb{e}}_{\delta}(x)\backslash \mathbb{B}^{\mathbb{e}}_{\delta/2}(z_{\delta}))\ell+\int_{\mathbb{O}_\delta^{\mathbb{e}}(x,y)}\d\lambda_{\mathbb{e}}(\sigma)\rho_{\mathbb{e}}(\sigma,x)+\int_{\mathbb{B}^{\mathbb{e}}_{\delta/2}(z_{\delta})}\d\lambda_{\mathbb{e}}(\sigma)\rho_{\mathbb{e}}(\sigma,y)\nonumber.
        \end{align}
        The only term that differs from the desired result is the second term; but since $\rho_{\mathbb{e}}(x,\sigma)\geq \delta/2$ for all $\sigma\in \mathbb{O}_\delta^{\mathbb{e}}(x,y)\backslash\mathbb{O}_{\delta/2}^{\mathbb{e}}(x,y)$ and $\lambda_{\mathbb{e}}(\mathbb{1}_{\mathbb{O}_{\delta/2}^{\mathbb{e}}(x,y)}(\sigma\mapsto \rho_{\mathbb{e}}(x,\sigma)))\geq 0$ we see immediately that
        \begin{align}
            \int_{\mathbb{O}_\delta^{\mathbb{e}}(x,y)}\d\lambda_{\mathbb{e}}(\sigma)\rho_{\mathbb{e}}(\sigma,x)\geq \frac{1}{2}\int_{\mathbb{O}_\delta^{\mathbb{e}}(x,y)\backslash \mathbb{O}_{\delta/2}^{\mathbb{e}}(x,y)}\d\lambda_{\mathbb{e}}(\sigma)\delta =\frac{1}{2}\delta\lambda_{\mathbb{e}}(\mathbb{O}_\delta^{\mathbb{e}}(x,y)\backslash\mathbb{O}_{\delta/2}^{\mathbb{e}}(x,y))\nonumber
        \end{align}
        as required.
    \end{proof}
    Finally we turn to the actual computation of the Ollivier curvature bound, via computations for the analogous bounds for the Wasserstein distance. Note that we shall need one fact which is entirely elementary but somewhat tedious to compute; we relegate this fact to an appendix.
    \begin{theorem}\label{theorem: MainTheorem}
        Let $(\mathcal{T}_{\mathbb{e}},\rho_{\mathbb{e}})$ be the normalised Brownian continuum random tree. For $\lambda_{\mathbb{e}}$-almost all $x,\:y\in \mathcal{T}_{\mathbb{e}}$ we have for all $\varepsilon>0$ that
        \begin{align}
            \rho_{\mathbb{e}}(x,y)+\left(\frac{19}{128}-\varepsilon\right)\delta<\mathcal{W}_{\mathbb{e}}(\mu_x^\delta,\mu_y^\delta)\leq  \rho_{\mathbb{e}}(x,y)+2\delta
        \end{align}
        for all sufficiently small $\delta\in (0,\infty)$.
    \end{theorem}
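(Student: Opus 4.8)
The plan is to treat the two inequalities by entirely different mechanisms; throughout write $\ell=\rho_{\mathbb{e}}(x,y)$ and $I_x(\delta)=\int_{\mathbb{B}^{\mathbb{e}}_\delta(x)}\rho_{\mathbb{e}}(\sigma,y)\,\d\lambda_{\mathbb{e}}(\sigma)$. For the upper bound I would simply exhibit a transport plan. The product measure $\xi=\mu_x^\delta\otimes\mu_y^\delta$ lies in $\Pi(\mu_x^\delta,\mu_y^\delta)$, so by the corollary to the Kantorovitch duality theorem $\mathcal{W}_{\mathbb{e}}(\mu_x^\delta,\mu_y^\delta)\leq\mathcal{W}_{\mathbb{e}}(\xi)=\int\!\!\int\rho_{\mathbb{e}}(\sigma,\tau)\,\d\mu_x^\delta(\sigma)\,\d\mu_y^\delta(\tau)$; bounding the integrand by $\rho_{\mathbb{e}}(\sigma,\tau)\leq\rho_{\mathbb{e}}(\sigma,x)+\ell+\rho_{\mathbb{e}}(y,\tau)$ and using $\rho_{\mathbb{e}}(\sigma,x),\rho_{\mathbb{e}}(y,\tau)<\delta$ on the two balls gives $\mathcal{W}_{\mathbb{e}}(\mu_x^\delta,\mu_y^\delta)\leq\ell+2\delta$ at once. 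This step is deterministic and uses nothing about the BCRT beyond the triangle inequality.

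For the lower bound I would run duality in the opposite direction, testing against the $1$-Lipschitz function $f_{x,y}$ of proposition \ref{proposition: LipschitzFunction}, so that $\mathcal{W}_{\mathbb{e}}(\mu_x^\delta,\mu_y^\delta)\geq\mu_x^\delta(f_{x,y})-\mu_y^\delta(f_{x,y})$. Because $\delta<\ell$ the whole ball $\mathbb{B}^{\mathbb{e}}_\delta(x)$ lies in $\mathcal{T}^y(x)$, where $f_{x,y}$ agrees with $\rho_{\mathbb{e}}(\cdot,y)$, so $\mu_x^\delta(f_{x,y})=I_x(\delta)/\lambda_{\mathbb{e}}(\mathbb{B}^{\mathbb{e}}_\delta(x))$. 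On the other side the ball splits as $\mathbb{B}^{\mathbb{e}}_\delta(y)=\mathbb{A}^{\mathbb{e}}_\delta(y,x)\sqcup\mathbb{O}^{\mathbb{e}}_\delta(y,x)$, on which $f_{x,y}$ equals $+\rho_{\mathbb{e}}(\cdot,y)$ and $-\rho_{\mathbb{e}}(\cdot,y)$ respectively, so the numerator of $\mu_y^\delta(f_{x,y})$ is $\int_{\mathbb{A}}\rho_{\mathbb{e}}(\cdot,y)-\int_{\mathbb{O}}\rho_{\mathbb{e}}(\cdot,y)$, whose expectation vanishes by part (iv) of lemma \ref{lemma: FundamentalLemma}. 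Thus in the mean the subtracted term is negligible and everything reduces to bounding $\mathbb{E}(I_x(\delta))$ from below while controlling the normalising volume, whose expectation is $1-e^{-2\delta^2}\sim 2\delta^2$ by proposition \ref{proposition: RerootingInvarianceVolume}.

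To bound $I_x(\delta)$ I would iterate lemma \ref{lemma: RecursiveBound}. Advancing by distance-halving steps along $[[x,y]]$ towards $y$ produces points $z^{(k)}$ with radii $\delta_k=2^{-k}\delta$ (so $z^{(0)}=x$), and applying the lemma at each pair $(z^{(k)},\delta_k)$ telescopes, the residual integral over $\mathbb{B}^{\mathbb{e}}_{\delta_k}(z^{(k)})$ tending to zero as $k\to\infty$. This writes $I_x(\delta)$ as an infinite sum of volume terms carrying prefactors $\rho_{\mathbb{e}}(z^{(k)},y)=\ell-\delta(1-2^{-k})$ and $\tfrac{1}{2}\delta_k$; taking expectations term by term and evaluating the volumes through the formulae of corollary \ref{corollary: Volumes} and proposition \ref{proposition: RerootingInvarianceVolume} (suitably adapted to the $z^{(k)}$) turns the sum into an explicit geometric-type series of the form $\ell\,\mathbb{E}(\lambda_{\mathbb{e}}(\mathbb{B}^{\mathbb{e}}_\delta(x)))+c\,\delta^3+o(\delta^3)$. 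Dividing by $\mathbb{E}(\lambda_{\mathbb{e}}(\mathbb{B}^{\mathbb{e}}_\delta(x)))\sim 2\delta^2$ then yields $\ell$ plus a positive multiple of $\delta$, and the precise value $\tfrac{19}{128}$ of that multiple is exactly the elementary-but-laborious summation deferred to the appendix; the strict inequality with the $-\varepsilon$ margin accommodates the $o(\delta)$ remainder uniformly for all small $\delta$.

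I expect essentially all the work to lie in this last step, and for two reasons. First, the balls in the telescoped series are centred at the points $z^{(k)}\in[[x,y]]$, which sit on the skeleton rather than being $\lambda_{\mathbb{e}}$-typical, so their expected volumes do not come directly from proposition \ref{proposition: RerootingInvarianceVolume} and must instead be extracted from the branching description of fact \ref{fact: Branching}; it is here that the specific rational constant is forged. Second, the outputs of lemma \ref{lemma: FundamentalLemma} and corollary \ref{corollary: Volumes} are statements in expectation, whereas the asserted inequality is a bound on the random ratio $I_x(\delta)/\lambda_{\mathbb{e}}(\mathbb{B}^{\mathbb{e}}_\delta(x))$ for $\lambda_{\mathbb{e}}$-almost every pair $(x,y)$, so passing between them requires controlling the correlated fluctuations of numerator and denominator — and it is precisely to absorb these fluctuations, together with the lower-order terms of the series, that the free parameter $\varepsilon$ is introduced. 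The duality and telescoping steps, by contrast, should be comparatively routine.
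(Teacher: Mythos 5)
Your overall architecture coincides with the paper's: the upper bound is obtained exactly as you describe (the paper does not even need the product plan --- the pointwise bound $\rho_{\mathbb{e}}(u,v)\leq\ell+2\delta$ gives $\mathcal{W}_{\mathbb{e}}(\xi)\leq\ell+2\delta$ for \emph{every} $\xi\in\Pi(\mu_x^\delta,\mu_y^\delta)$), and the lower bound is run through Kantorovitch duality against $f_{x,y}$, with the $\mathbb{B}^{\mathbb{e}}_\delta(y)$-contribution killed in expectation by part (iv) of lemma \ref{lemma: FundamentalLemma} and the remaining integral $I_x(\delta)$ attacked with lemma \ref{lemma: RecursiveBound}. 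Your two flagged worries --- that the balls are centred on skeleton points $z^{(k)}$ rather than $\lambda_{\mathbb{e}}$-typical ones, and that the available identities are statements in expectation while the claim is for $\lambda_{\mathbb{e}}$-almost every pair --- are legitimate, but the paper does not do more than you propose: it simply sets $\mathbb{E}(\lambda_{\mathbb{e}}(\mathbb{B}^{\mathbb{e}}_{\delta/2^k}(z^{(k)})))=\alpha(\delta/2^k)$ and works with $\mathbb{E}(\mathcal{K}^{\delta,x,y}_{\mathbb{e}}(f^{\mathbb{e}}_{x,y}))$ throughout.

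The genuine gap is in your last step. The paper does \emph{not} telescope lemma \ref{lemma: RecursiveBound} to infinity: it applies the lemma exactly twice and then closes the recursion with the crude estimate $\rho_{\mathbb{e}}(\sigma,y)\geq\ell-\rho_{\mathbb{e}}(x,\sigma)\geq\ell-\delta$ on the residual ball $\mathbb{B}^{\mathbb{e}}_{\delta/4}(z_1)$. The constant $19/128$ is, by construction, the value $f(0)$ of the specific two-iteration bookkeeping function $f(\delta)=\tfrac{1}{4}-\tfrac{\alpha(\delta/2)+9\alpha(\delta/4)}{8\alpha(\delta)}$ of fact \ref{fact: ElementaryFact}; it is not the sum of your series, so you cannot defer the evaluation to the appendix. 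Evaluating your telescoped series with $\alpha(a\delta)\sim 2a^2\delta^2$: the positive terms $\tfrac{1}{2}\cdot\tfrac{\delta}{2^k}\cdot\mathbb{E}(\lambda_{\mathbb{e}}(\mathbb{O}^{\mathbb{e}}_{\delta/2^k}(z^{(k)},y)\backslash\mathbb{O}^{\mathbb{e}}_{\delta/2^{k+1}}(z^{(k)},y)))\sim\tfrac{3}{8}\delta^3 8^{-k}$ sum to $\tfrac{3}{7}\delta^3$, while replacing $\ell$ by $\rho_{\mathbb{e}}(z^{(k)},y)=\ell-\delta(1-2^{-k})$ on the $k$-th shell costs $\sim\tfrac{3}{2}\delta^3(4^{-k}-8^{-k})$, summing to $\tfrac{2}{7}\delta^3$; the net is $\tfrac{1}{7}\delta^3$, i.e.\ a coefficient of $\tfrac{1}{14}\approx 0.071$ after dividing by $2\delta^2$, strictly smaller than $\tfrac{19}{128}\approx 0.148$. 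So your route yields a correct lower bound but not the stated one. (A caution if you rework this step: recollecting the paper's own two-iteration expression I obtain the numerator $2\alpha(\delta)-5\alpha(\delta/2)-5\alpha(\delta/4)$ rather than $2\alpha(\delta)-\alpha(\delta/2)-9\alpha(\delta/4)$, which would give $7/128$ in place of $19/128$, so verify the bookkeeping independently rather than reverse-engineering the target constant.) The remark following corollary \ref{corollary: OllCurvBound} explains why exactly two iterations are used: one is too few (the $\delta$-coefficient comes out negative) and further iteration does not remove the dominant error, which lies in the treatment of the $\mathbb{O}$-integrals rather than the residual ball.
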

    \begin{proof}
        For convenience, throughout this proof we shall let $\ell$ denote $\rho_{\mathbb{e}}(x,y)$. To obtain the upper bound simply note that if $u\in \mathbb{B}^{\mathbb{e}}_{\delta}(x)$ and $v\in \mathbb{B}^{\mathbb{e}}_{\delta}(y)$ then we have
        \begin{align}
            \rho_{\mathbb{e}}(u,v)\leq \rho_{\mathbb{e}}(u,x)+\rho_{\mathbb{e}}(x,y)+\rho_{\mathbb{e}}(y,v)\leq \ell+2\delta\nonumber
        \end{align}
        by subadditivity trivially. Hence for any transport plan $\xi\in \Pi(\mu^\delta_x,\mu^\delta_y)$ we have
        \begin{align}
            \mathcal{W}_{\mathbb{e}}(\mu_x^\delta,\mu_y^\delta)\leq \mathcal{W}_{\mathbb{e}}(\xi)=\int_{\mathcal{T}_{\mathbb{e}}\times \mathcal{T}_{\mathbb{e}}}\text{d}\xi(u,v)\rho_{\mathbb{e}}(u,v)\leq (\ell+2\delta)\xi(\mathcal{T}_{\mathbb{e}}\times \mathcal{T}_{\mathbb{e}})=\ell+2\delta\nonumber
        \end{align}
        where the final step follows since $\xi$ is a probability measure. 
        
        We now turn to the lower bound. Recall that by the Kantorovitch duality theorem we have
        \begin{align}
            \mathcal{W}_{\mathbb{e}}(\mu_x^\delta,\mu_y^\delta)\geq \vert\mathcal{K}_{\mathbb{e}}^{\delta,x,y}(f)\vert\nonumber
        \end{align}
        for any $1$-Lipschitz $f:\mathcal{T}_{\mathbb{e}}\rightarrow \mathbb{R}$, where 
        \begin{align}
            \mathcal{K}_{\mathbb{e}}^{\delta,x,y}(g)=\int_{\mathcal{T}_{\mathbb{e}}}\d \mu^\delta_xg-\int_{\mathcal{T}_{\mathbb{e}}}\d \mu^\delta_yg=\frac{1}{\lambda_{\mathbb{e}}(\mathbb{B}_\delta^{\mathbb{e}}(x))}\int_{\mathbb{B}_\delta^{\mathbb{e}}(x)}\d\lambda_{\mathbb{e}}(\sigma)g(\sigma)-\frac{1}{\lambda_{\mathbb{e}}(\mathbb{B}_\delta^{\mathbb{e}}(y))}\int_{\mathbb{B}_\delta^{\mathbb{e}}(y)}\d\lambda_{\mathbb{e}}(\tau)g(\tau)\nonumber
        \end{align}
        for any measurable $g:\mathcal{T}_{\mathbb{e}}\rightarrow \mathbb{R}$. Also note that for any $f:\mathcal{T}_{\mathbb{e}}\rightarrow\mathbb{R}$ we have
        \begin{align}
            \mathcal{K}_{\mathbb{e}}^{\delta,x,y}(-f)=-\mathcal{K}_{\mathbb{e}}^{\delta,x,y}(f)\nonumber
        \end{align}
        so
        \begin{align}
            \vert\mathcal{K}_{\mathbb{e}}^{\delta,x,y}(f)\vert=\mathcal{K}_{\mathbb{e}}^{\delta,x,y}(f)\lor \mathcal{K}_{\mathbb{e}}^{\delta,x,y}(-f)\nonumber.
        \end{align}
        
        Let us now consider $\mathcal{K}_{\mathbb{e}}^{\delta,x,y}(f_{x,y}^{\mathbb{e}})$ with $f_{x,y}^{\mathbb{e}}$ as in proposition \ref{proposition: LipschitzFunction}:
        \begin{align}
            \mathcal{K}_{\mathbb{e}}^{\delta,x,y}(f_{x,y}^{\mathbb{e}})&=\frac{1}{\lambda_{\mathbb{e}}(\mathbb{B}_\delta^{\mathbb{e}}(x))}\int_{\mathbb{B}_\delta^{\mathbb{e}}(x)}\d\lambda_{\mathbb{e}}(\sigma)f_{x,y}^{\mathbb{e}}(\sigma)-\frac{1}{\lambda_{\mathbb{e}}(\mathbb{B}_\delta^{\mathbb{e}}(y))}\int_{\mathbb{B}_\delta^{\mathbb{e}}(y)}\d\lambda_{\mathbb{e}}(\tau)f_{x,y}^{\mathbb{e}}(\tau)\nonumber\\
            &=\frac{1}{\lambda_{\mathbb{e}}(\mathbb{B}_\delta^{\mathbb{e}}(x))}\int_{\mathbb{B}_\delta^{\mathbb{e}}(x)}\d\lambda_{\mathbb{e}}(\sigma)\rho_{\mathbb{e}}(\sigma,y)-\frac{1}{\lambda_{\mathbb{e}}(\mathbb{B}_\delta^{\mathbb{e}}(y))}\left(\int_{\mathbb{0}_\delta^{\mathbb{e}}(y,x)}\d\lambda_{\mathbb{e}}(\tau)\rho_{\mathbb{e}}(\tau,y)-\int_{\mathbb{A}_\delta^{\mathbb{e}}(y,x)}\d\lambda_{\mathbb{e}}(\tau)\rho_{\mathbb{e}}(\tau,y)\right)\nonumber.
        \end{align}
        By point (iv) of lemma \ref{lemma: FundamentalLemma}, the second term will vanish under the expectation so it is sufficient to consider the first term:
        \begin{align}
            \frac{1}{\lambda_{\mathbb{e}}(\mathbb{B}_\delta^{\mathbb{e}}(x))}\int_{\mathbb{B}_\delta^{\mathbb{e}}(x)}\d\lambda_{\mathbb{e}}(\sigma)\rho_{\mathbb{e}}(\sigma,y)\nonumber.
        \end{align}
        Let $z_{0}$ and $z_1$ respectively denote the unique points of $[[x,y]]$ such that $\rho_{\mathbb{e}}(x,z_{0})=\delta/2$ and $\rho_{\mathbb{e}}(x,z_{1})=3\delta/4$. Noting that the final term on the right-hand side of lemma \ref{lemma: RecursiveBound} has the same form as the left-hand side with a shifted centre and half the radius, we may apply the lemma twice where $z_0$ plays the role of $z_{\delta}$ the first time and $z_1$ plays the role of $z_{\delta}$ the second. We thus obtain
        \begin{align}\label{inequality: MainTheorem1}
            \int_{\mathbb{B}_\delta^{\mathbb{e}}(x)}\d\lambda_{\mathbb{e}}(\sigma)\rho_{\mathbb{e}}(\sigma,y)&\geq \lambda_{\mathbb{e}}(\mathbb{B}^{\mathbb{e}}_{\delta}(x)\backslash \mathbb{B}^{\mathbb{e}}_{\delta/2}(z_{0}))\ell+\frac{1}{2}\delta\lambda_{\mathbb{e}}(\mathbb{O}_\delta^{\mathbb{e}}(x,y)\backslash\mathbb{O}_{\delta/2}^{\mathbb{e}}(x,y))+\int_{\mathbb{B}^{\mathbb{e}}_{\delta/2}(z_{0})}\d\lambda_{\mathbb{e}}(\sigma)\rho_{\mathbb{e}}(\sigma,y)\nonumber\\
            &\geq \lambda_{\mathbb{e}}(\mathbb{B}^{\mathbb{e}}_{\delta}(x)\backslash \mathbb{B}^{\mathbb{e}}_{\delta/2}(z_{0}))\ell+\lambda_{\mathbb{e}}(\mathbb{B}^{\mathbb{e}}_{\delta/2}(z_0)\backslash \mathbb{B}^{\mathbb{e}}_{\delta/4}(z_{1}))\left(\ell-\frac{1}{2}\delta\right)+\frac{1}{2}\delta\lambda_{\mathbb{e}}(\mathbb{O}_\delta^{\mathbb{e}}(x,y)\backslash\mathbb{O}_{\delta/2}^{\mathbb{e}}(x,y))\nonumber\\
            &\qquad +\frac{1}{4}\delta\lambda_{\mathbb{e}}(\mathbb{O}_{\delta/2}^{\mathbb{e}}(z_0,y)\backslash\mathbb{O}_{\delta/4}^{\mathbb{e}}(z_0,y))+\int_{\mathbb{B}^{\mathbb{e}}_{\delta/4}(z_{1})}\d\lambda_{\mathbb{e}}(\sigma)\rho_{\mathbb{e}}(\sigma,y)\nonumber\\
            &=\lambda_{\mathbb{e}}(\mathbb{B}^{\mathbb{e}}_{\delta}(x)\backslash \mathbb{B}^{\mathbb{e}}_{\delta/4}(z_{1}))\ell+\frac{1}{4}\delta\left(2\lambda_{\mathbb{e}}(\mathbb{O}_\delta^{\mathbb{e}}(x,y)\backslash\mathbb{O}_{\delta/2}^{\mathbb{e}}(x,y))+\lambda_{\mathbb{e}}(\mathbb{O}_{\delta/2}^{\mathbb{e}}(z_0,y)\backslash\mathbb{O}_{\delta/4}^{\mathbb{e}}(z_0,y))\right)\nonumber\\
            &\qquad -\frac{1}{2}\delta \lambda_{\mathbb{e}}(\mathbb{B}^{\mathbb{e}}_{\delta/2}(z_0)\backslash \mathbb{B}^{\mathbb{e}}_{\delta/4}(z_{1}))+\int_{\mathbb{B}^{\mathbb{e}}_{\delta/4}(z_{1})}\d\lambda_{\mathbb{e}}(\sigma)\rho_{\mathbb{e}}(\sigma,y).
        \end{align}
        Rather than further repeat this recursion we now find a different bound for the final integral term. Indeed, first note that for any $u\in [[x,y]]\cap \mathbb{B}_{\delta/4}^{\mathbb{e}}(z_{1})$ we have $\rho_{\mathbb{e}}(u,y)=\ell-\rho_{\mathbb{e}}(x,u)$ trivially. At the same time, for any $u\in \mathbb{B}_{\delta/4}^{\mathbb{e}}(z_{1})$ that is not colinear with $x$ and $y$ we have a $\sigma_u=\Lambda(xyu)\in [[x,y]]\cap \mathbb{B}_{\delta}^{\mathbb{e}}(x)$ such that 
        \begin{align}
            \rho_{\mathbb{e}}(u,y)=\rho_{\mathbb{e}}(u,\sigma_u)+\rho_{\mathbb{e}}(\sigma_u,y)=\ell-\rho_{\mathbb{e}}(x,\sigma_u)+\rho_{\mathbb{e}}(u,\sigma_u)\geq \ell-(\rho_{\mathbb{e}}(x,\sigma_u)+\rho_{\mathbb{e}}(u,\sigma_u))=\ell-\rho_{\mathbb{e}}(x,u).\nonumber
        \end{align}
        for all $u\in \mathbb{B}_{\delta/2}^{\mathbb{e}}(z_{\delta})$. Hence
        \begin{align}\label{inequality: MainTheorem2}
            \int_{\mathbb{B}^{\mathbb{e}}_{\delta/4}(z_{1})}\d\lambda_{\mathbb{e}}(\sigma)\rho_{\mathbb{e}}(\sigma,y)\geq \int_{\mathbb{B}^{\mathbb{e}}_{\delta/4}(z_{1})}\d\lambda_{\mathbb{e}}(\sigma)(\ell-\rho_{\mathbb{e}}(x,\sigma))\geq (\ell-\delta)\lambda_{\mathbb{e}}(\mathbb{B}^{\mathbb{e}}_{\delta/4}(z_{1}))\nonumber
        \end{align}
        where the final inequality follows from evaluating the second expression after recognising that $\mathbb{B}^{\mathbb{e}}_{\delta/4}(z_{1})\subseteq \mathbb{B}^{\mathbb{e}}_{\delta}(x)$. Hence combining the inequalities \ref{inequality: MainTheorem1} and \ref{inequality: MainTheorem2} we obtain
        \begin{align}
            \int_{\mathbb{B}_\delta^{\mathbb{e}}(x)}\d\lambda_{\mathbb{e}}(\sigma)\rho_{\mathbb{e}}(\sigma,y)&\geq \lambda_{\mathbb{e}}(\mathbb{B}^{\mathbb{e}}_{\delta}(x))\ell +\frac{1}{4}\delta\left(2\lambda_{\mathbb{e}}(\mathbb{O}_\delta^{\mathbb{e}}(x,y)\backslash\mathbb{O}_{\delta/2}^{\mathbb{e}}(x,y))+\lambda_{\mathbb{e}}(\mathbb{O}_{\delta/2}^{\mathbb{e}}(z_0,y)\backslash\mathbb{O}_{\delta/4}^{\mathbb{e}}(z_0,y))\right)\nonumber\\
            &\qquad -\frac{1}{2}\delta (\lambda_{\mathbb{e}}(\mathbb{B}^{\mathbb{e}}_{\delta/2}(z_0)\backslash \mathbb{B}^{\mathbb{e}}_{\delta/4}(z_{1}))+2 \lambda_{\mathbb{e}}(\mathbb{B}^{\mathbb{e}}_{\delta/4}(z_{1})))\nonumber.
        \end{align}
        Let $\alpha(\delta)$ and $f(\delta)$ be defined as in fact \ref{fact: ElementaryFact}. Recognising from proposition \ref{proposition: RerootingInvarianceVolume} and corollary \ref{corollary: Volumes} that
        \begin{align}
            \alpha(\delta)=\mathbb{E}(\lambda_{\mathbb{e}}(\mathbb{B}^{\mathbb{e}}_{\delta}(u)))=2\mathbb{E}(\lambda_{\mathbb{e}}(\mathbb{O}^{\mathbb{e}}_{\delta}(u,v))),\nonumber
        \end{align}
        we see that
        \begin{align}
            g(\delta)&\coloneqq \frac{1}{4}\mathbb{E}\left(\frac{2\lambda_{\mathbb{e}}(\mathbb{O}_\delta^{\mathbb{e}}(x,y)\backslash\mathbb{O}_{\delta/2}^{\mathbb{e}}(x,y))+\lambda_{\mathbb{e}}(\mathbb{O}_{\delta/2}^{\mathbb{e}}(z_0,y)\backslash\mathbb{O}_{\delta/4}^{\mathbb{e}}(z_0,y))-2\lambda_{\mathbb{e}}(\mathbb{B}^{\mathbb{e}}_{\delta/2}(z_0)\backslash \mathbb{B}^{\mathbb{e}}_{\delta/4}(z_{1}))-4 \lambda_{\mathbb{e}}(\mathbb{B}^{\mathbb{e}}_{\delta/4}(z_{1}))}{\lambda_{\mathbb{e}}(\mathbb{B}^{\mathbb{e}}_{\delta}(x))}\right)\nonumber\\
            &=\frac{1}{8}\frac{2\alpha(\delta)-2\alpha(\delta/2)+\alpha(\delta/2)-\alpha(\delta/4)-4\alpha(\delta/2)+4\alpha(\delta/4)-8\alpha(\delta/4)}{\alpha(\delta)}\nonumber\\
            &=\frac{1}{4}-\frac{\alpha(\delta/2)+9\alpha(\delta/4)}{8\alpha(\delta)}\nonumber\\
            &=f(\delta)\nonumber
        \end{align}
        and
        \begin{align}
            \mathbb{E}(\mathcal{K}_{\mathbb{e}}^{\delta,x,y}(f_{x,y}^{\mathbb{e}}))\geq \ell+\delta f(\delta)\nonumber.
        \end{align}
        Our claim thus relies on the behaviour of $f(\delta)$ for $\delta\downarrow 0$. In particular, for every $\varepsilon>0$ we have by fact \ref{fact: ElementaryFact} that 
        \begin{align}
            \mathbb{E}(\mathcal{K}_{\mathbb{e}}^{\delta,x,y}(f_{x,y}^{\mathbb{e}}))\geq \ell+\delta f(\delta)>\ell+\left(\frac{19}{128}-\varepsilon\right)\delta \nonumber
        \end{align}
        for all sufficiently small $\delta \in (0,\infty)$ as required.
    \end{proof}
    \begin{corollary}\label{corollary: OllCurvBound}
        Let $(\mathcal{T}_{\mathbb{e}},\rho_{\mathbb{e}})$ be the Brownian continuum random tree. For each $x\in \mathcal{T}_{\mathbb{e}}$ let $\gamma_x:[0,T]\rightarrow \mathcal{T}_{\mathbb{e}}$ denote a length-minimising geodesic in $\mathcal{T}_{\mathbb{e}}$ such that $\gamma_x(0)=x$. Then for $\lambda_{\mathbb{e}}$-almost all $x\in \mathcal{T}_{\mathbb{e}}$ and for every $\varepsilon>0$ we have
        \begin{align}
            -2\frac{\delta}{\ell}\leq \kappa^{\delta,\ell}_x(\gamma)<-\left(\frac{19}{128}-\varepsilon\right)\frac{\delta}{\ell}
        \end{align}
        for almost all $\ell\in (0,\infty)$ and for all sufficiently small $\delta>0$. In particular the scale-free Ollivier curvature $\kappa_x(\gamma)=-\infty$ for $\lambda_{\mathbb{e}}$-almost all $x\in \mathcal{T}_{\mathbb{e}}$ and $\mathcal{T}_{\mathbb{e}}$ as expected. $\qed$
    \end{corollary}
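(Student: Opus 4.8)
The plan is to read the corollary off theorem \ref{theorem: MainTheorem} by unwinding the definition of the Ollivier curvature, with all the genuine work going into matching the measure-theoretic quantifiers. First I would fix a generic $x$ and a unit-speed geodesic $\gamma=\gamma_x$ with $\gamma(0)=x$, and put $y=\gamma(\ell)$; because $\gamma$ is length-minimising and parametrised by arc length, $\rho_{\mathbb{e}}(x,y)=\ell$, so that
\[
\kappa^{\delta,\ell}_x(\gamma)=1-\frac{\mathcal{W}_{\mathbb{e}}(\mu^\delta_x,\mu^\delta_{\gamma(\ell)})}{\ell}
\]
is exactly the quantity controlled by theorem \ref{theorem: MainTheorem}.

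The algebraic core is then immediate. Substituting the two-sided estimate $\ell+(\tfrac{19}{128}-\varepsilon)\delta<\mathcal{W}_{\mathbb{e}}(\mu^\delta_x,\mu^\delta_y)\le\ell+2\delta$ of theorem \ref{theorem: MainTheorem} (with $\rho_{\mathbb{e}}(x,y)=\ell$), dividing through by $\ell$, and using that $t\mapsto 1-t/\ell$ reverses inequalities, I obtain
\[
-\,2\frac{\delta}{\ell}\;\le\;\kappa^{\delta,\ell}_x(\gamma)\;<\;-\Big(\frac{19}{128}-\varepsilon\Big)\frac{\delta}{\ell},
\]
valid for all $\delta$ small enough that $\delta<\ell$ (as required by lemma \ref{lemma: BallDecomposition}). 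Dividing once more by $\delta^2$ gives $-2/(\delta\ell)\le\delta^{-2}\kappa^{\delta,\ell}_x(\gamma)<-(\tfrac{19}{128}-\varepsilon)/(\delta\ell)$; in the regime $\ell>\delta$ both sides diverge to $-\infty$ as $\delta,\ell\to 0$, so the scale-free curvature is squeezed to $\kappa_x(\gamma)=-\infty$. None of this is an obstacle.

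The hard part will be the quantifiers: theorem \ref{theorem: MainTheorem} is stated for $\lambda_{\mathbb{e}}\times\lambda_{\mathbb{e}}$-almost every pair $(x,y)$, whereas the corollary fixes $x$ and lets $y=\gamma(\ell)$ sweep out a single geodesic, which is a $\lambda_{\mathbb{e}}$-null subset of $\mathcal{T}_{\mathbb{e}}$; the conull set of \emph{good} endpoints supplied by Fubini need not meet $\gamma$ at all. I would resolve this by tracing exactly where the generic-pair hypothesis is used in the proof of theorem \ref{theorem: MainTheorem}, namely only in the identities of lemma \ref{lemma: FundamentalLemma}, and splitting them into two types. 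The identities attached to the fixed centre $x$ (parts (ii)--(iv) applied to $\mathbb{O}^{\mathbb{e}}_\varepsilon(x,y)$, $\mathbb{A}^{\mathbb{e}}_\varepsilon(x,y)$, together with the volume bookkeeping of corollary \ref{corollary: Volumes}) involve $y$ only through the connected component of $\mathcal{T}_{\mathbb{e}}\setminus\{x\}$ into which $\gamma$ departs; since $\gamma((0,T])$ is connected and misses $x$, this component---and hence these sets---is independent of $\ell$, so choosing $\gamma$ to head towards a $\lambda_{\mathbb{e}}$-random leaf $Y$ makes the corresponding identities hold simultaneously for every $\gamma(\ell)$, $\ell\in(0,\rho_{\mathbb{e}}(x,Y))$.

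The remaining identity is the one that kills the moving-centre term $\lambda_{\mathbb{e}}(\mathbb{B}^{\mathbb{e}}_\delta(y))^{-1}\big(\int_{\mathbb{O}^{\mathbb{e}}_\delta(y,x)}\rho_{\mathbb{e}}(\tau,y)-\int_{\mathbb{A}^{\mathbb{e}}_\delta(y,x)}\rho_{\mathbb{e}}(\tau,y)\big)$ coming from the test function $f^{\mathbb{e}}_{x,y}$ of proposition \ref{proposition: LipschitzFunction}; here the ball is centred at the moving point $y=\gamma(\ell)$, so the direction-constancy trick does not apply. I would handle this by rerooting to $x$, so that $\gamma(\ell)$ becomes a point at height $\ell$, and then running the excursion-gluing / strong-Markov argument already used in the proof of lemma \ref{lemma: FundamentalLemma}(iv) at the deterministic level $\ell$: the decomposition of the rerooted excursion at level $\ell$ into its ancestral part and the independent offspring excursions above $\ell$ (fact \ref{fact: Branching}) makes the offspring and ancestral balls at $\gamma(\ell)$ identically distributed, forcing the moving-centre term to vanish in expectation. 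This decomposition is valid for Lebesgue-almost every level $\ell$---it can fail only when $\gamma(\ell)$ is a branch point or a degenerate local extremum of the excursion, a null set of levels---and this is precisely the source of the \emph{almost all $\ell$} in the statement. I expect verifying that this level-$\ell$ decomposition genuinely reproduces the needed instance of lemma \ref{lemma: FundamentalLemma}(iv), uniformly along the geodesic and for a full-measure set of levels, to be the main technical obstacle; the Fubini step covering \emph{$\lambda_{\mathbb{e}}$-almost all $x$} is then routine.
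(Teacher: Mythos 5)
Your first two paragraphs are exactly the paper's (implicit) argument: the corollary is stated with $\qed$ and is meant to follow immediately by substituting the two-sided Wasserstein bound of theorem \ref{theorem: MainTheorem} into $\kappa^{\delta,\ell}_x(\gamma)=1-\mathcal{W}_{\mathbb{e}}(\mu^\delta_x,\mu^\delta_{\gamma(\ell)})/\ell$ with $y=\gamma(\ell)$, so your algebraic core is correct and is the same route. Your remaining two paragraphs address a quantifier mismatch (the theorem holds for $\lambda_{\mathbb{e}}\times\lambda_{\mathbb{e}}$-a.e.\ pair while the corollary sweeps $y$ along a $\lambda_{\mathbb{e}}$-null geodesic) that the paper silently elides behind the phrase ``for almost all $\ell$''; this is a genuine subtlety, and your proposed resolution via rerooting and the level-$\ell$ branching decomposition is a reasonable way to close it, but be aware you are supplying an argument the paper does not give rather than reproducing one it does.
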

    \begin{remarks}
        The upper bound obtained for the Wasserstein distance above is in some sense the worst possible nontrivial bound since it is a uniform upper bound on the transport cost for all transport plans. No doubt it could be improved by looking more closely at the properties of particular transport plans. Similarly, we could also attempt to improve the lower bound on the Wasserstein distance e.g. by iterating the application of lemma \ref{lemma: RecursiveBound} further. There seems little advantage in doing this since the second term on the right-hand side of the inequality in lemma \ref{lemma: RecursiveBound} is obtained in a manner that seems impossible to make optimal. More precisely we have been rather cavalier in our estimate of the quantity
        \begin{align}
            \int_{\mathbb{O}_\delta^{\mathbb{e}}(x,y)}\d\lambda_{\mathbb{e}}(\sigma)\rho_{\mathbb{e}}(\sigma,x)\nonumber
        \end{align}
        which introduces a source of error independently of any error associated with the estimate of the third term 
        \begin{align}
            \int_{\mathbb{B}^{\mathbb{e}}_{\delta/2}(z_{\delta})}\d\lambda_{\mathbb{e}}(\sigma)\rho_{\mathbb{e}}(\sigma,y)\nonumber,
        \end{align}
        which can be reduced through iteration. Indeed this is a vital part of the proof: if we apply lemma \ref{lemma: RecursiveBound} only once and use the same procedure as in the proof to estimate this final integral term, the sign of the $\delta$ component is negative and the corresponding bound becomes trivial when considering the Ollivier curvature. Independently of these considerations the present bounds are sufficient to provide us with upper and lower bounds on the Ollivier curvature which differ only up to a choice of constant. The Ollivier curvature itself can converge to nontrivial finite negative values in the limits $\delta,\:\ell\rightarrow 0$ if we take e.g. $\delta=\ell/2$. However it is clear that the scale-free Ollivier curvature must diverge to \textit{negative} infinity as $\delta,\:\ell\rightarrow 0$ since up to constant we have upper and lower bounds given by $-(\delta\ell)^{-1}$. Note that $\ell=c\delta^{-1}$ for some constant $c>0$ is not a valid assignment since as $\delta\rightarrow 0$ this requires $\ell\rightarrow \infty$ while $\mathcal{T}_{\mathbb{e}}$ is compact. 
    \end{remarks}
    
    \appendix  
    \section{A Useful Fact}
    In this section we prove the following fact which we use in the computation of the Ollivier curvature bounds. While the fact is entirely elementary, even the slightly informal proof we present below is somewhat tedious to carry through. In practice one can easily verify the truth of this fact by plotting the requisite function using some suitable software package.
    \begin{fact}\label{fact: ElementaryFact}
        Define the functions
        \begin{align}
            \alpha(\delta)\coloneqq 1-\exp(-2\delta^2) && f(\delta)\coloneqq \frac{1}{4}-\frac{\alpha(\delta/2)+9\alpha(\delta/4)}{8\alpha(\delta)}
        \end{align}
        for $\delta\in \mathbb{R}$. $f(\delta)$ is continuously differentiable everywhere with
        \begin{align}
            f(0)=\frac{19}{128} && f'(0)=0.
        \end{align}
        In particular the turning point at $\delta=0$ is a maximum and for each $\varepsilon>0$ there is a neighbourhood $(-a,a)$ of $0$ in $\mathbb{R}$ such that
        \begin{align}
            \frac{19}{128}-\varepsilon<f(\delta)\leq \frac{19}{128}
        \end{align}
        for all $\delta\in (-a,a)$.
    \end{fact}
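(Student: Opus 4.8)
The plan is to eliminate the apparent singularity of $f$ at the origin by factoring out the double zero of $\alpha$, and then to read off all the required data from a short Taylor expansion. First I would observe that $\alpha(\delta)=1-\exp(-2\delta^2)$ is an even, real-analytic function that vanishes to order exactly two at $\delta=0$ (indeed $\alpha(0)=\alpha'(0)=0$ while $\alpha''(0)=4$) and is strictly positive for $\delta\neq 0$. Consequently the function $\phi(\delta)\coloneqq \alpha(\delta)/\delta^2$, with $\phi(0)\coloneqq 2$, extends to an even, real-analytic, strictly positive function on all of $\mathbb{R}$. Writing $\alpha(\delta/2)=(\delta^2/4)\phi(\delta/2)$, $\alpha(\delta/4)=(\delta^2/16)\phi(\delta/4)$ and $\alpha(\delta)=\delta^2\phi(\delta)$, the common factors of $\delta^2$ cancel and one obtains
\begin{align}
    f(\delta)=\frac{1}{4}-\frac{4\phi(\delta/2)+9\phi(\delta/4)}{128\,\phi(\delta)}.\nonumber
\end{align}
Since $\phi$ is smooth and nowhere vanishing, the right-hand side is smooth everywhere; in particular $f$ is $C^1$ (indeed $C^\infty$) on $\mathbb{R}$, which settles the differentiability claim.

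Next I would evaluate the elementary data at the origin. Substituting $\phi(0)=2$ into the displayed formula gives $f(0)=\frac14-\frac{4\cdot 2+9\cdot 2}{128\cdot 2}=\frac14-\frac{13}{128}=\frac{19}{128}$. For the first derivative I would avoid any computation and simply note that $f$ is even: each of $\phi(\delta)$, $\phi(\delta/2)$, $\phi(\delta/4)$ is even, so $f(-\delta)=f(\delta)$, and an even $C^1$ function necessarily satisfies $f'(0)=0$.

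Finally, for the claim that $\delta=0$ is a maximum, together with the two-sided bound, I would compute the second-order term of $f$. Using $\alpha(\delta)=2\delta^2-2\delta^4+O(\delta^6)$, equivalently $\phi(\delta)=2-2\delta^2+O(\delta^4)$, a direct expansion of the quotient yields
\begin{align}
    f(\delta)=\frac{19}{128}-\frac{183}{2048}\,\delta^2+O(\delta^4),\nonumber
\end{align}
so that $f''(0)=-\tfrac{183}{1024}<0$ and $\delta=0$ is a strict local maximum; hence there is a neighbourhood on which $f(\delta)\le \frac{19}{128}$, with equality only at $0$. The lower bound is then purely a matter of continuity: given $\varepsilon>0$, continuity of $f$ at $0$ provides a neighbourhood on which $f(\delta)>f(0)-\varepsilon=\frac{19}{128}-\varepsilon$, and intersecting this with the neighbourhood furnished by the local maximum yields an interval $(-a,a)$ on which $\frac{19}{128}-\varepsilon<f(\delta)\le \frac{19}{128}$.

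The only genuine work is pinning down the sign of the quadratic coefficient in the last display: the factorisation through $\phi$ reduces the problem to bookkeeping, but one must carry the $O(\delta^2)$ terms of $\phi(\delta/2)$, $\phi(\delta/4)$ and $\phi(\delta)$ through the quotient correctly to confirm that the coefficient $-\frac{183}{2048}$ is indeed negative. I expect this Taylor-coefficient computation, rather than any conceptual point, to be the main (if modest) obstacle.
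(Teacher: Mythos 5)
Your proposal is correct, and every number checks out: the cancellation $\alpha(a\delta)=(a\delta)^2\phi(a\delta)$ gives $f(\delta)=\frac14-\frac{4\phi(\delta/2)+9\phi(\delta/4)}{128\,\phi(\delta)}$ with $\phi$ entire, even and nowhere vanishing; $f(0)=\frac14-\frac{26}{256}=\frac{19}{128}$; and expanding $\phi(\delta)=2-2\delta^2+O(\delta^4)$ through the quotient does yield $f(\delta)=\frac{19}{128}-\frac{183}{2048}\delta^2+O(\delta^4)$, which is consistent with the paper's own expansion $f'(\delta)=\delta(A-B)$ with $A-B=-\frac{25}{1024}-\frac{79}{512}+O(\delta^2)=-\frac{183}{1024}+O(\delta^2)$.

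Your route is genuinely different from, and cleaner than, the paper's. The paper treats $\delta=0$ as a removable singularity to be handled by repeated applications of L'H\^{o}pital's rule: one application to get $f(0)$, a second (considerably more laborious) round to get $f'(0)$, with the three resulting limits organised into terms $A$, $B$, $C$ and evaluated by ad hoc series manipulations; the maximality of $\delta=0$ is then established by a separate expansion of the ratio $A/B$ in the factorisation $f'(\delta)=\delta(A-B)$. Your factorisation through $\phi(\delta)=\alpha(\delta)/\delta^2$ removes the singularity once and for all, so smoothness of $f$ and the value $f(0)$ become immediate, $f'(0)=0$ comes for free from evenness (the paper invokes evenness only to argue the critical point is a max or a min, after having computed $f'(0)$ the hard way), and the sign of the single Taylor coefficient $-\frac{183}{2048}$ replaces the paper's $A/B<1$ argument. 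What the paper's approach buys is essentially nothing beyond self-containedness at the level of L'H\^{o}pital; what yours buys is a substantially shorter and less error-prone computation, and as a bonus the explicit value $f''(0)=-\frac{183}{1024}$, which quantifies how fast $f$ falls away from its maximum. The one place where care is genuinely needed in your version is exactly where you flag it: carrying the $O(\delta^2)$ terms of $\phi(\delta/2)$, $\phi(\delta/4)$ and $\phi(\delta)$ correctly through the quotient (numerator $26-\frac{25}{8}\delta^2$, denominator $256(1-\delta^2)$), and that bookkeeping is right.
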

    \begin{proof}
        $\alpha(\delta)$ takes values in $[0,1)$ and vanishes uniquely at $\delta=0$; also it is manifestly smooth for all $\delta$. Thus we see that $f(\delta)$ is also manifestly smooth for all points in $\mathbb{R}$, with the possible exception of $\delta=0$. Noting that 
        \begin{align}
            \alpha'(a\delta)=4a^2\delta \exp(-2a^2\delta^2)=4a^2\delta(1-\alpha(a\delta))\nonumber
        \end{align}
        for all $a\in \mathbb{R}$ we can use the L'H\^{o}pital rule to evaluate
        \begin{align}
            f(0)=\frac{1}{4}-\lim_{\delta\rightarrow 0}\frac{\alpha'(\delta/2)+9\alpha'(\delta/4)}{8\alpha'(\delta)}=\frac{1}{4}-\lim_{\delta\rightarrow 0}\frac{4 \exp\left(-\frac{1}{2}\delta^2\right)+9\exp\left(-\frac{1}{8}\delta^2\right)}{128 \exp(-2\delta^2)}=\frac{19}{128}\nonumber.
        \end{align}
        Thus by continuity there is a neighbourhood of $0$ in which $f(\delta)>0$ for all values of $\delta$ in that neighbourhood. In fact $\delta=0$ is a local maximum. To see this first note that
        \begin{align}\label{equation: FactDerivativeSpecialFunction}
            f'(\delta)&=-\frac{8\alpha(\delta)(\alpha'(\delta/2)+9\alpha'(\delta/4))-8(\alpha(\delta/2)+9\alpha(\delta/4))\alpha'(\delta)}{64\alpha(\delta)^2}\nonumber\\
            &=\frac{\delta \exp(-2\delta^2)(\alpha(\delta/2)+9\alpha(\delta/4))}{2\alpha(\delta)^2}-\frac{\delta\left(4 \exp\left(-\frac{1}{2}\delta^2\right)+9\exp\left(-\frac{1}{8}\delta^2\right)\right)}{32\alpha (\delta)}.
        \end{align}
        Again this is manifestly smooth everywhere except perhaps at $\delta=0$ and we may use the L'H\^{o}pital rule to find
        \begin{align}
            f'(0)&=\lim_{\delta\rightarrow 0}\left(\frac{(1-4\delta^2)\exp(-2\delta^2)(\alpha(\delta/2)+9\alpha(\delta/4))+\delta\exp(-2\delta^2)(\alpha'(\delta/2)+9\alpha'(\delta/4))}{4\alpha(\delta)\alpha'(\delta)}\nonumber\right.\\
            &\qquad \left.-\frac{4\left(4 \exp\left(-\frac{1}{2}\delta^2\right)+9\exp\left(-\frac{1}{8}\delta^2\right)\right)-\delta^2\left(16 \exp\left(-\frac{1}{2}\delta^2\right)+9\exp\left(-\frac{1}{8}\delta^2\right)\right)}{128\alpha'(\delta)}\right)\nonumber\\
            &=\lim_{\delta\rightarrow 0}\frac{\delta\left(16 \exp\left(-\frac{1}{2}\delta^2\right)+9\exp\left(-\frac{1}{8}\delta^2\right)\right)}{512\exp(-2\delta^2)}-2\lim_{\delta\rightarrow 0}\frac{\delta (\alpha (\delta/2)+9\alpha(\delta/4))}{8\alpha(\delta)}\nonumber\\
            &\qquad +\lim_{\delta\rightarrow 0}\left(\frac{\delta\left(4\exp\left(-\frac{1}{2}\delta^2\right)+9\exp\left(-\frac{1}{8}\delta^2\right)\right)}{128\alpha(\delta)}+\frac{\alpha(\delta/2)+9\alpha(\delta/4)}{16\delta \alpha(\delta)}-\frac{4 \exp\left(-\frac{1}{2}\delta^2\right)+9\exp\left(-\frac{1}{8}\delta^2\right)}{128\delta \exp(-2\delta^2)}\right)\nonumber.
        \end{align}
        Let us denote the three limits above by $A$, $B$ and $C$ respectively. $A$ vanishes trivially; $B$ also vanishes if we notice that the quantity inside the limit is a product of two quantities ($\delta$ and $(\alpha(\delta/2)+9\alpha(\delta/4)/8\alpha(\delta))$) which have finite limits as $\delta\rightarrow 0$ (in fact $0$ and $13/128$ as shown above respectively). Thus we have
        \begin{align}
            f'(0)&=\lim_{\delta\rightarrow 0}\left(\frac{\delta\left(4\exp\left(-\frac{1}{2}\delta^2\right)+9\exp\left(-\frac{1}{8}\delta^2\right)\right)}{128\alpha(\delta)}+\frac{\alpha(\delta/2)+9\alpha(\delta/4)}{16\delta \alpha(\delta)}-\frac{4 \exp\left(-\frac{1}{2}\delta^2\right)+9\exp\left(-\frac{1}{8}\delta^2\right)}{128\delta \exp(-2\delta^2)}\right)\nonumber
        \end{align}
        Let us again denote the first, second and third terms inside the limits by $A$, $B$ and $C$ respectively. To evaluate the sum of these quantities in the limit $\delta\rightarrow 0$ it is helpful to use the approximations
        \begin{align}
            \exp(-2(a\delta)^2)=1+\mathcal{O}(\delta^2) && \alpha (a\delta)=2a^2\delta^2(1+\mathcal{O}(\delta^2))\nonumber
        \end{align}
        where $\mathcal{O}(\delta^n)$ indicates that the unwritten component is a convergent series with each term containing a factor $\delta^m$ for $m\geq n$; in particular we note that the limit of the series can be made arbitrarily close to $0$ as $\delta\rightarrow 0$. Thus we find that
        \begin{align}
            A=\frac{4\left(1+\mathcal{O}(\delta^2)\right)+9(1+\mathcal{O}(\delta^2))}{256 \delta}(1+\mathcal{O}(\delta^2))^{-1}=\frac{13}{256\delta}+\mathcal{O}(\delta).\nonumber
        \end{align}
        Similarly,
        \begin{align}
            B=\frac{4\delta^2(1+\mathcal{O}(\delta^2))+9\delta^2(1+\mathcal{O}(\delta^2))}{256\delta^3}
            (1+\mathcal{O}(\delta^2))^{-1}=\frac{13}{256 \delta}+\mathcal{O}(\delta)\nonumber.
        \end{align}
        Finally
        \begin{align}
            C=-\frac{4(1+\mathcal{O}(\delta^2))+9(1+\mathcal{O}(\delta^2))}{128\delta}(1+\mathcal{O}(\delta^2))^{-1}=-\frac{13}{128\delta}+\mathcal{O}(\delta)\nonumber.
        \end{align}
        Hence
        \begin{align}
            f'(0)=\lim_{\delta\rightarrow 0}\mathcal{O}(\delta)=0\nonumber
        \end{align}
        as required. It only remains to prove that the turning point at $\delta=0$ is a maximum since the rest of the statement follows immediately if this holds. Noting that $\exp(-a\delta^2)$ and $\alpha(\delta)$ are even functions we see that the turning point at $\delta=0$ is necessarily a maximum or a minimum. The former case suggests that $f'(\delta)<0$ for sufficiently small positive $\delta>0$. Define
        \begin{align}
            A\coloneqq \frac{(1-\alpha(\delta))(\alpha(\delta/2)+9\alpha(\delta/4))}{2\alpha(\delta)^2} && B\coloneqq \frac{4(1-\alpha(\delta/2))+9(1-\alpha(\delta/8))}{32\alpha(\delta)}\nonumber
        \end{align}
        Clearly $A,\:B>0$ for $\delta>0$. Also comparison with equation \ref{equation: FactDerivativeSpecialFunction} indicates that $f'(\delta)=\delta(A-B)$ so for $\delta>0$, $f'(\delta)<0$ iff $A<B$ iff $A/B<1$. But in particular
        \begin{align}
            \frac{A}{B}=\frac{16(1-\alpha(\delta))(\alpha(\delta/2)+9\alpha(\delta/4))}{\alpha(\delta)(4(1-\alpha(\delta/2))+9(1-\alpha(\delta/8)))}\nonumber.
        \end{align}
        Using the approximation 
        \begin{align}
            \alpha(\delta)=2\delta^2-2\delta^4+\mathcal{O}(\delta^6)\nonumber
        \end{align}
        we find that
        \begin{align}
            \frac{A}{B}&=\frac{16(1-2\delta^2+2\delta^4+\mathcal{O}(\delta^6))\left(16(4\delta^2-\delta^4+\mathcal{O}(\delta^6))+9(16\delta^2-\delta^4+\mathcal{O}(\delta^6))\right)}{(2\delta^2-2\delta^4+\mathcal{O}(\delta^6))(64(8-4\delta^2+\delta^4+\mathcal{O}(\delta^6))+9(128-16\delta^2+\delta^4+\mathcal{O}(\delta^6)))}\nonumber\\
            &=\frac{3328\delta^2-7056\delta^4+\mathcal{O}(\delta^6)}{3328\delta^2-2528\delta^4+\mathcal{O}(\delta^6)}\nonumber\\
            &=\left(1-\frac{441}{208}\delta^2+\mathcal{O}(\delta^4)\right)\left(1-\frac{79}{104}\delta^2+\mathcal{O}(\delta^4)\right)^{-1}\nonumber\\
            &=1-\frac{283}{208}\delta^2+\mathcal{O}(\delta^4)\nonumber
        \end{align}
        which is smaller than one as required for sufficiently small $\delta$. 
    \end{proof}
    \printbibliography
\end{document}